\newcommand{\SQF} {David-Fuchsian b-group}
\newcommand{\Bel}{\mathrm{Bel}}
\newcommand{\Bers}{\mathrm{Bers}}
\newcommand{\bel}{\mathrm{bel}}
\newcommand{\Cl}{\mathrm{Cl}}
\newcommand{\Cyl}{\operatorname{Cyl}}
\newcommand{\Hol}{\mathrm{Hol}}
\newcommand{\id} {\operatorname{id}}
\newcommand{\IM}{\operatorname{Im}}
\newcommand{\short}{\mathrm{short}}
\newcommand{\Leb}{\mathrm{Leb}}
\newcommand{\length}{\mathrm{length}}
\newcommand{\loc}{\mathrm{loc}}
\newcommand{\supp}{\mathrm{supp} }
\newcommand{\St}{\operatorname{St}}
\newcommand{\Teich}{\operatorname{Teich}}
\newcommand{\dt}{\ dt}
\newcommand{\dzeta}{\ d \zeta}
\newcommand{\dr}{dr}
\theoremstyle{theorem}
\newtheorem{theo}{Theorem}[section]
\newtheorem*{copytheo}{Theorem}
\newtheorem{prop}[theo]{Proposition}
\newtheorem{lem}[theo]{Lemma}
\newtheorem{claim}{Claim}
\theoremstyle{definition}
\newtheorem{de}[theo]{Definition}
\theoremstyle{remark}
\newtheorem*{Rem}{Remark}
\newtheorem*{exa}{Eg.}
\numberwithin{equation}{section}
\begin{document}

\title{New degeneration phenomenon for infinite-type Riemann surfaces}

\author{Ryo Matsuda}
\address{Department of Mathematics, Faculty of Science, Kyoto University, Kyoto 606-8502, Japan}
\email{matsuda.ryou.82c@st.kyoto-u.ac.jp}
\thanks{The author was supported in part by JST, the establishment of the University Fellowship Program for the Creation of Innovation in Science and Technology, Grant Number JPMJFS2123 }

\subjclass[2020]{Primary 30F60; Secondary 30F45}

\date{September 15, 2024}

\keywords{Quasiconformal mapping, Teichm\"uller theory, Bers embedding}

\newcommand{\Addresses}{{
  \bigskip
  \footnotesize

  Ryo Matsuda, \par\nopagebreak
  \textsc{Department of Mathematics, Faculty of Science, Kyoto University, Kyoto 606-8502, Japan}\par\nopagebreak
  \textit{E-mail} : \texttt{matsuda.ryou.82c@st.kyoto-u.ac.jp}
}}


\maketitle


\begin{abstract}
Since the Teichm\"uller space of a surface $R$ is a deformation space of complex structures defined on $R$, its Bers boundary describes the degeneration of complex structures in a certain sense. 
In this paper, constructing a concrete example, we prove that if S is a Riemann surface of infinite type, a Riemann surface with the marking exists, which is homeomorphic to the surface $R$ in the Bers boundary. We also show that many such degenerations exist in the Bers boundary. 
\end{abstract}

\section{Introduction}

The Teichm\"uller space $\Teich(\Gamma)$ of a Fuchsian group $\Gamma$ is the space of quasiconformal deformations of $\Gamma$. 
In terms of the corresponding Riemann surface $R: = \mathbb H / \Gamma$, the space $\Teich(\Gamma)$ can be regarded as the deformation space of the Riemann surface $R$; it is sometimes denoted by $\Teich(R)$. 
Furthermore, $\Teich(R)$ admits a structure as a complex manifold, which reflects a lot of the properties of the Riemann surface $R$. 

As a common technique in complex analysis, one may examine the complex manifold structure of $\Teich(R)$ by understanding properties of proper holomorphic maps from the complex disk $\mathbb{D}:= \{ z \in \mathbb{C} \mid |z| < 1 \}$ to $\Teich(R)$. On the other hand, a holomorphic function from $\mathbb{D}$ to a complex manifold is characterized by the mapping induced by non-tangential limits at almost all points of the boundary $\partial \mathbb D$ according to the Fatou--Riesz theorem. 
Because the non-tangential limits are contained in the boundary of $\Teich(\Gamma)$, understanding the boundary of $\Teich(R)$ is for understanding its complex manifold structure. 
The boundary of $\Teich(R)$ describes the degeneration of the complex structures of Riemann surfaces, which, in turn, contributes to the understanding of Riemann surfaces.

Various boundaries for $\Teich(R)$ have been considered. 
We focus on the Bers boundary $\partial_{\mathrm{Bers}} \Teich ( \Gamma )$, which is obtained by embedding $\Teich(R)$ into a
bounded region of the complex Banach space of all holomorphic quadratic forms. We will denote it $B(\Gamma)$(cf. Subsection\ref{pre: TeichKlein}). 
The embedding is called the Bers embedding $\mathcal B: \Teich(\Gamma) \to B(\Gamma)$. 
The Teichm\"uller space so embedded can be seen as the space of deformations of the Fuchsian group $\Gamma$ within the category of Kleinian groups. 
When $R$ is of finite type, it has been extensively studied in relation to three-dimensional hyperbolic geometry in great detail in Bers \cite{B}, Maskit \cite{Mas}, Abikoff \cite{Abi}, and others. 
When $R$ is of finite type, the structure of the Bers boundary has been well investigated; the Bers boundary consists of Kleinian groups corresponding to Riemann surfaces that degenerate topologically\cite{B, Mas} if $R$ is compact.
When $R$ has finitely many punctures, the same can be established by modifying the claim appropriately. This implies that the degeneration of complex structures leads to the degeneration of the topological structure. 
The theorem of Bers--Maskit tells us that the degeneration of complex structures induces topological degeneration. Although topological structure is more fundamental than complex structure, breaking the complex structure breaks the base topological structure.
It is known that almost the same holds for the case where the Riemann surfaces have finitely many punctures (more details will be given in Section\ref{pre: TeichKlein}). 
From these structures of the Bers boundary, the study by Tanigawa and others has elucidated the structure of $\Hol(\mathbb{D}, \Teich(R))$, which helps in understanding the complex manifold structure of $\Teich(R)$(Tanigawa \cite{Th, Th2}, Shiga \cite{Shi1, Shi2}). 

This paper considers the case when $R$ is of infinite type. In this case, $\Teich(R)$ becomes an infinite-dimensional complex manifold. We describe a specific construction method to illustrate degeneration phenomena where the complex structure degenerates but topologically remains homeomorphic to $R$.
In this paper, we shall call the Kleinian group corresponding to such a Riemann surface a \SQF.  
A detailed definition is described in Definition \ref{cal.kleiniangroup}. 
In addition, we show that such degeneration phenomena again form infinite-dimensional complex manifolds within the Bers boundary. The quasiconformal maps are characterized as the solution of the partial differential equation called the Beltrami equation $f_{\bar z} = \mu f_z$ considered for a function $\mu \in L^\infty(\mathbb C)$ with $\| \mu \|_{L\infty} < 1$. 
Since the Teichm\"uller space is the set of all quasiconformal deformations of the Riemann surface, a deformation by the solution of the degenerate Beltrami equation is expected to exist on the Bers boundary.
``Degenerate'' means that we consider the case $\| \mu \|_{L\infty} = 1$. David \cite{D} has studied sufficient conditions on $\mu \in L^\infty(\mathbb C)$ for the degenerate Beltrami equation to have a homeomorphic solution; its condition is called David's condition. We construct the \SQF\ by constructing a deformation of the Fuchsian group $\Gamma$ to satisfy David's condition. 
To prove that the constructed David deformation of the Fuchsian group $\Gamma$ exists on the Bers boundary, it is necessary to construct a sequence in $\Teich(\Gamma)$ that converges to the point in $B(\Gamma)$ corresponding the constructed David deformation. To show convergence, we apply the estimate given by McMullen \cite[Theorem 1.2]{McM}; we impose as a condition that  $\short(R)$ is positive, where  $\short(R)$ is the infimum of lengths of essential simple closed geodesics of $R$. 
The main result is as follows:  

\begin{copytheo} [{Theorem \ref{resultball}}] 
Let $R$ be a Riemann Surface of analytically infinite type with $\short (R) > 0$ and infinitely many homotopically independent 
essential closed geodesics $\{\gamma_n^\ast\}_{n \geq 0}$ whose lengths are bounded. Then, there exists an infinite-dimensional complex manifold in $\partial_{\Bers} \Teich (R)$ which consists of \SQF.  That is, there exists a unit ball $D$ of a Banach space and a holomorphic map $F: D \to B(\Gamma)$, where $F(D)$ is contained in the Bers boundary $\partial_{\Bers} \Teich (R)$, contains only \SQF, and the dimension of the tangent space of the image is infinite. 
\end{copytheo}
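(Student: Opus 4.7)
The plan is to construct, for each point $t=(t_n)_{n \geq 0}$ in the unit ball $D$ of $\ell^\infty$, a ``pinching'' Beltrami coefficient $\mu_t$ satisfying David's condition whose deformation pinches the geodesic $\gamma_n^\ast$ with weight proportional to $t_n$, and then to define $F(t)$ to be the Schwarzian derivative of the corresponding David solution restricted to the lower half-plane. Holomorphic dependence on $t$ will come from the fact that $t \mapsto \mu_t$ is linear, once the difficult analytic input (the Bers-boundary control) is in place.

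First I would fix mutually disjoint collar neighborhoods $A_n$ of each $\gamma_n^\ast$; the hypothesis $\short(R) > 0$ together with the uniform upper bound on $\length(\gamma_n^\ast)$ gives, via the collar lemma, a uniform lower bound on the moduli of the $A_n$. On each $A_n$ I would fix a model Beltrami differential $\nu_n$ supported in $A_n$ with $\|\nu_n\|_\infty = 1$ whose pure degenerate solution pinches $\gamma_n^\ast$ to a puncture; the standard cusp-creating radial coefficient on an annulus works, and its David-modulus estimate is a one-annulus calculation that is uniform in $n$ thanks to the uniform moduli. Setting $\mu_t := \sum_n t_n \nu_n$ produces an $\esssup$ at most $1$ with disjoint supports, so David's exponential superlevel estimate for $\mu_t$ reduces to a single-collar estimate and is uniform for $t \in D$. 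By David's theorem there is a homeomorphic solution $f_t:\widehat{\mathbb C} \to \widehat{\mathbb C}$ of $f_{\bar z} = \mu_t f_z$ with the natural normalization, and $f_t \Gamma f_t^{-1}$ is a Kleinian group of the form prescribed by Definition \ref{cal.kleiniangroup}.

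Next, and this is where the real work lies, I would show $F(t)$ lies on $\partial_{\Bers}\Teich(\Gamma)$ by truncating. Let $\mu_t^{(k)} := \bigl(1 - \tfrac1k\bigr) \mu_t$ (or a similar truncation of $|\mu_t|$ below $1$), giving a sequence in $\Teich(\Gamma)$. The Schwarzian of the Bers projection $\mathcal B(\mu_t^{(k)})$ must be controlled in the $\|\cdot\|_{B(\Gamma)}$-norm uniformly in $k$, and the only available tool is McMullen's estimate \cite[Theorem~1.2]{McM}, whose applicability is exactly what requires $\short(R) > 0$. Using this estimate, together with locally uniform convergence $f_{\mu_t^{(k)}} \to f_t$ supplied by the stability of David solutions, one gets $\mathcal B(\mu_t^{(k)}) \to F(t)$ in $B(\Gamma)$, proving $F(t) \in \partial_{\Bers}\Teich(\Gamma)$. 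Holomorphicity of $F$ on $D$ is then inherited from the holomorphic dependence of $\mathcal B(\mu_t^{(k)})$ on $t$ (for fixed $k$) together with local uniform convergence in $k$.

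Finally, for the tangent-space dimension I would compute the derivative at $t=0$. The first-order variation $\partial F/\partial t_n\big|_0$ equals (up to the usual $-6$ factor) the Bers reproducing projection $\beta(\nu_n)$ of the model coefficient $\nu_n$; since the $\nu_n$ are supported in pairwise disjoint collars about homotopically independent geodesics, a pairing against a family of holomorphic quadratic differentials concentrated on the distinct $\gamma_n^\ast$ separates them, yielding linearly independent tangent vectors and hence an infinite-dimensional image of $dF_0$. The main obstacle will be the Bers-norm boundedness in the second paragraph: one must verify that the constants in McMullen's estimate remain uniform when infinitely many disjoint pinchings are activated simultaneously, which is the point at which the hypothesis $\short(R)>0$ is indispensable and any failure of uniformity across the collars $A_n$ would destroy convergence.
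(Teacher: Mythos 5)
There is a fatal gap at the very first step. If $\|\nu_n\|_{\infty}=1$ and $t=(t_n)$ lies in the \emph{open} unit ball of $\ell^\infty$, then $\|\mu_t\|_{\infty}=\sup_n |t_n|\,\|\nu_n\|_{\infty}\leq \|t\|_{\ell^\infty}<1$, because the supports are disjoint. So $\mu_t$ is an ordinary Beltrami coefficient, $f_t$ is quasiconformal, and $F(t)=S(f_t|_{\mathbb H^\ast})$ lies in $\mathcal B(\Teich(\Gamma))$, i.e.\ in the \emph{interior}, not on $\partial_{\Bers}\Teich(R)$. Scaling a degenerate coefficient by a constant of modulus less than one destroys the degeneration: the collar is no longer pinched, the group is an ordinary quasi-Fuchsian group, and no point of your ball is a \SQF. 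This cannot be repaired by moving to the closed ball, since you would lose the open domain needed for holomorphy, and even then only the points with some $|t_n|=1$ would be candidates for the boundary. A second, independent problem is that even for a parametrization that keeps every $\mu_t$ genuinely degenerate, there is no Ahlfors--Bers-type theorem giving holomorphic dependence of the David solution on the coefficient; your appeal to ``linearity of $t\mapsto\mu_t$'' plus locally uniform convergence of truncations would require the McMullen bounds to be uniform in $t$, which you flag but do not supply, and the derivative formula at a degenerate point needs separate justification.

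The paper's construction is structured precisely to avoid both issues. The degenerate part is a single \emph{fixed} coefficient $\mu$, obtained not by placing identical norm-one profiles on every collar but by stretching the $j$-th selected cylinder $\Cyl_{M(j)}$ $j$ times (so the dilatation grows with $j$) and choosing the indices $M(j)$ so sparsely that $\sum_j |\Gamma T_{M(j)}|\,e^{2K_j}<\infty$; this is what yields $\int e^{2K(\tilde\mu)}<\infty$ and hence the principal solution, and it is not reducible to a single-collar estimate. The holomorphic family is then $P^\mu:\Bel(R_1)\ni\tau\mapsto S(f_{\mu+\tau})$, where $R_1$ is the \emph{complement} of the region $R_2$ supporting $\mu$: the variable part $\tau$ is a bounded Beltrami coefficient with support disjoint from that of $\mu$, so holomorphy and the derivative formula follow from the classical quasiconformal theory together with the $W^{1,2}_{\loc}$ regularity of $f_\mu$ (Claim \ref{Bers embedding cal. for david}), each $\mu+\tau$ still satisfies the David condition, and membership in the Bers boundary follows by rerunning the approximation argument with $\tau$ attached. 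Infinite-dimensionality of the image is then not a computation with pinching weights at all, but the statement that the restriction map $Q(R)\to Q(R_1,R)$ is a Banach isomorphism (Proposition \ref{genepudding}), proved by the annulus estimate of Lemma \ref{PuddingsLemma}. If you want to salvage your idea, you must separate the parameters from the pinching: the degeneration has to be frozen, and the ball must act somewhere the coefficient stays bounded away from modulus one.
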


Section \ref{Preliminaries} discusses some necessary results on quasiconformal maps, Kleinian groups, and David maps. In Section \ref {sectiondefcylinderfam}, we compute the variants of the injective radius when the cylinders are stretched to apply them to McMullen's inequality. In Section 4, we construct a Kleinian group using a David map, and we prove that the Kleinian group exists on $\partial_{\mathrm{Bers}} \Teich ( \Gamma )$. That is, we prove that \SQF\ exists. In Section \ref{sectionconstruction}, we consider how many \SQF s exist on $\partial_{\mathrm{Bers}} \Teich ( \Gamma )$. In conclusion, it is shown that there exist open sets $D$ of a certain infinite-dimensional Banach space and an injective holomorphic map $F: D \to B ( \Gamma )$ such that $F(D)$ is contained in the Bers boundary and consists of \SQF.

\subsection*{Acknowlagement.}
	The author would like to thank Professor Mitsuhiro Shishikura for his helpful suggestions, which greatly simplified existing arguments on David's maps. 
He also thanks Professor Hiromi Ohtake and Professor Katsuhiko Matsuzaki for many discussions and helpful advice.
He also appreciates Takuya Murayama and Yota Maeda for providing valuable comments on grammar and style. 
JST, the establishment of the University Fellowship Program for the Creation of Innovation in Science and Technology, Grant Number JPMJFS2123, supported this work.

\section{Preliminaries}\label{Preliminaries}
\subsection{Teichm\"uller theory and Kleinian groups}\label{pre: TeichKlein}

Let $R$ be a Riemann surface whose universal covering surface is the upper half plane $\mathbb H$, 
and let it be represented by a Fuchsian group $\Gamma$ 
acting on $\mathbb H$ as $R = \mathbb H / \Gamma$. 
A quasiconformal mapping $f: \mathbb H \to \mathbb H$ is called compatible with $\Gamma$
if $\Gamma^f: = f \Gamma f^{-1}$ is again a Fuchsian group, 
and the isomorphism $\Gamma \to \Gamma^f$ given by conjugation with 
$f$ is called a quasiconformal deformation of $\Gamma$. 
Note that the deformation of $\Gamma$ induces a quasiconformal deformation of the Riemann surface $R$. 
Indeed, the map $f$ induces a quasiconformal map $\tilde f: \mathbb H/ \Gamma \to \mathbb H / \Gamma'$. 
Denote by $L^\infty ( \Gamma )$ 
the complex Banach space of all bounded measurable \textit{Beltrami differentials} for $\Gamma$ 
supported on $\mathbb H$. Each element $\mu \in L^\infty ( \Gamma )$ satisfies that 
$\mu \circ \gamma \cdot \overline{ \gamma' } = \mu \cdot \gamma'$ for every $\gamma \in \Gamma$. 
Its open unit ball $\Bel ( \Gamma )$ is the space of Beltrami coefficients for $\Gamma$. 
For each element $\mu \in \Bel ( \Gamma )$, the unique quasiconformal homeomorphism 
$w^\mu : \mathbb H \to \mathbb H$ with complex dilatation $\mu$
that fixes $0 , 1$ and $\infty$, is compatible for $\Gamma$. 
Two elements $\mu$ and $\nu$ in $\Bel ( \Gamma )$ are said to be \textit{Teichm\"uller equivalent}
if $w^\mu | _{\mathbb R} = w^\nu| _{\mathbb R}$ holds. 
The \textit{Teichm\"uller space} $\Teich ( \Gamma )$ of $\Gamma$ 
is the quotient space of $\Bel ( \Gamma )$ by the Teichm\"uller equivalence relation. 

In the following, 
we review how to embed $\Teich(\Gamma)$ into a bounded domain of the Banach space $B(\Gamma)$ by a well-known method. Let $\mathbb H^\ast$ be a lower half plane. 
$B(\Gamma)$ is the complex Banach space of all $L^\infty$ hyperbolic holomorphic quadratic forms $\varphi: \mathbb H^\ast \to \hat{\mathbb C }$ 
for $\Gamma$, that is, the point $\varphi$ of $B(\Gamma)$ satisfies $\varphi \circ \gamma \cdot (\gamma)'^2 = \varphi$
for every $\gamma \in \Gamma$, and the $L^\infty$ hyperbolic norm 
$\| \varphi \|_{B(\Gamma)}: = \sup_{z \in \mathbb H^\ast} | \varphi(z) | | \IM z |^2$ is finite.
Like $\Bel(\Gamma)$, $B(\Gamma)$ is sometimes considered a 2-form on the Riemann surface $R$ through the covering map $\pi: \mathbb H \to R$. It is denoted as $B(R)$ in such a sense.
For each $\mu \in \Bel(\Gamma)$, set 
	\[
		\tilde { \mu } =  
		\begin{cases}
		\mu & \text {on} \ \mathbb H, \\
		0 & \text {on} \ \mathbb H^\ast.
		\end{cases}
	\]
Then, there exists an unique quasiconformal mapping $f_\mu: \hat{\mathbb C } \to \hat{\mathbb C }$ such that
the beltrami differential of $f_\mu$ is equal to $\tilde { \mu } $, and 
	\[
		f_\mu (z) = \frac {1} {z + i} + o(1), \ z \to -i. 
	\]
The Schwarzian derivative of $f_\mu$ is defined on $\mathbb{H}^*$ as
	\[
		S(f_\mu |_{\mathbb{H}^*} ) = \left( \frac{f_\mu''} {f_\mu'} \right)' - \frac {1} {2}\left( \frac{f_\mu''} {f_\mu'} \right)^2. 
	\]
The mapping $\mathcal B: \text{Bel}(\Gamma) \ni \mu \mapsto S(f_\mu |_{\mathbb{H}^*} ) \in B(\Gamma)$ is well-defined and compatible with Teichm\"uller equivalence. In other words, it establishes a holomorphic map from $\text{Teich}(\Gamma)$ to $B(\Gamma)$. The map is a biholomorphic map onto its image, called the \textit{Bers embedding}. 
As a known fact, its image is a bounded domain in $B(\Gamma)$. 
Therefore, the boundary can be defined via the Bers embedding. 
This is called the \textit{Bers boundary} $\partial_{\text{Bers}} \Teich(\Gamma) : =  \partial \mathcal B ( \Teich(\Gamma) )$. 
Next, we summarise the holonomy homomorphism defined at each point in $B(\Gamma)$. 
For each a point $\varphi \in B(\Gamma)$, 
then there exists a unique, local univalent holomorphic map $W_\varphi: \mathbb H ^\ast \to \hat{\mathbb C}$ such that 
	\[
		S(W_\varphi ) = \varphi, \ \ \ W_\varphi(z) = \frac {1} {z + i} + o(1), \text{near} \ z = -i. 
	\] 
Then, to every an element $\gamma \in \Gamma$, there exists a unique M\"obius transformation $g$ such that 
	\[
		W_\varphi \circ \gamma = g \circ W_\varphi. 
	\]
We denote the mapping $\gamma \mapsto g$ by $\chi_\varphi$, called a monodromy isomorphism. If $W_\varphi$ 
is an univalent, $\chi_\varphi(\Gamma)$ is also discrete. From Nehari's theorem (\cite[\S VI Lemma 3]{QC}),  
if $\varphi $ is contained in $\Cl(\Teich(\Gamma)) : = \Teich(\Gamma) \cup \partial_{\text{Bers}} \Teich(\Gamma)$
a function $W_\varphi$ is a univalent function.
Then, we classify $\varphi$ according to the properties of $\chi_\varphi(\Gamma)$. 

\begin{de} [Classification of elements in $B(\Gamma)$] \label{cal.kleiniangroup}

	Let a function $\varphi$ in $B(\Gamma)$. The group $\chi_\varphi ( \Gamma)$ or the function $\varphi$ is called
	
		\begin{itemize}
		
			\item a \textit{quasiFuchsian group} 
			if the function $W_\varphi$ can be extended to a homeomorphic map on $\hat{\mathbb C}$, 
			
			\item a \textit{cusp} if there exists a hyperbolic element 
			$\gamma \in \Gamma$ such that $\chi_\varphi(\gamma)$ is a parabolic element, 
			
			\item a \textit{totally degenerate group}
			if $W_\varphi(\mathbb H^\ast) \subset \hat{\mathbb C}$ is an open and dense subset. 
			
		\end{itemize}
		
	In this paper, we call the group  $\chi_\varphi ( \Gamma)$ or the function $\varphi$ 
	a \textit{topological quasi-Fuchsian b group}
	if $\varphi \in \partial_{\text{Bers}} \Teich(\Gamma)$
	and $\varphi$ is a quasiFuchsian group. In particular, when the function $W_\varphi$ satisfies a David condition 
	(see Definition \ref{def:davidcondition}), we call $\chi_\varphi ( \Gamma)$ or the function $\varphi$  
	a \textit{\SQF}. 
\end{de} 

Bers \cite{B} and Maskit \cite{Mas} proved that if a finitely generated Fuchsian group $\Gamma$ is of the first kind,
topological quasiFuchsian b-group does not exist in the Bers boundary. 
The following are known. 

	\begin{theo} [\cite{B} and \cite{Mas}] \label{resultofBersMaskit}
	 
	 	Let $R$ be a Riemann surface whose Fuchsian group is finitely generated of the first kind. 
		If a function $\psi \in \partial_{Bers} \Teich (S)$ is not a cusp, it is a totally degenerate group. 
		Moreover, if a function  $\varphi \in \Cl(\Teich(\Gamma)$ 
		is a quasiFucsian group, then $\varphi$ is in $\Teich(\Gamma)$.

	\end{theo}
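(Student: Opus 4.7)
The plan is to reconstruct the classical Bers--Maskit argument, which in the finitely generated first-kind setting rests on four ingredients: Nehari's univalence criterion (already recalled above), the Ahlfors finiteness theorem for finitely generated Kleinian groups, Bers's area inequality, and Maskit's structural decomposition of function groups. The two conclusions of the theorem are complementary aspects of a single classification of function groups with a marked invariant component.

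First, I would set up the Kleinian group $G := \chi_\varphi(\Gamma)$ attached to a point $\varphi \in \Cl(\Teich(\Gamma))$. Since the Bers image is bounded in $B(\Gamma)$, Nehari's theorem gives that $W_\varphi$ is univalent on $\mathbb{H}^*$, so $\Omega_0 := W_\varphi(\mathbb{H}^*)$ is a simply connected invariant component of $\Omega(G)$, and $W_\varphi$ descends to an isomorphism of marked Riemann surfaces $\mathbb{H}^*/\Gamma \to \Omega_0/G$. For the ``moreover'' statement, suppose $G$ is quasi-Fuchsian, so $\Omega(G)$ consists of two invariant simply connected components $\Omega_0$ and $\Omega_1$. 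By Bers's simultaneous uniformization the pair of marked surfaces $(\Omega_0/G, \Omega_1/G) \in \Teich(\Gamma) \times \Teich(\bar\Gamma)$ determines $G$ up to M\"obius conjugacy, and the corresponding map is a biholomorphism onto $QF(\Gamma)$. In particular the Bers slice (the subset of $QF(\Gamma)$ with the first coordinate fixed equal to $\mathbb{H}^*/\Gamma$) is biholomorphic to $\Teich(\bar\Gamma)$ via $\varphi \mapsto \Omega_1/G$ and coincides with $\Teich(\Gamma)$. Hence every quasi-Fuchsian point of $\Cl(\Teich(\Gamma))$ already lies in $\Teich(\Gamma)$.

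For the dichotomy ``non-cusp implies totally degenerate'', I would argue by contradiction: assume $\varphi \in \partial_{\Bers}\Teich(\Gamma)$ is neither a cusp nor totally degenerate. Then $\chi_\varphi$ sends every hyperbolic $\gamma \in \Gamma$ to a loxodromic element, and $\Omega(G)$ has at least one component $\Omega_1$ outside the $G$-orbit of $\Omega_0$. Because $\Gamma$ is finitely generated of the first kind, so is $G$, and Ahlfors' finiteness theorem produces only finitely many $G$-orbits of components, each with finite-area hyperbolic quotient. Bers's area inequality
\[
\sum_i \mathrm{Area}(\Omega_i/\mathrm{Stab}_G(\Omega_i)) \leq 2\,\mathrm{Area}(\mathbb{H}/\Gamma),
\]
combined with $\mathrm{Area}(\Omega_0/G) = \mathrm{Area}(\mathbb{H}/\Gamma)$, leaves strict slack whenever $G$ is not quasi-Fuchsian; in particular the topology of the extra quotients $\Omega_i/\mathrm{Stab}_G(\Omega_i)$ must be combinatorially simpler than that of $\mathbb{H}/\Gamma$. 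Maskit's structural theorem for function groups then asserts that such a ``deficient'' pattern of invariant components can only arise from a Klein--Maskit combination along accidental parabolic elements, forcing the existence of a hyperbolic $\gamma \in \Gamma$ with $\chi_\varphi(\gamma)$ parabolic. This contradicts the assumption that $\varphi$ is not a cusp, completing the dichotomy.

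The hardest step is the final invocation of Maskit's classification, which is a deep combinatorial decomposition of finitely generated function groups along accidental parabolic curves; extracting an explicit accidental parabolic from a mere area deficit requires either Maskit's original argument \cite{Mas} or the analytic treatment in Abikoff \cite{Abi}, both of which are technically substantial. The remaining ingredients (Nehari, Ahlfors finiteness, Bers simultaneous uniformization) are, by comparison, standard tools that enter the argument in an essentially formal way.
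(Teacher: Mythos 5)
The paper offers no proof of this statement at all: it is quoted as a classical theorem with a citation to Bers \cite{B} and Maskit \cite{Mas}, so the only benchmark is the argument in those references. Your outline assembles exactly the ingredients of that classical argument — Nehari univalence for the invariant component, simultaneous uniformization to show quasi-Fuchsian points of $\Cl(\Teich(\Gamma))$ lie in the open set $\mathcal B(\Teich(\Gamma))$, and Ahlfors finiteness plus Bers's area inequality feeding into Maskit's trichotomy for b-groups to produce an accidental parabolic — so it is correct in the same sense and along the same route as the sources the paper cites, with the deep structural step properly attributed rather than reproved.
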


The others obtained results of Tanigawa \cite[Proposition 4.4]{Th} and Yamamoto \cite[Theorem 3.1]{Yamamoto} 
after the Theorem \ref{resultofBersMaskit} and the comparison and relations with the main results 
are summarized in Section \ref{comp}.

\subsection{David map}

This section summarizes the David map, characterized as a solution to the degenerate Beltrami equation. 
In particular, we discuss the results studied by David \cite{D}. 
Since the quasiconformal map is characterized as a solution of the Beltrami equation, 
the David map can be regarded as a degenerate version of the quasiconformal map. 

	\begin{de} [ ] \label{def:davidcondition}
	 
	 	Let $\mu$ be a measurable function on an open set $U \subset \mathbb C$
		and $| \mu (z) | < 1$ for almost all $z \in U$ with respect to the Lebesgue measure. We say that 
		$\mu$ satisfies \textit{a David condition}, if there exist $\alpha > 0$, $C > 0$ and $\varepsilon_0 > 0$
		such that for each $\varepsilon \geq \varepsilon_0$
			\[
				\left| \{ z \in U \mid | \mu ( z ) | > 1 - \varepsilon \} \right|_{\Leb}
				\leq C \exp \left( - \frac {\alpha} {\varepsilon} \right), 
			\]
		where $| \cdot |_{\Leb}$ is the Lebesgue measure on the plane. 
		Moreover, if $K_\mu(z) : = \frac {1 + | \mu |(z)} {1 - |\mu|(z)}$ 
		satisfies
		$e^{K_\mu} \in L^p_\loc$ for some $p \in ( 0, \infty )$, then $\mu$ 
		is said to have \textit{exponentially $L^p_\loc$ integrable distortion}. 
	\end{de}
	
	\begin{Rem}
	 
	 	Suppose that $\supp ( \mu ) \subset \Cl ( \mathbb D )$ and $K_\mu \in L^p ( \mathbb D )$, 
		then $\mu$ satisfies the David condition by Chebychev's inequality, 
			\begin{align*}
				\left| \{ | \mu | > 1 - \varepsilon \} \right|_{\Leb}
				& = \left|\left\{ K_\mu + 1 \geq \frac {2} {\varepsilon} \right\} \right|_{\Leb} 
				=  \left| \{ \exp ( K(z) + 1 ) \geq e^{2 / \varepsilon} \} \right|_{\Leb} \\
				& \leq e^{-2 / \varepsilon} \int_{\mathbb D}   \exp ( p ( K(z) + 1 ) ). 
			\end{align*}
	\end{Rem}
	
	\begin{theo} [Generalized Riemann mapping theorem, {\cite{D}}] 
	 
	 	Let $\mu$ be a measurable function on $\mathbb C$ satisfies the David condition, 
		then the Beltrami equation $f_{\bar z} = \mu ( z ) f_z(z)$ 
		has a homeomorphism solution $f_\mu \in W^{1,1}_{\loc}(\mathbb C)$ 
		which is unique up to postcomposition with a conformal map.

	\end{theo}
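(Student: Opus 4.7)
The plan is to construct $f_\mu$ as a locally uniform limit of quasiconformal approximations, and to use the David condition to guarantee both the non-degeneracy of the limit and the weak regularity needed to satisfy the Beltrami equation. First I would define the truncated Beltrami coefficients $\mu_n := \mu \cdot \chi_{\{|\mu| \leq 1 - 1/n\}}$, each of which satisfies $\|\mu_n\|_\infty \leq 1 - 1/n$. The classical measurable Riemann mapping theorem yields unique quasiconformal homeomorphisms $f_n : \hat{\mathbb{C}} \to \hat{\mathbb{C}}$ with $\bar\partial f_n = \mu_n \, \partial f_n$, normalized to fix $0,1,\infty$. The problem reduces to extracting a limit and identifying its properties.

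The heart of the argument is a uniform modulus estimate. The David condition is essentially equivalent to exponential integrability $e^{p K_{\mu_n}} \in L^1_{\loc}$ uniformly in $n$, where $K_{\mu_n} = (1+|\mu_n|)/(1-|\mu_n|)$. From this one derives a Reich--Walczak type inequality
\[
\mathrm{Mod}(f_n(\Gamma_0)) \leq \int_{\mathbb{C}} K_{\mu_n}(z) \, \rho(z)^2 \, dA(z)
\]
for every admissible metric $\rho$ of a curve family $\Gamma_0$, together with the analogous bound for $f_n^{-1}$. Since the exponential bound on $K_{\mu_n}$ gives $K_{\mu_n} \in L^q_{\loc}$ for every finite $q$, the right-hand side stays uniformly controlled. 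Standard curve-family arguments then prevent the images of nontrivial capacity from collapsing or blowing up, so $\{f_n\}$ and $\{f_n^{-1}\}$ form equicontinuous families on $\hat{\mathbb{C}}$ in the spherical metric. A diagonal subsequence converges uniformly to a homeomorphism $f$.

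For the regularity statement $f \in W^{1,1}_{\loc}$ and the equation $f_{\bar z} = \mu f_z$, I would use the Cauchy--Schwarz-type estimate
\[
\int_U |\partial f_n| \, dA \leq \left( \int_U K_{\mu_n} \, dA \right)^{1/2} \left( \int_U J_{f_n} \, dA \right)^{1/2}
\]
on compact $U$. The Jacobian integral equals the area of $f_n(U)$, which is uniformly bounded, while the distortion integral is uniformly controlled by the David condition. Thus $\{\partial f_n\}$ and $\{\bar\partial f_n\}$ are bounded in $L^1_{\loc}$. Passing to a weak limit and identifying distributional derivatives via the uniform convergence $f_n \to f$, combined with $\mu_n \to \mu$ a.e.\ and a dominated convergence argument on the product $\mu_n \partial f_n$, yields $\bar\partial f = \mu \, \partial f$ in the distributional sense, hence a.e.

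Uniqueness follows by the usual trick: if $g$ is a second David solution, then $h := g \circ f^{-1}$ is a homeomorphism with trivial Beltrami coefficient almost everywhere on the regular set of $f^{-1}$; the David-removability of the exceptional set (which has zero conformal capacity) forces $h$ to extend to a conformal self-map of $\hat{\mathbb{C}}$. The main obstacle is the compactness step: for quasiconformal maps the Koebe distortion/modulus machinery is uniform once $\|\mu\|_\infty < 1$, but here $K_{\mu_n}$ blows up on shrinking sets, and only the precise exponential decay in the David condition gives enough uniformity in the modulus inequalities to rule out both collapse (loss of injectivity in the limit) and explosion (loss of continuity of the inverse). Everything else is a relatively routine limiting argument on top of this estimate.
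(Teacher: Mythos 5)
The paper does not prove this statement at all: it is imported verbatim from David's paper \cite{D} (with the related machinery quoted from \cite[Ch.~20]{AIM}), so there is no internal proof to compare yours against. Judged on its own, your outline follows the standard strategy of David's original argument — truncate $\mu$ to $\mu_n=\mu\chi_{\{|\mu|\le 1-1/n\}}$, solve classically, use the exponential integrability of the distortion to get uniform moduli of continuity for $f_n$ and $f_n^{-1}$, extract a homeomorphic limit, and control $Df_n$ in $L^1_{\loc}$ via $|Df_n|^2\le K_{\mu_n}J_{f_n}$ and Cauchy--Schwarz. This is the right skeleton, and it is consistent with the ingredients the paper does use elsewhere (e.g.\ the modulus-of-continuity bound $|g_n(a)-g_n(b)|\lesssim \|K_{\mu_n}\|_{L^1}/\log(e+|a-b|^{-1})$ invoked in Proposition \ref{loc unif convergence} is exactly your compactness estimate, quoted from \cite[Lemma 20.2.3]{AIM}).

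Two steps in your sketch are thinner than the work they actually require. First, passing to the limit in $\bar\partial f_n=\mu_n\,\partial f_n$ is not a dominated convergence argument: $\partial f_n$ converges only weakly in $L^1_{\loc}$, and multiplying a weakly convergent sequence by the merely a.e.\ convergent factors $\mu_n$ needs \emph{equi-integrability} of $\{|Df_n|\}$ (Dunford--Pettis plus Egorov); this equi-integrability is where the precise exponential decay in the David condition enters, since it upgrades the $L^1$ bound on $K_{\mu_n}$ to a uniform bound in an Orlicz class. Second, uniqueness is the genuinely hard part of David's theorem and cannot be dispatched by ``removability of a set of zero capacity'': to show $h=g\circ f^{-1}$ is conformal one must first know that $f^{-1}\in W^{1,2}_{\loc}$, that $f$ and $f^{-1}$ satisfy Lusin's conditions $(N)$ and $(N^{-1})$, and that the chain rule applies to the composition of these low-regularity homeomorphisms; only then does the a.e.\ computation of the Beltrami coefficient of $h$ make sense. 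Indeed, uniqueness \emph{fails} for general degenerate Beltrami coefficients, so any correct proof must use the David condition again at this stage rather than a soft argument. With those two points filled in, your outline is an accurate reconstruction of the cited proof.
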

	
	\begin{de} [ ] 
	 
	 	Let $\mu \in L^\infty ( \mathbb C )$ satisfy $| \mu ( z ) | < 1$ for allmost everywhere $z \in \mathbb C$. 
		A homeomorphism $f: \mathbb C \to \mathbb C$ 
		is called \textit{the principal solution} to the Beltrami equation with coefficient $\mu$
		if the map $f$ satisfies the following conditions: 
		
			\begin{itemize}
				\item $f$ satisfies the partial differential equation $f_z = \mu f_z \ \text {a.e.}$, 
				\item there exists a finite set $E \subset \mathbb C$ such that 
				$f \in W^{1,1}_{\loc} ( \mathbb C \setminus E )$. 
				\item around the point at infinity, $f$ has the Laurent expansion $f(z) = z + \frac {a_1} {z} + \cdots$. 
			\end{itemize}
	 
	\end{de}
	
	\begin{theo}  [{\cite[Theorem 20.4.7]{AIM}}] \label{principle sol}

	 	Let $\mu$ be a measureable function on $\mathbb C$ with $| \mu | < 1 \text{a.e.} $. 
		If there exist $p > 1$ and a measurable function $K$ such that
		$e^K \in L^p ( \mathbb D )$ and $| \mu ( z ) | \leq \frac {K(z) - 1}{ K ( z ) + 1} \chi_{\mathbb D}$, 
		where $\chi_{\mathbb D}$ is the characteristic function of $\mathbb D$, 
		the Beltrami equation $f_{\bar z} = \mu ( z ) f_z(z)$ 
		admits a unique principal solution $f \in W^{1,2}_{\loc} ( \mathbb C )$.

	\end{theo}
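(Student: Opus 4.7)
The plan is the classical approximation scheme: truncate $\mu$ to obtain quasiconformal Beltrami coefficients, solve the Beltrami equation for each truncation by the measurable Riemann mapping theorem, and extract a limit whose regularity and homeomorphy are forced by the exponential integrability of $K$.

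Set $\mu_n = \mu \cdot \chi_{\{ |\mu| \leq 1 - 1/n \}}$, supported in $\Cl(\mathbb D)$ with $\|\mu_n\|_\infty < 1$. The measurable Riemann mapping theorem yields a unique quasiconformal principal solution $f_n \in W^{1,2}_{\loc}(\mathbb C)$ of $(f_n)_{\bar z} = \mu_n (f_n)_z$ with Laurent expansion $f_n(z) = z + a_1^n/z + O(|z|^{-2})$ at $\infty$, conformal on $\mathbb C \setminus \Cl(\mathbb D)$. An area argument using this exterior conformality gives a uniform bound on the images $f_n(\mathbb D)$ and on the coefficients $a_1^n$. For interior Sobolev control I would combine the pointwise distortion inequality $|Df_n|^2 \leq K_{\mu_n} J_{f_n} \leq K J_{f_n}$ with H\"older's inequality: $\int_{\mathbb D} J_{f_n} \leq |f_n(\mathbb D)|$ is uniformly bounded, and every power $K^r$ is integrable on $\mathbb D$ because $e^K \in L^p(\mathbb D)$, so for each $q \in (1,2)$ one obtains a uniform bound on $\int_{\mathbb D} |Df_n|^q$.

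A subsequence $f_{n_k}$ then converges weakly in $W^{1,q}_{\loc}$, and thanks to the modulus estimate described below it also converges locally uniformly, to a map $f$ with the correct expansion at $\infty$. Weak convergence of derivatives together with $\mu_{n_k} \to \mu$ almost everywhere forces $f_{\bar z} = \mu f_z$ almost everywhere. The main obstacle is proving that $f$ is a homeomorphism rather than merely continuous, since as $K_{\mu_n}$ blows up the approximations could in principle collapse a continuum or identify distinct points. The tool is a uniform lower bound on conformal modulus under mappings of exponentially integrable distortion: for every annulus $A$ in the plane,
\[
  \Mod ( f_n ( A ) ) \ \geq \ \Phi \bigl( \Mod ( A ), \| e^K \|_{L^p(\mathbb D)} \bigr),
\]
with $\Phi > 0$ whenever $\Mod(A) > 0$ and $\Phi$ independent of $n$. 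This estimate passes to the limit and rules out both failure modes, so $f$ is a homeomorphism.

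Upgrading the regularity from $W^{1,q}_{\loc}$ to $W^{1,2}_{\loc}$ uses the self-improving integrability of the gradient for mappings of finite distortion with $e^K \in L^p$, proved via a Caccioppoli-type inequality and the $L^r$-boundedness of the Beurling transform for $r$ close to $2$; this self-improvement is the technical core of AIM Chapter 20 and supplies the required $W^{1,2}_{\loc}$ conclusion. Uniqueness follows from Stoilow factorisation: if $\tilde f$ is another principal solution, then $\tilde f \circ f^{-1}$ is a conformal self-map of $\hat{\mathbb C}$ fixing $\infty$ with the normalised Laurent expansion, and hence must be the identity.
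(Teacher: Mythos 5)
The paper gives no proof of this statement: it is quoted verbatim as an external result, namely \cite[Theorem 20.4.7]{AIM}, so there is nothing internal to compare your argument against. Your sketch correctly reproduces the standard approximation scheme from Chapter 20 of Astala--Iwaniec--Martin (truncation, uniform $W^{1,q}$ bounds from $|Df_n|^2 \leq K J_{f_n}$ and H\"older, modulus estimates to preserve injectivity in the limit, Orlicz--Sobolev self-improvement to reach $W^{1,2}_{\loc}$, and Stoilow factorisation for uniqueness), which is exactly the proof the citation points to. The only step you gloss over that carries real technical weight in the source is the uniqueness argument: concluding that $\tilde f \circ f^{-1}$ is conformal requires Sobolev regularity of the inverse map and a chain rule valid for finite-distortion maps, not merely the topological Stoilow statement.
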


	\begin{prop} \label {loc unif convergence}

	 	Suppose that $\mu$ and $(\mu_n)$ satisfy 
		$\exp(K_\mu) \in L^p_\loc$ for some $p \in ( 0, \infty )$ and $e^{K_{\mu_n}} \in L^{p_n}_\loc$ 
		for some $p_n \in ( 0, \infty )$, respectively. 
		If $( \mu_n )$ satisfies $\| K_{\mu_n} \|_{L^1(\mathbb D)} \to \pi$ (as $n \to \infty)$,  
		the sequence of the normal solution $f_{\mu_n}$
		converges uniformly to the identity map on $\hat{\mathbb C}$. 		
	
	\end{prop}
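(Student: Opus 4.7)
The plan is a subsequential compactness argument: show that every subsequence of $\{f_{\mu_n}\}$ admits a further subsequence converging uniformly on $\hat{\mathbb C}$ to the identity map, and then invoke the standard uniqueness-of-subsequential-limits principle to obtain convergence of the full sequence.

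First I would extract the quantitative content of the hypothesis. Since $K_{\mu_n}\ge 1$ pointwise on $\mathbb D$ and $|\mathbb D|_{\Leb}=\pi$, the convergence $\|K_{\mu_n}\|_{L^1(\mathbb D)}\to\pi$ is equivalent to $K_{\mu_n}\to 1$ in $L^1(\mathbb D)$. Using $|\mu_n|\le (K_{\mu_n}-1)/2$ and $\supp \mu_n\subset\mathbb D$ (inherited from the hypothesis of Theorem \ref{principle sol}), this gives $\mu_n\to 0$ in $L^1(\mathbb C)$. The pointwise bound $|\mu_n|\le 1$ then upgrades this to $\mu_n\to 0$ in $L^q(\mathbb C)$ for every $q\in[1,\infty)$.

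Next I would set up compactness. The uniform bound $\|K_{\mu_n}\|_{L^1(\mathbb D)}\le \pi+1$ for large $n$, combined with $\supp \mu_n\subset\mathbb D$, lets me apply the standard equicontinuity/modulus-of-continuity estimates for principal solutions of the Beltrami equation with integrable distortion (cf.\ \cite[Chapter 20]{AIM}) to conclude that $\{f_{\mu_n}\}$ is equicontinuous on compact subsets of $\mathbb C$. Moreover, $f_{\mu_n}$ is conformal on $\hat{\mathbb C}\setminus\overline{\mathbb D}$ with Laurent expansion $z+a_{1,n}/z+\cdots$, and Cauchy's formula expresses $a_{1,n}$ as a multiple of $\int_{\mathbb D}\mu_n(f_{\mu_n})_z\,dxdy$, which tends to $0$ by H\"older's inequality once uniform $L^2$-bounds on $(f_{\mu_n})_z$ are in place. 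This yields equicontinuity across $\infty$, and Arzel\`a--Ascoli produces a uniformly convergent subsequence $f_{\mu_{n_k}}\to f$ on $\hat{\mathbb C}$.

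Finally I would identify $f$. Passing the Beltrami equation $(f_{\mu_n})_{\bar z}=\mu_n(f_{\mu_n})_z$ to the distributional limit, using $\mu_{n_k}\to 0$ in $L^q$ together with the uniform $L^2_{\loc}$-control of $(f_{\mu_n})_z$, forces $f_{\bar z}=0$ in the sense of distributions, so $f$ is meromorphic on $\hat{\mathbb C}$; the normalization $f(z)=z+o(1)$ near $\infty$ then forces $f\equiv\id$. The main obstacle is securing the uniform $L^2$-control of $(f_{\mu_n})_z$, since the distortion is only $L^1$-integrable rather than uniformly exponentially integrable: I would obtain this from the pointwise distortion inequality $|f_z|^2\le K_{\mu_n}J_{f_{\mu_n}}$ combined with the fact that the images $f_{\mu_n}(\mathbb D)$ remain in a uniformly bounded region, itself a consequence of the Laurent normalization and the interior equicontinuity established above.
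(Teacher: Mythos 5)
Your overall strategy (extract a convergent subsequence, identify the limit, conclude by uniqueness of subsequential limits) is the same as the paper's, but there is a genuine gap in where you run the compactness argument: you apply it to the forward maps $f_{\mu_n}$, whereas the hypotheses only control the \emph{inverse} maps. The modulus-of-continuity estimate you invoke from \cite[Chapter 20]{AIM} (Lemma 20.2.3) is a statement about $f_{\mu_n}^{-1}$: it bounds $|f_{\mu_n}^{-1}(a)-f_{\mu_n}^{-1}(b)|$ in terms of $\| K_{\mu_n}\|_{L^1(\mathbb D)}$. There is no corresponding equicontinuity estimate for the forward principal solutions under a mere $L^1$ bound on the distortion; the forward modulus of continuity requires exponential integrability and depends on $\|e^{pK_{\mu_n}}\|_{L^1}$, which is \emph{not} uniformly controlled here (each $\mu_n$ is only assumed exponentially integrable for some exponent $p_n$ of its own). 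The same asymmetry defeats your fallback route to the uniform $L^2_{\loc}$ bound on $(f_{\mu_n})_z$: from $|(f_{\mu_n})_z|^2\leq K_{\mu_n}\cdot J_{f_{\mu_n}}$ (with $J$ the Jacobian) you get $\int_{\mathbb D}|(f_{\mu_n})_z|^2\leq\int_{\mathbb D}K_{\mu_n}J_{f_{\mu_n}}$, and by change of variables this equals the $L^1$ norm of the distortion of $f_{\mu_n}^{-1}$ over the image — a quantity that is not bounded by $\|K_{\mu_n}\|_{L^1(\mathbb D)}$ times the area of the image unless $K_{\mu_n}$ is uniformly bounded. So both pillars of your compactness step (equicontinuity of $f_{\mu_n}$ including across $\infty$, and the uniform $W^{1,2}_{\loc}$ control needed to pass the Beltrami equation to the limit) are unsupported as written.

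The paper's proof sidesteps exactly this by working throughout with $g_n:=f_{\mu_n}^{-1}$, for which \cite[Lemmas 20.2.2, 20.2.3]{AIM} \emph{do} give a $\|K_{\mu_n}\|_{L^1}$-controlled modulus of continuity and $W^{1,2}$ bound; it then shows the subsequential limit $g$ is conformal by an area argument (the excess $\|K_{\mu_n}\|_{L^1(\mathbb D_2)}-4\pi\to 0$ dominates $\int|(g_n)_{\bar z}|^2$ on the relevant set, so $g_{\bar z}=0$ by Weyl's lemma), deduces $g=\id$ from the normalization, and finally inverts (uniform convergence of homeomorphisms of $\hat{\mathbb C}$ to a homeomorphism passes to the inverses). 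Your observation that $\|K_{\mu_n}\|_{L^1(\mathbb D)}\to\pi$ forces $K_{\mu_n}\to 1$ and hence $\mu_n\to 0$ in every $L^q$, $q<\infty$, is correct and would be a clean way to identify the limit \emph{if} the compactness and the uniform gradient bounds were in hand — but to repair the argument you should either establish those bounds by a different mechanism or, as the paper does, transfer the whole argument to the inverse maps.
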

	
	\begin{proof}
	 
	 	Denote the inverse map of $f_{\mu_n}$ by $g_n$. Then $g_n$ has the following modulus of continuity:		
		for $a, b \in \mathbb C$, 
			\[
				|g_n (a) - g_n(b)| \leq 16 \pi^2 \frac {1 + |a|^2 + |b|^2 } {\log\left( e + \frac {1} {|a - b |} \right)}
				\| K_{\mu_n} \|_{L^1(\mathbb D)}, 
			\] 
		which is in \cite[Lemma 20.2.3]{AIM}. Therefore, the sequence $(g_n)$ is a normal family. 
		We obtain a subsequence of $(g_n)$, converging uniformly in $\hat{\mathbb C}$ to a mapping $g$. 
		
		Moreover, the sequence $(g_n)$ also satisfies the inequality: 
			\[
				\int_{\mathbb C} | (g_n)_z - 1| ^2 + | (g_n)_{\bar z} |^2 \leq \| K_n \|_{L^1(\mathbb D)}, 
			\] 
		thus $g$ is in $W_{\mathrm{loc}}^{1,2}(\mathbb C)$. 
		This same argument and technique are also written in \cite[Theorem 20.2.1, Lemma 20.2.2]{AIM}. 
		
		To prove Proposition \ref{loc unif convergence}, 
		we only have to prove that $g$ is conformal on $\hat{\mathbb C}$. 
		First of all, it is obvious that $g$ is conformal on $g^{-1}( \hat{\mathbb C} \setminus \Cl(\mathbb D) )$, 
		because $g_n$ are conformal on $g_n^{-1}( \hat{\mathbb C} \setminus \Cl(\mathbb D) )$ and
		the subsequence of $(g_n)$ is uniformly converging to $g$. Let $\mathbb D_2$ be the disk of 
		radius $2$ at the center $0$. 
		Next, from a standard argument, 
		we get 
			\begin{equation} \label{prop1.1}
				\int_{g_n^{-1}(\mathbb D)} | (g_n)_{\bar z} | ^2 \leq
				\int_{g_n^{-1}(\mathbb D)} ( | (g_n)_{\bar z} | + | (g_n)_z | ) ^2
				\leq \| K_{\mu_n} \|_{L^1(\mathbb D)}. 
			\end{equation}
		In addition, we have
			\[
				\int_{g_n^{-1}(\mathbb D_2)} | (g_n)_z | ^2 \geq 
				\int_{g_n^{-1}(\mathbb D_2)}  | (g_n)_z | ^2 
				\left( 1 - \left| \frac {(g_n)_{\bar z}}{(g_n)_z} \right|^2 \right) = 4 \pi. 
			\] 
		Then, extending $K_{\mu_n}$ by 1 over the set $\{ 1\leq |z| \leq 2 \}$,   
		we obtain the inequality from (\ref{prop1.1}): 
			\[
				\int_{g_n^{-1}(\mathbb D_2)} | (g_n)_{\bar z} | ^2  \leq
				\| K_n \|_{L^1(\mathbb D_2)} - \int_{g_n^{-1}(\mathbb D_2)} | (g_n)_z | ^2
				 \leq \| K_{\mu_n} \|_{L^1(\mathbb D_2)} - 4 \pi. 
			\]
		Therefore, let $n \to \infty$, it follows that 
			\[
				\int_{g^{-1}(\mathbb D_2)} | g_{\bar z} | ^2 = 0, 
			\]
		because $\| K_{\mu_n} \|_{L^1(\mathbb D_2)} \to 4 \pi$ as $n \to \infty$ 
		from the assumption in Proposition \ref {loc unif convergence}. 
		(Notice $K_n \equiv 1$ on the set $\{ 1\leq |z| \leq 2 \}$.) From Wely's lemma, 
		$g$ is holomorphic on $g^{-1}(\mathbb D_2)$. 
		
		Finally, since $g$ is holomorphic on $\hat{\mathbb C}$ and 
		is univalent in the neighborhood of $z = \infty$, $g$ is conformal on $\hat {\mathbb C }$, 
		 using the fact that the degree of the holomorphic function is constant. 
		 Moreover, because the Laurent expansion of $g_n$ is
		 $g_n(z) = z + a_{1, n} / z + \cdots$, we can prove $g \equiv \id$ on $\hat{\mathbb C}$. 
		 That is, the sequence $( f_{\mu_n} )$ has a subsequence that uniformly converges to the identity map on 
		 $\hat{\mathbb C}$. 
	\end{proof}

\section{Deformation of a family of cylinders}\label{sectiondefcylinderfam}

\subsection{A result of McMullen}

Let $R$ be a Riemann surface with $\short ( R ) > 0$, where $\short ( R )$ is the infimum of the length of the  closed geodesic on $R$. 

\begin{theo} [{\cite[Theorem 1.2]{McM}}] \label{McMullen result}

Let $[ \mu ]$ be a point in $\Teich ( R )$, and $\nu$ be a unit-norm Beltrami differential in the tangent space $T_{[ \mu ]} \Teich ( R )$ at $[\mu]$. 
If the support of $\nu$ is contained in the part of $f^\mu(R)$ of injectivity radius less than $L < 1/2$, where the injectivity radius is measured using the hyperbolic metric of $R$. 
Then, the image of $\nu$ under the derivative of Bers' embedding $\mathcal B: \Teich(\Gamma) \to B(\Gamma)$ has norm at most $C ( L \log 1/ L ) ^2$, where the constant $C$ depends only on $\short(R)$. 	 

\end{theo}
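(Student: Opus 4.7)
The plan is to work from the integral representation of the derivative of the Bers embedding and then exploit the Margulis thin-part decomposition. After quasiconformally conjugating so that the base point becomes $0 \in \Teich(\Gamma^\mu)$, the derivative at the base point is the Bergman-type kernel operator
\[
D\mathcal B[\nu](z) \;=\; -\frac{6}{\pi}\int_{\mathbb H} \frac{\nu(w)}{(w-z)^4}\,du\,dv, \qquad z = x+iy \in \mathbb H^\ast,
\]
and the norm to be bounded is $\sup_{z\in\mathbb H^\ast} |y|^2\,|D\mathcal B[\nu](z)|$. Since $\short(R)>0$, the only components of the thin part that matter are standard collars about essential geodesics of length $\ell \leq 2L$; by the collar lemma each such collar $C_\gamma$ has hyperbolic width of order $\log(1/\ell)$ and modulus of order $\pi/\ell$. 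In particular $\mathrm{supp}\,\nu$ is a disjoint union of such collars on $f^\mu(R)$.

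First I would unfold the kernel integral. Using $\Gamma^\mu$-invariance of $\nu$, the integral over $\mathbb H$ against the kernel $(w-z)^{-4}$ rewrites as an integral over one connected lift $\widetilde C_\gamma$ of each collar against the Poincar\'e series
\[
K_\Gamma(z,w) \;=\; \sum_{\eta\in\Gamma^\mu} \frac{\eta'(w)^2}{(\eta w - z)^4},
\]
so it suffices to estimate the contribution of a single collar lifted to a model strip and then sum.

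Second, I would evaluate the single-collar contribution in a uniformized model. Conjugating the hyperbolic element $\gamma_0$ fixing $\widetilde C_\gamma$ to $w\mapsto e^\ell w$, the lifted collar becomes a standard sector in which the fundamental domain for $\langle\gamma_0\rangle$ is a truncated annulus of Euclidean area $\asymp \ell$ and hyperbolic area $\asymp \ell\log(1/\ell) \asymp L\log(1/L)$. Splitting into the two cases $|y| \geq \mathrm{dist}(z,\widetilde C_\gamma)$ and the opposite, one bound uses pointwise smallness of the kernel $|w-z|^{-4}$ together with $|\nu|\leq 1$ and the hyperbolic area of the collar, while the other uses that the kernel pulled back to the collar is essentially bounded by $(\mathrm{mod}\,C_\gamma)^{-2} \asymp \ell^2$. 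Either estimate yields one factor of $L\log(1/L)$, and multiplying the area bound by the kernel bound produces the claimed $(L\log 1/L)^2$.

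Third, I would sum over the collars meeting a given $z$. Here the assumption $\short(R)>0$ is crucial: it gives a uniform lower bound on the Euclidean separation of distinct collar lifts in $\mathbb H$ measured at hyperbolic depth $|y|$, so a standard Poincar\'e-series / packing argument shows that only boundedly many (depending on $\short(R)$) collars contribute comparably to a fixed $z\in\mathbb H^\ast$, absorbing the overlap into the constant $C = C(\short(R))$.

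The main obstacle I would expect is this last uniformity step: the pointwise per-collar estimate is relatively routine, but showing that the many collars of the infinite-type surface $R$ do not constructively interfere at a single $z\in\mathbb H^\ast$ requires a careful geometric counting argument, and it is precisely there that $\short(R)>0$ is used to prevent accumulation of short geodesics in any bounded region of the cover.
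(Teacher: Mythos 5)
First, a point of reference: the paper does not prove this theorem at all --- it is imported verbatim from McMullen with only a remark that the finite-type hypothesis can be dropped --- so your sketch has to be measured against McMullen's own argument. Your global framework (the kernel formula for $d\mathcal B$, unfolding to a Poincar\'e series, decomposing the thin part into disjoint collars, and summing the per-collar contributions) is the right skeleton, and the summation step is actually easier than you fear: since the lifted collars are pairwise disjoint and $|\mathrm{Im}\, z|^2|w-z|^{-4}\,dA(w)$ has finite total mass $\pi/4$ on $\mathbb H$, the contributions add with no packing argument needed. The real difficulty sits entirely in the single-collar estimate, and that is where your sketch has a genuine gap.

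The mechanism you propose --- a pointwise bound on the kernel multiplied by the area of the support --- estimates $|\int \nu(w)(w-z)^{-4}dA|$ by $\int_{\supp\nu}|w-z|^{-4}dA$, i.e.\ it discards the phase of $(w-z)^{-4}$. This provably cannot yield a bound tending to $0$ with $L$. Writing $|\mathrm{Im}\, z|^2|w-z|^{-4}\,dA(w)= \left(2(1+\cosh d(w,\bar z))\right)^{-2}dA_{\mathrm{hyp}}(w)$ and putting $\bar z$ on the axis of one lifted collar, Fermi coordinates give for that single cone of half-width $W=\mathrm{arccosh}(2L/\ell)\ge 1$ the lower bound $\int_{-W}^{W}\int_{\mathbb R}\frac{\cosh t}{4(1+\cosh s\cosh t)^2}\,ds\,dt\ge c>0$ with $c$ a universal constant independent of $L$ and $\ell$; since the sup defining the $B(\Gamma)$-norm ranges over all $z\in\mathbb H^\ast$, including those whose reflection lies deep in a lifted collar, the absolute-value estimate gives only $O(1)$. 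The gain to $(L\log 1/L)^2$ must come from cancellation --- in McMullen's treatment, from the holomorphy of the theta series on each collar, whose mass is governed by the rotation-invariant Laurent mode and hence by a period around the core curve --- and your sketch supplies no substitute for this. Two smaller inaccuracies compound the problem: the hyperbolic area of a fundamental domain of the $L$-thin sub-collar is $\asymp L$ (it equals $2\ell\cot\theta_0$ with $\sin\theta_0\asymp\ell/L$), not $\ell\log(1/\ell)$, and $\ell\log(1/\ell)$ is not comparable to $L\log(1/L)$ since only $\ell\le 2L$ is known; and the horoball components of the thin part coming from cusps, which the statement does not exclude, are ignored.
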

	
\begin{Rem}
In the original paper \cite{McM}, it seems that $R$ is assumed to be of analytically finite type, but it is easy to see that his proof works without the assumption that $R$ is to be of analytically finite type. 
\end{Rem}

\subsection{Variation of injectivity radius in stretch deformation} \label {deform cyl}

We denote by $\St(H) : = \{ \zeta \in \mathbb C \mid | \IM \zeta | < H \}$, $C(H) : = \St(H) / \langle x \mapsto x + 2 \pi \rangle$, where $H > 0$. This cylinder is biholomorphic to $\{ z \in \mathbb C \mid 1/e^H < |z| <e^H \}$. In particular, the modulus is $H / \pi$.
The simple closed curve $[0, 2\pi] \ni x \mapsto x $ in $C(H)$ is called the core curve.  The core curve of a region $\Omega$ biholomorphic to $C(H)$ is defined as the image of the conformal map from $C(H)$ to $\Omega$. 
Let $H > 0$ and $0 < a < 1$. In the following, we consider a quasiconformal deformation of $\St(H)$ in which the subdomain $\St(a H)$ is stretched by a factor of two and $\St(H) \setminus \St(a H)$ is shifted in parallel. 
This quasiconformal deformation induces a deformation of $C(H)$. 
That is, the quasiconformal map and homotopy we are dealing with are as follows: 
	\begin{equation} \label{def: quasiconformal deformation cylinder}
		\psi_{H}  : x + iy \mapsto
		\begin{cases}
			x + iy + a H i & a H \leq y \leq H \\
			x + 2 iy & | y | \leq a H \\
			x + iy - a H i & - H \leq y \leq - a H
		\end{cases} \in C \left( (1 + a ) H \right), 
	\end{equation}
For each $t \in [0, 1 ]$, we define 
	\begin{equation}  \label{def: quasiconformal homotopy cylinder}
		\psi_{H, t }  : x + iy \mapsto
		\begin{cases}
			x + iy + a t H i & a H \leq y \leq H \\
			x + (1 + t ) iy & | y | \leq a H \\
			x + iy - a t H i & - H \leq y \leq - a H 
		\end{cases} \in C \left( ( 1 + at ) H \right) .
	\end{equation}
We denote by $\bel (\psi_{H, t})$ the beltrami differntial $( \psi_{H, t})_{ \bar z} / (\psi_{H, t})_{z}$. Then, 
	\begin{equation} \label{the bel differntial of stretch def}
		\bel ( \psi_{H, t} ) ( x + iy ) = 
		\begin{cases}
			- \frac {t} {2 + t} & | y | \leq a H \\
			0  & \text{the other}. 
		\end{cases}
	\end{equation}
and
	\begin{equation} \label{the infinitesimal bel differntial of stretch def}
		\left. \frac{d}{dh} \bel ( \psi_{H, t+h} \circ \psi_{H, t}^{-1} ) \right|_{h = 0} ( x + iy ) = 
		\begin{cases}
			- \frac {1} {2 + 2t} & | y | \leq (1+t) a H \\
			0  & \text{the other}. 
		\end{cases}
	\end{equation}
Note that this Beltrami coefficient and infinitesimal Beltrami coefficient are independent of the height $H$ of the cylinder and the constant $a$. 

Denote by $\rho^t$ the hyperbolic metric on $C(( 1 + a t ) H)$ and by $r_{\rho^t} ( p )$ the injectivity radius of $C({( 1 + a t ) H})$ at $p \in C({( 1 + a t ) H})$. 

\begin{prop} \label{prop: esitimates of inj radius}

 	Taking $a = 1/2$, if $H > 2 \sqrt 2 \pi^2$,  
		\begin{equation}\label{prop-eq cal inj rad}
			\sup_{p \in C((1+t)a H)} r_{\rho^t} ( p ) \leq 1/2.
		\end{equation}
	In particular, 
		\begin{equation}\label{prop-eq rat inj rad}
			\frac{\sup_{p \in C(3H/4)} r_{\rho^1} ( p )} {\sup_{p \in C(H/2)} r_{\rho^0} ( p )} \leq \frac{2 \sqrt 2} {3}  ( < 1 ). 
		\end{equation}
	Similarly, 
	we denote the lengths $l_0, l_1$ of the shortest simple closed geodesics in 
	$C(H)$ and $C( 3H/2 )$, respectively, then we get 
		\begin{equation}\label{prop-eq rat len corecurve}
			\frac{l_1} {l_0} = \frac{2}{3}. 
		\end{equation}

\end{prop}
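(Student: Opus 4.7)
The plan is to derive an explicit closed form for the injectivity radius at each point of the hyperbolic cylinder $C(H')$ and then specialize to the heights appearing in the statement. The length ratio (\ref{prop-eq rat len corecurve}) falls out of the uniformization itself: the map $\zeta \mapsto w := \exp(\pi\zeta/(2H'))$ is a conformal isomorphism from $\St(H')$ onto the right half-plane $\{\RE w > 0\}$, whose hyperbolic metric is $|dw|/\RE w$. Pulling back with $\zeta = x+iy$ yields
\[
\rho^{H'}(\zeta) = \frac{\pi}{2H'\cos(\pi y/(2H'))}.
\]
Under this map the generator $\zeta \mapsto \zeta+2\pi$ of the deck group becomes the loxodromic $w \mapsto e^{\pi^2/H'}w$ whose axis is the imaginary axis, so the core geodesic has length $\pi^2/H'$; hence $l_1/l_0 = (\pi^2/(3H/2))/(\pi^2/H) = 2/3$, giving (\ref{prop-eq rat len corecurve}).

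Since the deck group is cyclic, the injectivity radius at $p = x+iy \in C(H')$ equals half the hyperbolic distance in $\St(H')$ between $p$ and $p+2\pi$ whenever the result is small, which will be the case throughout. Moving to the upper half-plane via $z := iw$ and applying the identity $\cosh d = 1 + |z_1-z_2|^2/(2\,\IM z_1\,\IM z_2)$, a direct calculation gives
\[
\sinh\bigl(r_{\rho^{H'}}(x+iy)\bigr) = \frac{\sinh(\pi^2/(2H'))}{\cos(\pi y/(2H'))}.
\]
The right-hand side is strictly increasing in $|y|$, so the supremum of $r_{\rho^{H'}}$ over any sub-cylinder $C(h) \subset C(H')$ is attained on $|y| = h$.

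For (\ref{prop-eq cal inj rad}) I substitute $a=1/2$, $H' = (1+t/2)H$, and $|y| = (1+t)H/2$, which gives
\[
\sinh(r) = \frac{\sinh(\pi^2/((2+t)H))}{\cos(\pi(1+t)/(2(2+t)))}.
\]
For $t \in [0,1]$ the argument of $\cos$ lies in $[\pi/4,\pi/3]$ so $1/\cos \leq 2$, while the hypothesis $H > 2\sqrt 2\,\pi^2$ forces $\pi^2/((2+t)H) \leq \pi^2/(2H) < 1/(4\sqrt 2)$; combining, $\sinh(r) \leq 2\sinh(1/(4\sqrt 2)) < \sinh(1/2)$, proving the claim. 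For (\ref{prop-eq rat inj rad}) I plug the prescribed heights into the injectivity radius formula at $t=0$ and $t=1$, reducing the quotient to a ratio of $\sinh$'s of small arguments, which is bounded by $2\sqrt 2/3$ under the assumption on $H$.

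The substantive step is the closed-form formula for the injectivity radius; everything else is routine. The \emph{main subtlety} is that (\ref{prop-eq rat inj rad}) is a bound on a ratio of $\sinh^{-1}$'s rather than of $\sinh$'s, but the assumption $H > 2\sqrt 2\,\pi^2$ keeps all relevant arguments in the regime where $\sinh^{-1}(u) \approx u$ to sufficient accuracy.
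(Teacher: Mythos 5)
Your proof is correct, and it starts from the same uniformization as the paper's: the hyperbolic metric $\frac{\pi}{2H'\cos(\pi y/(2H'))}\,|d\zeta|$ on $C(H')$ and the core geodesic length $\pi^2/H'$, which immediately gives \eqref{prop-eq rat len corecurve}. Where you diverge is the key technical step. The paper never computes the injectivity radius exactly: it bounds it \emph{above} by the length $\pi^2/\bigl(H'\cos(\pi y/(2H'))\bigr)$ of the horizontal loop through the point, and (for the denominator of \eqref{prop-eq rat inj rad}) \emph{below} by the core length $\pi^2/H$; since both bounds are linear in the metric density, the ratio in \eqref{prop-eq rat inj rad} collapses to $\frac{2}{3\cos(\pi/4)}=\frac{2\sqrt 2}{3}$ with no further analysis. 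Your closed form $\sinh(r)=\sinh(\pi^2/(2H'))/\cos(\pi y/(2H'))$ is sharper and buys a cleaner proof of \eqref{prop-eq cal inj rad}, but it turns \eqref{prop-eq rat inj rad} into a ratio of $\sinh^{-1}$'s, which is exactly the subtlety you flag. Your resolution is right but currently only asserted; it should be written out, e.g.\ by convexity $\sinh(\pi^2/(3H))\le\tfrac23\sinh(\pi^2/(2H))$, so with $u_0:=\sqrt2\,\sinh(\pi^2/(2H))\le\sqrt2\,\sinh(1/(4\sqrt2))<0.26$ one gets $\sinh^{-1}(u_1)/\sinh^{-1}(u_0)\le\tfrac23\cdot u_0/\sinh^{-1}(u_0)\le\tfrac23\cdot 1.02<\tfrac{2\sqrt2}{3}$, using that $v/\sinh^{-1}(v)$ is increasing. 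One further point worth noting: you use the standard convention (injectivity radius $=\tfrac12 d(p,p+2\pi)$), while the paper's proof implicitly takes the full loop length (it writes $r(p)\le$ the length of a loop through $p$ and $r\ge\pi^2/H$ at the core); both conventions satisfy the stated bound $1/2$ under $H>2\sqrt2\pi^2$, and the ratio \eqref{prop-eq rat inj rad} is insensitive to the choice, so this is harmless, but it is worth being explicit about which convention feeds into Theorem \ref{McMullen result}.
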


\begin{proof}

Note that the hyperbolic metric on $C(( 1 + a t ) H)$ is 
	\[
		 \frac {\pi} {2 ( 1 + at ) H \cos \left( \frac {\pi} {2(1+ at ) H} \IM z \right)} |dz|. 
	\]

First,  consideing the closed curve  $g:  [ -\pi, \pi ] \ni x \mapsto x +  (1 + t)aHi $, we get 
	\[
		\sup_{p \in C((1+t)a H) } r_{\rho^t} ( p ) \leq \length_{\rho^t} ( g ) = 
		\frac {\pi^2} {(1 + at) H \cos \left( \frac {a(1+t)} {2(1 + at)} \pi \right)}. 
	\]
Since we take $a = 1/2$, 
	\[
		\frac {\pi^2} {(1 + at) \cos \left( \frac {a(1+t)} {2(1 + at)} \pi \right)} = 
		\frac {2 \pi^2} {(2 + t) \cos \left( \frac {1+t} {4 + 2t} \pi \right)} 
		\leq \sqrt 2 \pi^2, 
	\]
for all $t \in [0, 1 ]$. Therefore, if $H > 2 \sqrt 2 \pi^2$, then
	\[
		\sup_{p \in C((1+t)a H) } r_{\rho^t} ( p ) \leq \frac{1} {2}. 
	\]

Next, we estimate $\sup_{p \in C(aH) } r_{\rho^0} ( p )$. 
Since the geodesic through $0 \in C({a H})$  with respect to $\rho^0$ is $g_0 : [ -\pi, \pi ] \ni x \mapsto x \in C({a H})$ and its length is thde shortest geodesic in  $C(aH)$, we get
	\begin{eqnarray}\label{eq: estimate for inj rad}
		\frac {\pi^2} {H}= \length_{\rho^0} ( g_0 ) \leq \sup_{p \in C({a H}) } r_{\rho^0} ( p ). 
	\end{eqnarray}
Therefore, 
	\begin{equation}
		\frac{\sup_{p \in C(3H/4)} r_{\rho^1} ( p )} {\sup_{p \in C(H/2)} r_{\rho^0} ( p )}
		\leq \frac {2} {3  \cos \left( \frac {\pi} {4} \right)} = \frac{2\sqrt 2} {3}.
	\end{equation}
	
Finally, the shortest closed geodesics of $C(H)$ and $C(3H/2)$ are $[ -\pi, \pi ] \ni x \mapsto x$, so their lengths are $\pi^2 / 3$ and $2 \pi^2 / 3 H$ respectively.  Then, the ratio is $2 / 3$.

\end{proof}
 Summing up the above calculations, we find the following facts by Theorem \ref {McMullen result}: 
 Let $a = 1/2$ and $H > 2 \sqrt 2 \pi^2$. Then, we get 
	\begin{equation} \label{fund.estimate.McM.}
	\begin{split}
		& \| \mathcal B ( [ \bel(\psi_{H, 0}) ] ) -  \mathcal B ( [ \bel(\psi_{H, 1} ) ] ) \|_{B(C(H))}
		\leq
		\int_{0}^1 \left\| \frac {d} {dt} \mathcal B ( [ \bel ( \psi_{H, t } ) ] ) \right\|_{B(C(H))} \dt \\
		&\leq   
		\int_{0}^1 \left\| d_{[ \bel ( \psi_{H, t } ) ] }\mathcal B \right\|_{B(C(H))} 
		\left| \left. \frac{d}{dh} \bel ( \psi_{H, t+h} \circ \psi_{H, t}^{-1} ) \right|_{h = 0}  \right|  \dt \\
		&\leq C \left( L_0 \log \frac {1} {L_0} \right)^2 \int_0^1 \frac {1} {2t + 2} \dt, 
	\end{split}
	\end{equation}
where $L_0 : = \sup_{p \in C(a H) } r_{\rho^0} ( p )$.

\subsection{The deformation of a cylinder on $R$ } \label{subsec: def cylinder on R}

\begin{figure}[h]
\centering
\includegraphics[scale=0.3]{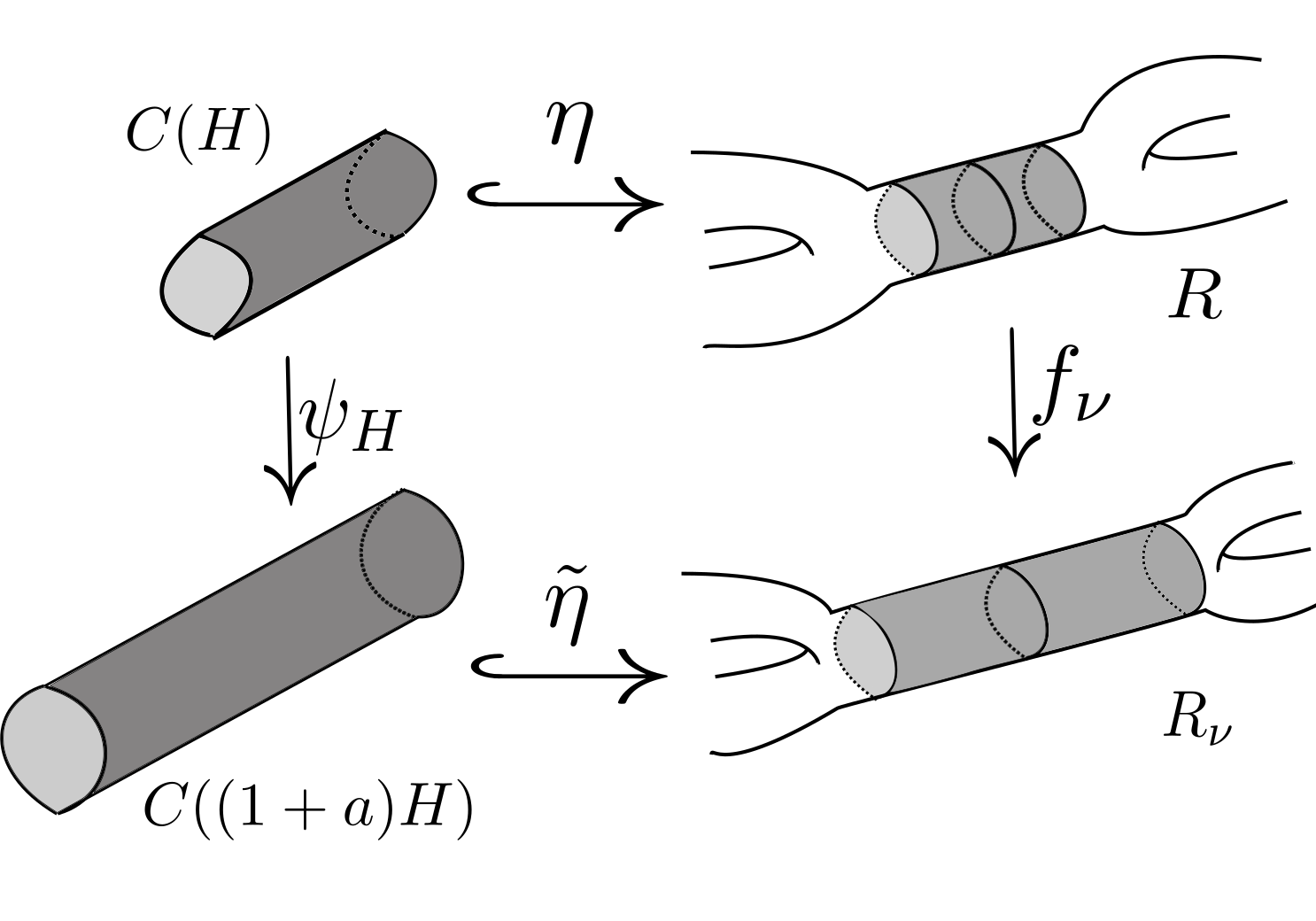}
\caption{}
\label{fig defofcly}
\end{figure}
We will verify that the evaluation of \eqref {fund.estimate.McM.} can also be applied to the appropriate cylinder region of the Riemann surface $R$. 
On Riemann surfaces, the following theorem is a way to pick out cylinders from simple closed curves. 

\begin{theo} [The collar Lemma, {\cite[Theorem 3.8.3]{H}}] \label{collar}

 	Let $R$ be a Riemann surface. 
	For an essential simple closed curve $\gamma$ in $R$, we write the geodesic whcih is homotopic to $\gamma$ 
	as $\gamma^\ast$. 
	For positive $\delta > 0$, we define the domain $A_\delta(\gamma) : = \{ x \in R \mid d(x, \gamma^\ast) < \delta\}$.
	Then, the following holds: 
	Considering a (even infinite) set of essential simple closed curves $\{ \gamma_1, \gamma_2, \cdots \}$, 
	if each geodesic is disjoint from the other, 
	then the domains $A_{h(l_i)} (\gamma_i)$ are biholomorphic to cylinders  and are disjoint, 
	where $l_i$ is the length of the geodesic $\gamma_i^\ast$ and 
	$h(x) = \frac{1}{2} \log \frac {cosh (x/2) + 1}{{cosh (x/2)} - 1}$.

\end{theo}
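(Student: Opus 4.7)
The plan is to lift to the universal cover $\mathbb{H}$ and reduce the collar statement to a two-axis distance estimate. Write $R = \mathbb{H}/\Gamma$ for the uniformising Fuchsian group. For each $i$ fix a geodesic lift $\ell_i \subset \mathbb{H}$ of $\gamma_i^\ast$ together with the primitive hyperbolic element $g_i \in \Gamma$ preserving $\ell_i$, whose translation length equals $l_i$. The preimage of $A_\delta(\gamma_i)$ under the covering projection is the $\Gamma$-orbit of the hyperbolic $\delta$-neighbourhood $N_\delta(\ell_i) \subset \mathbb{H}$, so $A_\delta(\gamma_i)$ is biholomorphic to the flat cylinder $N_\delta(\ell_i)/\langle g_i \rangle$ exactly when $N_{h(l_i)}(\ell_i) \cap f \cdot N_{h(l_j)}(\ell_j) = \emptyset$ for all $(i,j,f)$ with either $i \ne j$, or $i = j$ and $f \notin \langle g_i \rangle$. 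The lemma thus reduces to proving the distance estimate
\[
    d_\mathbb{H}(\ell_i,\, f \cdot \ell_j) \geq h(l_i) + h(l_j)
\]
for all such triples.

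The core geometric input is the classical two-axis inequality. Since the $\gamma_i^\ast$ are simple and pairwise disjoint on $R$, any two distinct axes in the $\Gamma$-orbit of $\{\ell_i\}$ are disjoint geodesics in $\mathbb{H}$: crossing lifts would project to a transverse intersection on $R$. For two primitive hyperbolic elements $g, g' \in \mathrm{PSL}(2, \mathbb{R})$ with disjoint axes of translation lengths $l, l'$ at common perpendicular distance $d$, discreteness of $\langle g, g' \rangle$ together with the torsion-freeness inherited from $\Gamma$ forces the standard trace inequality
\[
    \sinh\!\bigl(\tfrac{d}{2}\bigr) \sinh\!\bigl(\tfrac{l}{2}\bigr) \geq 1 \quad \text{and} \quad \sinh\!\bigl(\tfrac{d}{2}\bigr) \sinh\!\bigl(\tfrac{l'}{2}\bigr) \geq 1,
\]
obtained by computing the trace of $g g'^{\pm 1}$ in $\mathrm{SL}(2,\mathbb{R})$ after normalising the two axes. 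Defining $h$ by $\sinh h(l) = 1/\sinh(l/2)$ and using $\operatorname{arcsinh} x = \log(x + \sqrt{1 + x^2})$ together with $\sinh(l/2) = \sqrt{(\cosh(l/2) + 1)(\cosh(l/2) - 1)}$ recovers the closed form
\[
    h(l) = \tfrac{1}{2}\log \frac{\cosh(l/2) + 1}{\cosh(l/2) - 1},
\]
and adding the two inequalities yields $d \geq h(l) + h(l')$ as required.

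The cylindrical identification is then explicit: normalise $\ell_i$ to the imaginary axis so that $g_i(z) = e^{l_i} z$; the neighbourhood $N_{h(l_i)}(\ell_i)$ becomes the Euclidean sector $\{z \in \mathbb{H} : |\arg z - \pi/2| < \theta_i\}$ with $\sin \theta_i = \tanh h(l_i)$, and the map $z \mapsto (\log z)/l_i$ descends to a biholomorphism with a standard flat cylinder whose modulus is determined by $\theta_i$. The passage to an infinite family introduces no new difficulty, since the disjointness obstruction acts only on pairs of axes at a time and the pairwise estimate above is independent of the cardinality of the collection. The main obstacle of the plan is the two-axis trace inequality itself: rigorously establishing $\sinh(d/2)\sinh(l/2) \geq 1$ requires a careful case analysis of the traces of $g$, $g'$ and their products, exploiting both discreteness and torsion-freeness of $\Gamma$ in order to exclude the elliptic configurations in $\langle g, g' \rangle$ which would arise if $d$ were smaller than the stated threshold.
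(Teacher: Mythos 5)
Your overall framework is fine: lifting to $\mathbb H$, reducing both the embeddedness of each collar and the mutual disjointness to the single orbit statement $d_{\mathbb H}(\ell_i, f\cdot\ell_j)\geq h(l_i)+h(l_j)$, and the explicit sector/strip uniformization of the quotient cylinder are all standard and correct. (For comparison: the paper offers no proof at all here — it imports the statement from Hubbard's book — so there is no "paper route" to measure you against; your argument has to stand on its own.) The problem is that the key inequality you propose to prove is false. The pair of bounds $\sinh(d/2)\sinh(l/2)\geq 1$ and $\sinh(d/2)\sinh(l'/2)\geq 1$ is equivalent to $d\geq 2h(l)$ \emph{and} $d\geq 2h(l')$, i.e.\ $d\geq 2\max(h(l),h(l'))$, which is strictly stronger than the collar bound $d\geq h(l)+h(l')$ as soon as $l\neq l'$ — and it fails. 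Concretely, in a pair of pants $P(l_1,l_2,l_3)$ the hexagon formula gives
\[
\cosh d(\gamma_1,\gamma_2)=\frac{\cosh(l_3/2)+\cosh(l_1/2)\cosh(l_2/2)}{\sinh(l_1/2)\sinh(l_2/2)},
\]
and as $l_3\to 0$ this tends exactly to $\cosh\bigl(h(l_1)+h(l_2)\bigr)$; taking $l_1=0.1$, $l_2=2$, $l_3=0.01$ (embedded in any closed surface) one gets $d\approx 4.46$ while $2h(l_1)\approx 7.38$, so $\sinh(d/2)\sinh(l_1/2)\approx 0.23<1$. Thus the lemma you flag as "the main obstacle" is not merely hard to prove — it is not true, and no amount of trace bookkeeping for $gg'^{\pm 1}$ will produce it.

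What is true, and what you should prove instead for the case $i\neq j$, is precisely the hexagon inequality
\[
\cosh d\,\sinh(l/2)\sinh(l'/2)\;\geq\;1+\cosh(l/2)\cosh(l'/2)\;=\;\cosh\bigl(h(l)+h(l')\bigr)\,\sinh(l/2)\sinh(l'/2),
\]
obtained by taking the common perpendicular between the two disjoint axes, forming the right-angled hexagon with alternate sides $l/2$, $d$, $l'/2$, and using that the opposite side has $\cosh\geq 1$; this is exactly equivalent to $d\geq h(l)+h(l')$ and is sharp. Your inequality $\sinh(d/2)\sinh(l/2)\geq 1$ is the correct statement only in the equal-length case $i=j$, $f\notin\langle g_i\rangle$, where both lifts carry the same translation length $l$ (there the hexagon bound specializes to $\cosh d\geq 1+2/\sinh^2(l/2)=\cosh(2h(l))$). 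A second, lesser issue: even the true inequalities cannot follow from "discreteness and torsion-freeness" alone, since hyperbolic elements of a Fuchsian group whose axes project to intersecting or non-simple geodesics violate any such bound; the simplicity and disjointness of the projected curves must enter the argument (they do, via the perpendicular-arc construction), so the sentence deriving the bound from discreteness plus a trace computation would need to be replaced wholesale rather than merely tightened.
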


\begin{Rem}
 
 	The height of $A_{h(l_i)} (\gamma_i)$ is  $h(l_i)$. The function $h$ is monotone-decreasing such that 
	$h(l) \to \infty$ as $l \to 0$. 
 
\end{Rem}

Let $\gamma^\ast$ be an essential simple closed geodesic in $R$. 
The height of the cylinder must be large enough for our purpose. It means that the length $l$ of $\gamma^\ast$ must be small enough. 
We assume that there exists an essential simple closed geodesic $\gamma^\ast$ such that the length $l$ of $\gamma^\ast$
satisfies $h(l) > 2 \sqrt 2 \pi^2$.  
Then, the injectivity radius of $A_{h(l)/2} (\gamma^\ast)$ with respect to $\rho_{A, l}$ is less than $1/2$. 
We define $\Cyl : = A_{h(l)} (\gamma^\ast)$ and $\rho^0 : = \rho_{A, l}$. 
Moreover, we take a standard holomorphic embedding $\eta: C(H) \to \Cyl$.
See Figure \ref{fig defofcly}. 
Define the quasiconformal deformation of $R$ determined by $\Cyl$, $H$, and $\eta$. 
Using this map $\eta$, we can define the quasiconformal deformation constructed by (\ref{def: quasiconformal deformation cylinder}) in the previous subsection on $R$. More precisely, the quasiconformal deformation can be determined by the following Beltrami coefficients \eqref{def: quasiconformal deformation cylinder} and the homotopy from 
\eqref{def: quasiconformal homotopy cylinder}: 
	\[
		\nu: = \begin{cases}
			\eta_{\ast}(\bel(\psi_H)) & \text{on} \ \Cyl \\
			0 & \text{on} \ R \setminus \Cyl
		\end{cases}, \ \ \ 
		\nu_t : = \begin{cases}
			\eta_{\ast}(\bel(\psi_{H, t })) & \text{on} \ \Cyl \\
			0 & \text{on} \ R \setminus \Cyl
		\end{cases}. 
	\]
Note that this quasiconformal deformation depends not only on $\Cyl$ but also on $\eta: C(H) \to \Cyl$. Let us write $f_\nu$ and $f_{\nu_t}$ for the quasiconformal maps determined from $\nu$ and $\nu_t$, respectively. We also write $\rho_{R_{\nu_t}}$ and $\rho_{R_\nu}$ as hyperbolic measures of $R_\nu : = f_\nu(R)$ and $R_{\nu_t} : = f_{\nu_t} (R)$, respectively. 

\begin{claim}\label{standard embedding}

	There exists a holomorphic embedding $\tilde \eta: C(3H/2) \to f_\nu (\Cyl)$. 
	Similarly, for each $t \in [ 0 , 1 ]$, there exists a holomorphic embedding $\tilde \eta_t: C( H + t H/2 ) \to f_{\nu_t} (\Cyl)$. 

\end{claim}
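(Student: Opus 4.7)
The plan is to define
\[
\tilde\eta := f_\nu \circ \eta \circ \psi_H^{-1}: C(3H/2) \to f_\nu(\Cyl),
\]
and, for each $t \in [0,1]$,
\[
\tilde\eta_t := f_{\nu_t} \circ \eta \circ \psi_{H,t}^{-1}: C(H + tH/2) \to f_{\nu_t}(\Cyl),
\]
and to verify in each case that the composition is a holomorphic embedding. Well-definedness is immediate from the definitions: by \eqref{def: quasiconformal deformation cylinder} and \eqref{def: quasiconformal homotopy cylinder}, $\psi_H$ and $\psi_{H,t}$ are explicit homeomorphisms $C(H) \to C(3H/2)$ and $C(H) \to C(H + tH/2)$ respectively, so $\psi_H^{-1}$ and $\psi_{H,t}^{-1}$ land in $C(H)$; then $\eta$ maps into $\Cyl$, and $f_\nu$ (resp.\ $f_{\nu_t}$) maps into $f_\nu(\Cyl)$ (resp.\ $f_{\nu_t}(\Cyl)$). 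Injectivity is automatic as a composition of injective maps.

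The only substantive step is to identify the Beltrami coefficients. Since $\eta$ is holomorphic, the chain rule for complex dilatations gives on $C(H)$
\[
\bel(f_\nu \circ \eta) = \eta^{\ast}\bigl(\bel(f_\nu)|_{\Cyl}\bigr) = \eta^{\ast}(\nu) = \bel(\psi_H),
\]
where the last equality uses that $\nu = \eta_{\ast}(\bel(\psi_H))$ by the definition of $\nu$, together with the fact that $\eta$ is a holomorphic embedding and hence $\eta^{\ast} \circ \eta_{\ast} = \id$ on Beltrami differentials supported in $\eta(C(H))$. Thus $f_\nu \circ \eta$ and $\psi_H$ are two quasiconformal maps on $C(H)$ with the same Beltrami coefficient, which forces their compositional difference $(f_\nu \circ \eta) \circ \psi_H^{-1}$ to have vanishing Beltrami coefficient on $C(3H/2)$, and hence to be conformal. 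The identical argument, with $\nu$ replaced by $\nu_t$ and $\psi_H$ by $\psi_{H,t}$, handles $\tilde\eta_t$.

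There is no serious obstacle here: the whole claim reduces to the fact that $\nu$ and $\nu_t$ were defined precisely as push-forwards under the holomorphic embedding $\eta$ of the Beltrami coefficients of $\psi_H$ and $\psi_{H,t}$, combined with the general principle that two quasiconformal maps sharing a Beltrami coefficient differ by a conformal change of coordinates in the target. The only point meriting explicit mention is the verification that $\psi_H$ and $\psi_{H,t}$ are global homeomorphisms from $C(H)$ onto their stated target cylinders, which is transparent from the piecewise-affine formulas in \eqref{def: quasiconformal deformation cylinder} and \eqref{def: quasiconformal homotopy cylinder}.
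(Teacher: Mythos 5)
Your proposal is correct and follows the same route as the paper: the paper defines the identical map $\tilde\eta_t = f_{\nu_t}\circ\eta\circ(\psi_{H,t})^{-1}$ and asserts holomorphy "from the definition of $f_{\nu_t}$," which is precisely the Beltrami-coefficient cancellation you spell out. Your version merely makes explicit the computation $\bel(f_{\nu_t}\circ\eta)=\eta^{\ast}(\nu_t)=\bel(\psi_{H,t})$ that the paper leaves implicit.
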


\begin{proof}

 	We can define the map $\tilde \eta: C(H + t H/2 ) \to f_{\nu_t} (\Cyl)$ as follows: 
		\[
			\tilde \eta_t (z) : = f_{\nu_t} \circ \eta \circ (\psi_{H, t})^{-1} (z). 
		\] 
	In addition, this map is holomorphic can be seen from the definition of the map $f_{\nu_t}$.

\end{proof}

\begin{claim} \label{infinitesimal Bel differntial for single}

	\[
		\left|  \left. \frac {d} {dh} \bel( f_{\nu_{ t + h}} \circ f^{-1}_{\nu_t} ) \right|_{h = 0} \right| 
		= \left| \left. \frac{d}{dh} \bel ( \psi_{H, t+h} \circ \psi_{H, t}^{-1} ) \right|_{h = 0}  \right|. 
	\]
	In particular, the left side of this equation does not depend on the height $H$ of the cylinder. 

\end{claim}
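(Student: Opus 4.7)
The key observation is that both $\bel(f_{\nu_{t+h}}\circ f_{\nu_t}^{-1})$ and $\bel(\psi_{H,t+h}\circ\psi_{H,t}^{-1})$ are supported on the respective cylinder regions, and that on these cylinders the two deformations are conformally conjugate via the standard holomorphic embeddings $\eta$ and $\tilde\eta_t$ supplied by Claim \ref{standard embedding}. Since the chain rule for Beltrami coefficients tells us that holomorphic pre- and post-composition alters $\bel$ only by a unit-modulus factor, the absolute values coincide.

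The first step is to factor $f_{\nu_t}$ on $\Cyl$. By the construction of $\tilde\eta_t$ in Claim \ref{standard embedding}, one has $\tilde\eta_t = f_{\nu_t}\circ\eta\circ\psi_{H,t}^{-1}$, equivalently
\[
 f_{\nu_t} = \tilde\eta_t\circ\psi_{H,t}\circ\eta^{-1}\qquad\text{on }\Cyl.
\]
Combining this with the analogous identity at time $t+h$ gives, on $f_{\nu_t}(\Cyl)$,
\[
 f_{\nu_{t+h}}\circ f_{\nu_t}^{-1} \;=\; \tilde\eta_{t+h}\circ\bigl(\psi_{H,t+h}\circ\psi_{H,t}^{-1}\bigr)\circ\tilde\eta_t^{-1}.
\]
Outside $f_{\nu_t}(\Cyl)$ both sides of the claimed equation vanish identically: the Beltrami coefficients $\nu_t,\nu_{t+h}$ are zero on $R\setminus\Cyl$, so $f_{\nu_{t+h}}\circ f_{\nu_t}^{-1}$ is conformal there, matching the fact that $\psi_{H,t+h}\circ\psi_{H,t}^{-1}$ is the identity outside the corresponding sub-strip.

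On the cylinder region, set $\Phi_h:=\psi_{H,t+h}\circ\psi_{H,t}^{-1}$ and $G_h:=\tilde\eta_{t+h}$, and write $g:=\tilde\eta_t^{-1}$. Both $G_h$ and $g$ are holomorphic, so the chain rule for Beltrami coefficients gives
\[
 \bel\bigl(G_h\circ\Phi_h\circ g\bigr)(w) \;=\; \bel(\Phi_h)\bigl(g(w)\bigr)\cdot\frac{\overline{g'(w)}}{g'(w)},
\]
and in particular $\bigl|\bel(f_{\nu_{t+h}}\circ f_{\nu_t}^{-1})(w)\bigr|=\bigl|\bel(\Phi_h)(g(w))\bigr|$ pointwise. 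Differentiating in $h$ at $h=0$, where $\Phi_0=\id$ and $g$ does not depend on $h$, yields
\[
 \left|\left.\tfrac{d}{dh}\bel\bigl(f_{\nu_{t+h}}\circ f_{\nu_t}^{-1}\bigr)\right|_{h=0}\!(w)\right|
 \;=\; \left|\left.\tfrac{d}{dh}\bel(\Phi_h)\right|_{h=0}\!(g(w))\right|,
\]
which is exactly the stated equality under the conformal identification $\tilde\eta_t\colon C(H+tH/2)\to f_{\nu_t}(\Cyl)$. Independence of $H$ then follows immediately from \eqref{the infinitesimal bel differntial of stretch def}, whose right-hand side involves neither $H$ nor $a$.

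The only point requiring any care, rather than a genuine obstacle, is bookkeeping: one must consistently regard $f_{\nu_t}$ as a deformation map whose Beltrami coefficient is transported to $\Cyl$ via $\eta$, and then recognize the factorization from Claim \ref{standard embedding} as exactly what is needed to reduce the computation on $R$ to the planar computation \eqref{the infinitesimal bel differntial of stretch def}.
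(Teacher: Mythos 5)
Your proposal is correct and follows essentially the same route as the paper: factor $f_{\nu_{t+h}}\circ f_{\nu_t}^{-1}$ as $\tilde\eta_{t+h}\circ(\psi_{H,t+h}\circ\psi_{H,t}^{-1})\circ\tilde\eta_t^{-1}$ via Claim \ref{standard embedding}, apply the transformation rule for Beltrami coefficients under holomorphic conjugation (which contributes only the unit-modulus factor $\overline{\tilde\eta_t'}/\tilde\eta_t'$), and read off $H$-independence from \eqref{the infinitesimal bel differntial of stretch def}. The extra bookkeeping you supply (the vanishing outside the cylinder and the explicit differentiation in $h$) is consistent with, and slightly more detailed than, the paper's two-line computation.
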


\begin{proof}

 	From Claim \ref {standard embedding}, we get 
		\begin{equation}\label{cal. bel of qc}
		\begin{split}
			\bel( f_{\nu_{ t + h}} \circ f^{-1}_{\nu_t} ) 
			&= \bel (\tilde \eta_{t+h} \circ \psi_{H, t+h} \circ \eta ^{-1} \circ \eta \circ \psi_{H, t}^{-1} \circ \tilde \eta_t) \\
			&= \left( \frac{| \tilde \eta_t ' | } {\tilde \eta_t '} \right)^2 \bel ( \psi_{H, t+h} \circ \psi_{H, t}^{-1}) \circ \tilde \eta_t
		\end{split}
		\end{equation}
	on $f_{\nu_t}(\Cyl)$. Moreover, from  \eqref{the infinitesimal bel differntial of stretch def}, 
	the infinitesimal Beltrami differential does not depend on the height of the cylinder.

\end{proof}

\begin{claim} \label{compare surface with model}

	The injectivity radius of $\supp \left( \left. \frac {d} {dh} \bel( f_{\nu_{ t + h}} \circ f^{-1}_{\nu_t} ) \right|_{h = 0} \right)$ 
	is less than $1/2$. 

\end{claim}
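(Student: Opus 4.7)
The plan is to transfer the injectivity-radius bound proved for the model cylinder in Proposition \ref{prop: esitimates of inj radius} to the deformed surface $R_{\nu_t}$ by using the holomorphic embedding $\tilde{\eta}_t: C((1+at)H) \hookrightarrow R_{\nu_t}$ from Claim \ref{standard embedding}, together with the monotonicity of the hyperbolic metric under holomorphic inclusions (the Schwarz--Pick lemma).

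First I would identify the support. Combining the conjugation formula \eqref{cal. bel of qc} in the proof of Claim \ref{infinitesimal Bel differntial for single} with \eqref{the infinitesimal bel differntial of stretch def}, one sees
\[
\supp\!\left(\left. \tfrac{d}{dh}\bel(f_{\nu_{t+h}} \circ f_{\nu_t}^{-1}) \right|_{h=0}\right) = \tilde{\eta}_t\bigl(C((1+t)aH)\bigr),
\]
which sits inside the larger embedded cylinder $\tilde{\eta}_t(C((1+at)H)) \subset R_{\nu_t}$. Since $R_{\nu_t}$ is hyperbolic, Schwarz--Pick yields $\tilde{\eta}_t^\ast \rho_{R_{\nu_t}} \leq \rho^t$ on $C((1+at)H)$. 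For any $p = x_0 + iy_0 \in C((1+t)aH)$, consider the horizontal loop $g_{y_0}: x \mapsto x + iy_0$, which is non-contractible in $C((1+at)H)$. With $a = 1/2$ and $H > 2\sqrt{2}\pi^2$, the computation carried out in the proof of Proposition \ref{prop: esitimates of inj radius} shows $\length_{\rho^t}(g_{y_0}) \leq \sqrt{2}\pi^2 / H < 1/2$ for every $t \in [0,1]$, so by the metric comparison the image loop $\tilde{\eta}_t \circ g_{y_0}$ satisfies $\length_{\rho_{R_{\nu_t}}}(\tilde{\eta}_t \circ g_{y_0}) < 1/2$.

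Finally, the loop $\tilde{\eta}_t \circ g_{y_0}$ is non-contractible in $R_{\nu_t}$, being freely homotopic, within the embedded cylinder, to the core curve $f_{\nu_t}(\gamma^\ast)$, which is the quasiconformal image of an essential simple closed geodesic of $R$ and is therefore essential. Following the convention of Proposition \ref{prop: esitimates of inj radius} we conclude
\[
r_{\rho_{R_{\nu_t}}}\!\bigl(\tilde{\eta}_t(p)\bigr) \leq \length_{\rho_{R_{\nu_t}}}(\tilde{\eta}_t \circ g_{y_0}) < 1/2,
\]
and taking the supremum over $p \in C((1+t)aH)$ proves the claim. The only subtle step is the non-contractibility assertion, which relies on $\eta$ in Subsection \ref{subsec: def cylinder on R} being a \emph{standard} embedding whose core curve is homotopic to $\gamma^\ast$; the rest is a direct application of Schwarz--Pick to the model-side estimate from Proposition \ref{prop: esitimates of inj radius}.
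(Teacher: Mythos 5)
Your proof follows essentially the same route as the paper's: identify the support as $\tilde \eta_t\bigl(C((1+t)aH)\bigr)$ via the conjugation formula \eqref{cal. bel of qc}, then transfer the bound \eqref{prop-eq cal inj rad} of Proposition \ref{prop: esitimates of inj radius} through the holomorphic embedding $\tilde\eta_t$ by Schwarz--Pick. The only difference is that you make explicit the non-contractibility of the image of the horizontal loop in $R_{\nu_t}$, a point the paper leaves implicit in its one-line appeal to Schwarz--Pick; this is a welcome clarification rather than a different argument.
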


\begin{proof}

 	First, from \eqref{cal. bel of qc}, we get
		\begin{align*}
			\supp\left( \left. \frac {d} {dh} \bel( f_{\nu_{ t + h}} \circ f^{-1}_{\nu_t} ) \right|_{h = 0} \right)
			& = 
			\tilde \eta_t \left( \supp \left( \left. \frac{d} {dh} \bel(\psi_{H, t + h} \circ \psi^{-1}_{H, t}) \right|_{h=0} \right) \right)\\
			& = \tilde \eta_t ( \psi_{H, t} (C (H/2) ) ) =  \tilde \eta_t ( C ((1+t)H / 2) ) .
		\end{align*}
	For each $x \in \tilde \eta_t ( C ((1+t)H / 2) ) $, from Schwarz--Pick's lemma, 
		\begin{equation} \label{inj rad est on model} 
			r_{\rho_{R_{\nu_t}}} ( x ) \leq r_{\rho^t} ( \tilde \eta_t^{-1} ( x ) ) 
			\leq \sup_{p \in ((1+t)H/2) } r_{\rho^t} ( p ) \leq \frac{1} {2}. 
		\end{equation}
	The last inequality is from \eqref {prop-eq cal inj rad} in Proposition \ref{prop: esitimates of inj radius}, since $h(l) > 2 \sqrt 2 \pi^2$. 

\end{proof}
Of course, we get the same evaluation as \eqref{fund.estimate.McM.}. 
	\begin{equation} \label{fund.estimate.McM for surface}
	\begin{split}
		\| \mathcal B ( \nu_0 ) -  \mathcal B ( \nu_1 ) \|_{B(R)} & \leq
		\int_{0}^1 \left\| \frac {d} {dt} \mathcal B ( [\nu_t] ) \right\|_{B(R)} \dt \\
		& \leq   
		\int_{0}^1 \left\| d_{[\nu_t]}\mathcal B \right\|_{B(R)} 
		\left|  \left. \frac {d} {dh} \bel( f_{\nu_{ t + h}} \circ f^{-1}_{\nu_t} ) \right|_{h = 0} \right|  \dt \\
		&\leq C \left( L_0 \log \frac {1} {L_0} \right)^2 \int_0^1 \frac{1} {2+2t}  \dt, 
	\end{split}
	\end{equation}
where $L_0 : = \sup_{p \in \eta(C({H/2})) } r_{\rho_R} ( p )$, and the constant $C$ comes from Theorem \ref{McMullen result}.

\section{Construction of \SQF s} \label{sectionconstruction}

We would like to construct a topological quasi-Fuchsian b-group on the Bers boundary by repeatedly using the inequality \eqref {fund.estimate.McM for surface} obtained from McMullen's result. To do so, we 
\begin {itemize}
\item investigate whether a sequence $([\tau_n])_{n \in \mathbb N}$ given by a stretch deformation converges in the topology that defines Bers boundary in Subsection \ref {def of a family of cylinder}, 
\item construct the Beltrami differntial $\mu$ so that it satisfies the David condition in Subsection \ref {sqfconstruction},
\item show that the Schwarzian derivative $S(f_\mu)$ of the constructed David map $f_\mu$ is not an interior point in Subsection \ref {subsec: not interior}, and
\item see that it is the limit $\mu$ of the sequence of points that can be constructed by repeatedly applying the stretch deformation (slightly modifying the above sequence) in Subsection \ref {subsec: exists bers boundary}. 
\end{itemize}

\subsection{The deformation of a family of cylinders} \label {def of a family of cylinder}

Assume that $R$ has infinitely many essential closed geodesics $\{ \gamma_n^\ast\}_{n \geq 0}$ whose lengths are constant $l$ and satisfy $h(l) > 2 \sqrt 2 \pi^2$. We define $\{ \Cyl_n : = A_{h(l)} (\gamma_n^\ast ) \}_{n \geq 0}$. 
For each $n$, we fix a biholomorohic map $\eta_n$ between the n-th cylinder $\Cyl_n$ and a standard cylinder of height $H : = h(l)$. We will write the standard cylinder as $C(H)$. See Figure \ref{definioffamilycyl}. Denote by $U$ the union of the cylinders $\Cyl_n$. 

\begin{figure}[h]
\centering
\includegraphics[scale=0.4]{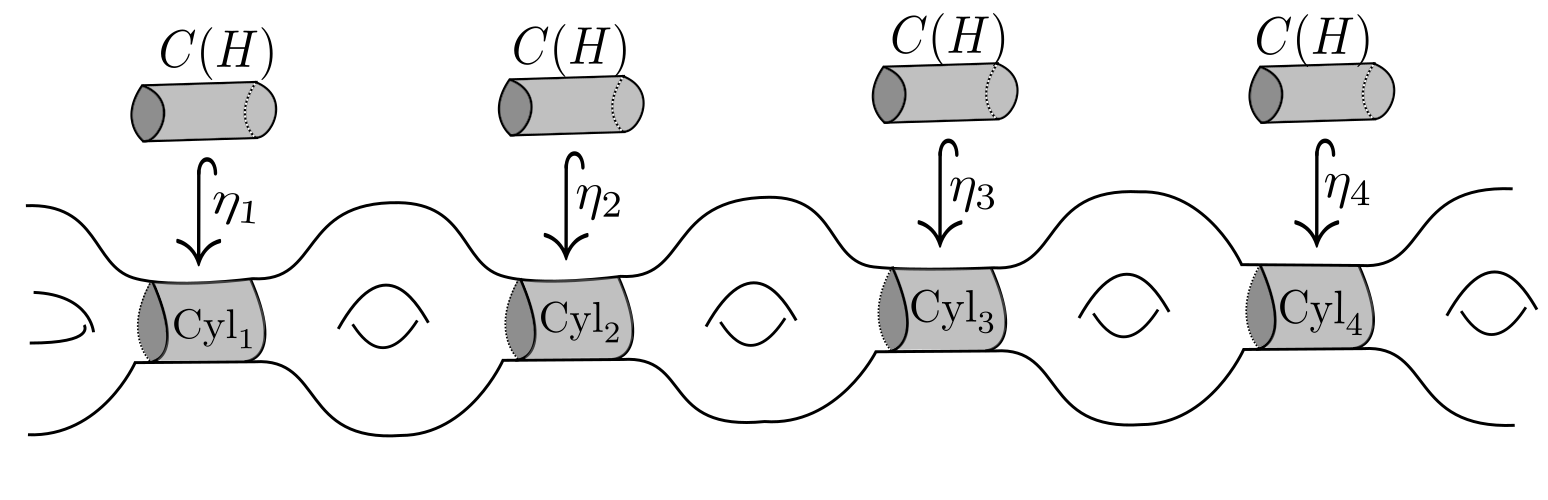}
\caption{}\label{definioffamilycyl}
\end{figure}

The sequence of quasiconformal deformation of $\{ R_j \}_{j \in \mathbb N}$ is defined inductively as follows: First, for each $\Cyl_n$, consider the quasiconformal deformation $f_t: R \to R_t$ determined by the Beltrami coefficient $\tau_t$ obtained by 
	\[
		\tau_1: = \begin{cases}
			(\eta_n)_{\ast}(\bel(\psi_{H})) & \text{on} \ \Cyl_n \\
			0 & \text{on} \ R \setminus U
		\end{cases}, \ \ \ 
		\tau_t : = \begin{cases}
			(\eta_n)_{\ast}(\bel(\psi_{H, t})) & \text{on} \ \Cyl_n \\
			0 & \text{on} \ R \setminus U. 
		\end{cases}. 
	\] 
Also, let $\Cyl_{(n,t)}: = f_t (\Cyl_n)$ for each $t \in [0, 1 ]$. As in Claim \ref{standard embedding}, the following holds. 

\begin{claim} \label{hol embedd}

	There exists a holomorphic embedding map $\eta_{n}^{(t)}$ from $\psi_{H, t}(C(H))$ to $\Cyl_{(n,t)}$. 

\end{claim}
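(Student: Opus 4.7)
The plan is to mimic the proof of Claim \ref{standard embedding} verbatim, since the present claim is the same construction carried out on the $n$-th member of the family of cylinders. The only points to check are that the definition makes sense cylinder by cylinder and that the Beltrami coefficient of the candidate map vanishes.

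\textbf{Step 1: Define the map.} Set
\[
    \eta_n^{(t)}(z) \; := \; f_t \circ \eta_n \circ (\psi_{H,t})^{-1}(z), \qquad z \in \psi_{H,t}(C(H)).
\]
Tracking the domains, $(\psi_{H,t})^{-1}$ sends $\psi_{H,t}(C(H))$ biholomorphically onto $C(H)$, $\eta_n$ sends $C(H)$ biholomorphically onto $\Cyl_n$, and $f_t$ sends $\Cyl_n$ homeomorphically onto $f_t(\Cyl_n) = \Cyl_{(n,t)}$. Hence $\eta_n^{(t)}$ is a homeomorphism from $\psi_{H,t}(C(H))$ onto $\Cyl_{(n,t)}$, and in particular an embedding.

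\textbf{Step 2: Verify holomorphicity via the Beltrami chain rule.} By the definition of $\tau_t$, its restriction to $\Cyl_n$ is exactly $(\eta_n)_\ast \bel(\psi_{H,t})$. Thus $f_t \circ \eta_n: C(H) \to \Cyl_{(n,t)}$ has Beltrami coefficient equal to $\bel(\psi_{H,t})$. Postcomposing with the inverse $(\psi_{H,t})^{-1}$ (which shares the same Beltrami coefficient on $C(H)$ after the standard pullback), the chain rule for complex dilatations forces the Beltrami coefficient of $\eta_n^{(t)}$ to vanish a.e.\ on $\psi_{H,t}(C(H))$. Since $\eta_n^{(t)}$ is a homeomorphism in $W^{1,2}_{\loc}$ with vanishing $\mu$, it is holomorphic.

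\textbf{Expected obstacle.} There is essentially no difficulty here: the claim is a direct copy of Claim \ref{standard embedding} applied to each cylinder $\Cyl_n$ independently, relying on the fact that on $R \setminus U$ the deformation is conformal and that $\tau_t$ on $\Cyl_n$ is precisely the pushforward of $\bel(\psi_{H,t})$ under $\eta_n$. The only mild subtlety is book-keeping, namely observing that $\psi_{H,t}(C(H)) = C((1+t/2)H)$ (with $a=1/2$ as fixed earlier) so that the claim matches the statement and parametrization of Claim \ref{standard embedding} in a form suitable for iterated use in the subsequent convergence argument of Subsection \ref{def of a family of cylinder}.
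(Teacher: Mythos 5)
Your proposal is correct and follows the paper's own proof essentially verbatim: the paper defines the same map $\eta_n^{(t)} = f_t \circ \eta_n \circ (\psi_{H,t})^{-1}$ and asserts holomorphicity from the definitions of $\psi_{H,t}$ and $f_t$, which is exactly the Beltrami chain-rule cancellation you spell out in Step 2. The only nitpick is terminological: you \emph{precompose} $f_t \circ \eta_n$ with $(\psi_{H,t})^{-1}$ rather than postcompose, but the cancellation of dilatations you invoke is the right (and the paper's implicit) argument.
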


\begin{proof}

 	We define 
		\[
			\eta^{(t)}_{n} = f_t \circ \eta_n \circ \psi_{H, t}^{-1}, 
		\]
	on $\psi_{H, t}(C(H))$. 
	By $\psi_{H, t}$ in (\ref{def: quasiconformal deformation cylinder}) and $f_t$, the function $\eta^{(t)}_{n}$ is holomorphic.

\end{proof}

In the same way as Claim \ref{compare surface with model}, the following can be proven. 

\begin{claim} \label{compare surface with model for infinite}

	The injectivity radius of each component of 
	$\supp \left( \left. \frac {d} {dh} \bel( f_{ t + h} \circ f^{-1}_t \right|_{h = 0} ) \right)$ is less than $1/2$. 

\end{claim}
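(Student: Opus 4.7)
The plan is to mirror the argument of Claim \ref{compare surface with model} verbatim on each cylinder, exploiting the fact that the construction of $\tau_t$ is disjointly supported on the collection $\{\Cyl_n\}_{n \geq 0}$, so that the global statement reduces to a pointwise statement on each component.

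First, I would identify the support. By the definition of $\tau_t$, the Beltrami coefficient is the pushforward of $\bel(\psi_{H,t})$ by $\eta_n$ on each $\Cyl_n$ and is zero off $U = \bigcup_n \Cyl_n$. Consequently, the computation \eqref{cal. bel of qc} in the proof of Claim \ref{infinitesimal Bel differntial for single} goes through on each cylinder separately, replacing $\tilde\eta_t$ by the holomorphic embedding $\eta_n^{(t)}$ produced in Claim \ref{hol embedd}. This yields
\[
    \supp\left( \left. \frac{d}{dh} \bel\bigl( f_{t+h} \circ f_t^{-1} \bigr) \right|_{h=0} \right) \cap \Cyl_{(n,t)} = \eta_n^{(t)}\bigl( C((1+t)H/2) \bigr),
\]
and the connected components are indexed by $n$ (the $\Cyl_{(n,t)}$ being pairwise disjoint since the $\gamma_n^\ast$ are disjoint essential simple closed geodesics and the collar lemma \ref{collar} guarantees disjointness of the collars $A_{h(l)}(\gamma_n^\ast)$).

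Next, on a fixed component $\eta_n^{(t)}(C((1+t)H/2))$, I would run the Schwarz--Pick estimate exactly as in \eqref{inj rad est on model}. Since $\eta_n^{(t)}$ is a holomorphic embedding from $\psi_{H,t}(C(H))$ into $R_t$, the hyperbolic metric of $R_t$ is dominated by the pullback of $\rho^t$; hence for each $x \in \eta_n^{(t)}(C((1+t)H/2))$,
\[
    r_{\rho_{R_t}}(x) \leq r_{\rho^t}\bigl( (\eta_n^{(t)})^{-1}(x) \bigr) \leq \sup_{p \in C((1+t)H/2)} r_{\rho^t}(p) \leq \frac{1}{2},
\]
where the final inequality is \eqref{prop-eq cal inj rad} in Proposition \ref{prop: esitimates of inj radius}, applicable because the assumption $h(l) > 2\sqrt{2}\pi^2$ holds uniformly in $n$.

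I do not expect a genuine obstacle: the only point that deserves care is that the estimate is uniform in $n$, which is immediate here since the height $H = h(l)$ and the embeddings $\eta_n$ land in model cylinders of the same height, so the same Schwarz--Pick comparison applies on every component with the same numerical bound $1/2$. The disjointness of the collars (needed to conclude that the components of the support are indeed these $\eta_n^{(t)}(C((1+t)H/2))$ rather than overlapping pieces) is handled by the collar lemma under the standing hypothesis that $\{\gamma_n^\ast\}$ are disjoint essential simple closed geodesics.
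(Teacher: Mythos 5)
Your proposal is correct and follows essentially the same route as the paper: identify the support as $\bigcup_n \eta_n^{(t)}(\psi_{H,t}(C(H/2))) = \bigcup_n \eta_n^{(t)}(C((1+t)H/2))$ and apply the Schwarz--Pick estimate of \eqref{inj rad est on model} on each component, invoking \eqref{prop-eq cal inj rad} from Proposition \ref{prop: esitimates of inj radius}. The extra remarks on disjointness of the collars and uniformity in $n$ are consistent with (and slightly more explicit than) the paper's argument.
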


\begin{proof}

 	First, 
		\begin{align*}
			\supp\left( \left. \frac {d} {dh} \bel( f_{t + h} \circ f^{-1}_t ) \right|_{h = 0} \right)
			& = 
			\bigcup_{n \geq 0} \eta_n ^{(t)} 
			\left( \supp \left( \left. \frac{d} {dt} \bel(\psi_{H, t + h} \circ \psi^{-1}_{H, t}) \right|_{h=0} \right) \right)\\
			& = \bigcup_{n \geq 0} \eta_n^{(t)} ( \psi_{H, t} (C (H/2))). 
		\end{align*}

	In the same manner as ( \ref {inj rad est on model} ), 
	for each $x \in \eta_n^{(t)} ( \psi_{H, t} (C (H/2)))$, from Schwarz--Pick's lemma, 
		\[
			r_{\rho_{R_{\tau_t}}} ( x ) \leq r_{\rho^t} ( (\eta_n^{(t)} )^{-1} ( x ) ) 
			\leq \sup_{p \in ((1+t)H/2) } r_{\rho^t} ( p ) \leq \frac{1} {2}. 
		\]
	The last inequality is from \eqref {prop-eq cal inj rad} in Proposition \ref{prop: esitimates of inj radius}, since $h(l) > 2 \sqrt 2 \pi^2$.

\end{proof}

\begin{figure}[h]
\centering
\includegraphics[width=100mm]{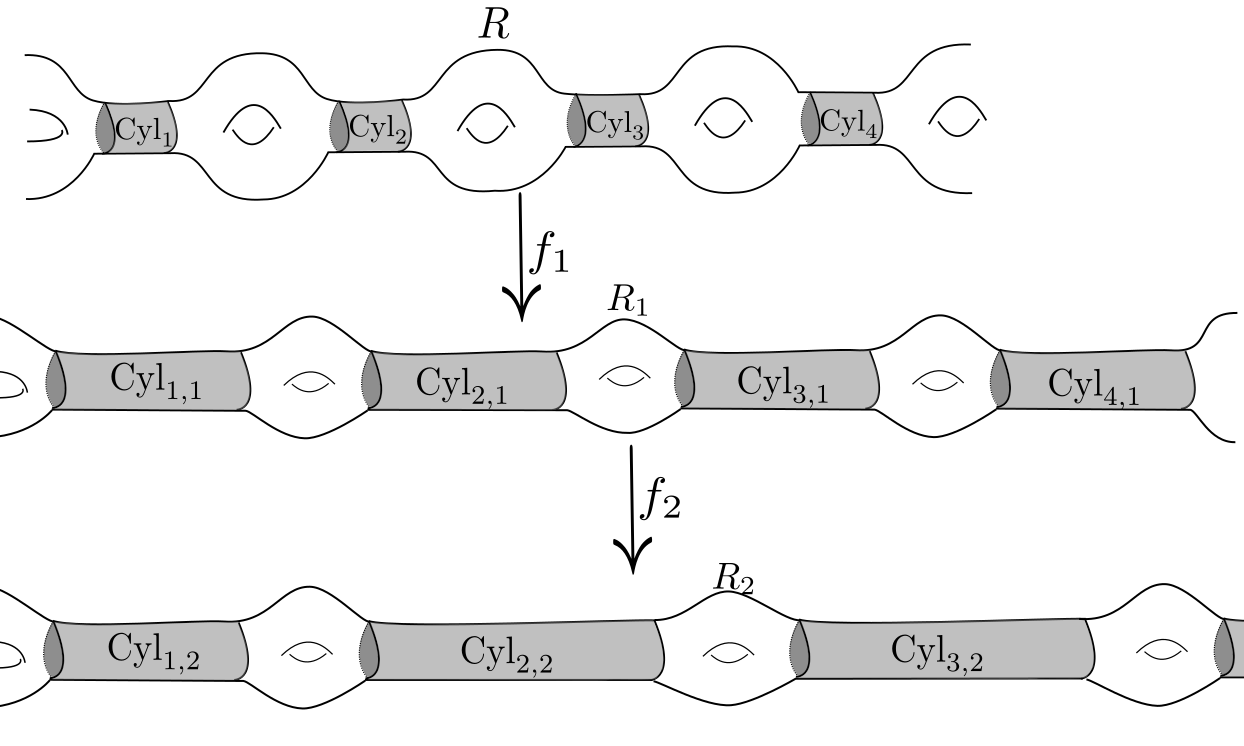}
\caption{}\label{defbydef}
\end{figure}

Next, we construct the $j$-th transformation. Consider a deformation of the standard cylinder $C(H)$ repeated $j$ times. In other words, using the notation from Subsection \ref{deform cyl}, we take the quasiconformal map 
	\begin{equation} \label{standard iterated deformation}
		\psi^{(j)} : = \psi_{(3/2)^j H} \circ \cdots \circ \psi_{3H/2} \circ \psi_{H}. 
	\end{equation}
In addition, we denote the injectivity radius of the support of $\bel (\psi_{(3/2)^j H})$ as $L'_j$. From \eqref{prop-eq rat inj rad} in Proposition \ref {prop: esitimates of inj radius}, for each $j \geq 0$, we get
	\begin{equation} \label{unif ratio}
		L'_j \leq \frac{2 \sqrt 2} {3} L'_{j+1}. 
	\end{equation}
Using the quasiconformal map $\psi^{(j)}$, we define 
	\begin{equation}\label{iterated deformation}
		\tau_j: = \begin{cases}
			(\eta_n)_{\ast}(\bel(\psi^{(j)})) & \text{on} \ \Cyl_n \ ( n \geq j ) \\
			0 & \text{on} \ R \setminus \bigcup_{n \geq j} \Cyl_n
		\end{cases}. 
	\end{equation}
We write the quasiconformal map determined by $\tau_j$ as $f_j: R \to R_j$, the image of $\Cyl_n$ by $f_j$ as $\Cyl_{(n , j )}$, and the injectivity radius of the support of $\tau_j$ as $L_j$ . See Figure \ref{defbydef}. 
Moreover, for each $t \in [ j-1 , j]$, we define
	\begin{equation} \label{model quasi def}
	\begin{split}
		\psi^{(t)} &: = \psi_{(3/2)^j H, t - (j-1)} \circ \psi_{(3/2)^{j-1}H}  \circ \cdots \circ \psi_{3H/2} \circ \psi_{H} \\
		& = \psi_{(3/2)^j H, t-(j -1)} \circ \psi^{(j-1)}
	\end{split}
	\end{equation}
and
	\begin{equation}\label{iterated deformation}
		\tau_t: = \begin{cases}
			(\eta_n)_{\ast}(\bel(\psi^{(t)})) & \text{on} \ \Cyl_n \ ( n \geq j ) \\
			0 & \text{on} \ R \setminus \bigcup_{n \geq j} \Cyl_n
		\end{cases}. 
	\end{equation}
Similarly, we define $R_t$, $f_t$, and $\Cyl_{(n , t)}$ for $\tau_t$. 
At this time, the same thing as Claim \ref{hol embedd} holds. 

\begin{claim}

	Let $j \geq 0$. For each $0 \leq t \leq j$ and $n \geq j$,  
	there exists a holomorphic embedding map $\eta^{(t)}_n$ from $\psi^{(t)}(C(H))$ to $\Cyl_{(n,t)}$. 

\end{claim}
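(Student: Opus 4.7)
The plan is to mimic the proof of Claim~\ref{hol embedd} but with the iterated stretch $\psi^{(t)}$ in place of the single-step $\psi_{H,t}$. Concretely, I would define
\[
\eta_n^{(t)} := f_t \circ \eta_n \circ (\psi^{(t)})^{-1}
\]
on the open set $\psi^{(t)}(C(H))$, and then verify that this composition is holomorphic (it is manifestly a homeomorphism onto $\Cyl_{(n,t)} = f_t(\Cyl_n)$, by injectivity of each factor). So the only real content is holomorphy.

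The key computation is the same chain-rule identity that drove Claim~\ref{hol embedd}. Since $n \geq j$, the Beltrami coefficient of $f_t$ restricted to $\Cyl_n$ is by definition $\tau_t|_{\Cyl_n} = (\eta_n)_\ast(\bel(\psi^{(t)}))$, where $\psi^{(t)}$ is the iterated stretch given in \eqref{model quasi def}. Because $\eta_n : C(H) \to \Cyl_n$ is holomorphic, the chain rule gives
\[
\bel(f_t \circ \eta_n)(z) \;=\; \bel(f_t)(\eta_n(z)) \cdot \frac{\overline{\eta_n'(z)}}{\eta_n'(z)} \;=\; \bel(\psi^{(t)})(z)
\]
for almost every $z \in C(H)$. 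Thus $f_t \circ \eta_n$ and $\psi^{(t)}$ share the same Beltrami coefficient on $C(H)$, so their composition $\eta_n^{(t)} = (f_t \circ \eta_n) \circ (\psi^{(t)})^{-1}$ has vanishing Beltrami coefficient on $\psi^{(t)}(C(H))$, hence is holomorphic on this open set.

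One small thing that deserves a sentence of care (and is perhaps the only point that is not immediate from the single-step argument) is that $\psi^{(t)}$ is an honest quasiconformal homeomorphism on $C(H)$ for every $t$ in the relevant range, so that $(\psi^{(t)})^{-1}$ makes sense and $\psi^{(t)}(C(H))$ is an open subset of a cylinder of larger modulus. This follows directly from the definition \eqref{model quasi def} as a finite composition of the quasiconformal maps $\psi_{(3/2)^i H}$ (for $i < j-1$) and the quasiconformal homotopy $\psi_{(3/2)^j H, t-(j-1)}$, each of which is a quasiconformal homeomorphism of the relevant stretched cylinder. I do not foresee any genuine obstacle: the statement is essentially a bookkeeping extension of Claim~\ref{hol embedd} to the iterated setting.
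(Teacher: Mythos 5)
Your proposal defines exactly the same map $\eta_n^{(t)} = f_t \circ \eta_n \circ (\psi^{(t)})^{-1}$ on $\psi^{(t)}(C(H))$ as the paper's proof, which consists of nothing more than this definition. The Beltrami-coefficient chain-rule verification you supply is the standard justification the paper leaves implicit (as it did in the single-step Claim on holomorphic embeddings), so the two arguments coincide.
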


\begin{proof}

 	We define 
		\[
			\eta^{(t)}_n = f_t \circ \eta_n \circ ( \psi^{(t)})^{-1}, 
		\]
	on $\psi^{(t)}(C(H))$.

\end{proof}

\begin{claim} \label{compare surface with model for infinite}

	The injectivity radius $L_{j-1}$ of each component of 
	$\supp \left( \left. \frac {d} {dh} \bel( f_{ t + h} \circ f^{-1}_t \right|_{h = 0} ) \right)$ 
	is less than $L'_{j-1}$, where $j$ is the nutural number so that $j - 1 \leq t \leq j$.  

\end{claim}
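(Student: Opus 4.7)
The plan is to replay the proof of its $t \in [0,1]$ analogue (the first claim labelled ``compare surface with model for infinite'' earlier in this subsection) essentially verbatim, with the outermost stretch now acting on a cylinder of height $(3/2)^j H$ and Proposition \ref{prop: esitimates of inj radius} applied at that rescaled height.

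First I would collapse the cocycle onto the outermost stretch. From the factorisation $\psi^{(t)} = \psi_{(3/2)^j H,\, t-(j-1)} \circ \psi^{(j-1)}$ on $[j-1, j]$ and the fact that $\psi^{(j-1)}$ is $t$-independent there, the inner maps cancel inside $\psi^{(t+h)} \circ (\psi^{(t)})^{-1}$, which reduces to $\psi_{(3/2)^j H,\, (t-j+1)+h} \circ \psi_{(3/2)^j H,\, t-j+1}^{-1}$ on $\psi^{(t)}(C(H))$. Hence, by \eqref{the infinitesimal bel differntial of stretch def} with $H$ replaced by $(3/2)^j H$ and $t$ replaced by $t-j+1$, the support of the infinitesimal Beltrami differential on the model is an explicit subcylinder of the outer model cylinder.

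Next I would transport this to the surface via the holomorphic embedding $\eta_n^{(t)}: \psi^{(t)}(C(H)) \to \Cyl_{(n,t)}$ supplied by the immediately preceding claim. As in the derivation of \eqref{cal. bel of qc} and \eqref{inj rad est on model}, the support on $R_t$ becomes $\bigcup_{n \geq j} \eta_n^{(t)}(\psi^{(t)}(\Omega))$ for an explicit region $\Omega \subset C(H)$, and Schwarz--Pick applied to each holomorphic embedding into the hyperbolic surface $R_t$ gives $r_{\rho_{R_t}}(\eta_n^{(t)}(z)) \leq r_{\rho^{(t)}}(z)$. The problem thus reduces to a uniform estimate of $r_{\rho^{(t)}}$ over the model support.

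Finally I would invoke Proposition \ref{prop: esitimates of inj radius} at the rescaled height $(3/2)^j H$, which still exceeds $2\sqrt 2 \pi^2$ since $H$ does. Its estimate \eqref{prop-eq cal inj rad}, combined with the ratio inequality \eqref{unif ratio}, bounds the supremum of $r_{\rho^{(t)}}$ over the support by $L'_{j-1}$ uniformly in $t \in [j-1, j]$; chaining the two inequalities completes the argument. The main obstacle is bookkeeping the scaling: one must check that the iterated inner map $\psi^{(j-1)}$ drops out cleanly from the cocycle and that the rescaled Proposition \ref{prop: esitimates of inj radius} produces a bound compatible with the definition of $L'_{j-1}$, rather than $L'_j$ or a worse iterate.
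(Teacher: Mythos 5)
Your proposal is correct and takes essentially the same route as the paper's own proof: collapse the cocycle to the outermost stretch $\psi_{(3/2)^j H,\,\cdot}$, transport the support to $R_t$ through the holomorphic embeddings $\eta_n^{(t)}$ via Schwarz--Pick, and bound the model injectivity radius over the stretched subcylinder by $L'_{j-1}$ using Proposition \ref{prop: esitimates of inj radius} at the rescaled height. Your explicit bookkeeping of the time parameter (replacing $t$ by $t-j+1$ in the outer stretch) is in fact slightly more careful than the paper's notation, which writes $\psi_{(3/2)^j H, t+h}$ without the shift.
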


\begin{proof}

 	For each $t \in [0, \infty )$, we take $j \in \mathbb N$ with $j - 1 \leq t \leq j$. 
	Then, in the same way as Claim \ref{compare surface with model for infinite}, 
	we get
		
		\begin{align*}
			\supp\left( \left. \frac {d} {dh} \bel( f_{t + h} \circ f^{-1}_t ) \right|_{h = 0} \right)
			& = 
			\bigcup_{n \geq 0} \eta_n ^{(t)} 
			\left( 
			\supp \left( \left. \frac{d} {dh} \bel(\psi_{(3/2)^j H, t + h} \circ \psi^{-1}_{(3/2)^j H, t}) \right|_{h=0} \right) \right)\\
			& = \bigcup_{n \geq 0} \eta_n^{(t)} ( \psi_{(3/2)^j H, t} (C (H/2))), 
		\end{align*}
	and, for each $x \in \eta_n^{(t)} ( \psi_{(3/2)^j H, t} (C (H/2)))$, 
		\[
			r_{\rho_{R_{\tau_t}}} ( x ) \leq r_{\rho^t} ( (\eta_n^{(t)} )^{-1} ( x ) ) 
			\leq \sup_{p \in C \left( \frac {1 + t}{2} \left(\frac{3}{2}\right)^{j-1} H \right) } r_{\rho^t} ( p ) \leq L'_{j-1}, 
		\]
	from Schwarz--Pick's lemma. Therefore $L_{j-1} \leq L'_{j-1}$

\end{proof}

To summarize the above, we can see that the sequence $(\mathcal B ( [\tau_j]))_{j \geq 0}$ converges with respect to the norm of the Banach space $B(R)$. First, for each $j \in \mathbb N$, we can see	
	\begin{align*}
		\| \mathcal B ( [ \tau_j ] ) - \mathcal B([\tau_{j-1}]) \|_{B ( R )}
		& \leq C \left( L_{j-1} \log \frac {1} {L_{j-1}} \right)^2 
		\int_{0}^1 \left|  \left.\frac {d} {dh} \bel( f_{ (j -1)+ t + h} \circ f^{-1}_{ (j -1) + t}) \right|_{h = 0} \right| dt \\
		& \leq C \left( L'_{j-1} \log \frac {1} {L'_{j-1}} \right)^2 
		\int_{0}^1 \frac {1} {2+2t} dt 
	\end{align*}
from Theorem \ref {McMullen result}, Claim \ref {infinitesimal Bel differntial for single}, and Claim \ref {compare surface with model for infinite}. 
Note that $C$ is constant depending only on $R$ and the function $s(x) = -x \log(x)$ is is monotonically decreasing in the range $x>1/e$. 
Since  $L'_j \leq 2\sqrt 2 / 3 L'_{j-1}$ and $L'_0 \leq 1/2$,  we obtain 
	\begin{equation} \label{ineq:sum}
	\begin{split}
		& \sum_{j \geq 0} \| \mathcal B ( [ \tau_j ] ) - \mathcal B([\tau_{j-1}]) \|_{B ( R )}  \leq
		\left( C \int_{0}^1 \frac {1} {2+2t} dt \right) \sum_{j \geq 0} \left( L'_{j-1} \log \frac {1} {L'_{j-1}} \right)^2 \\
		& \leq \left( C \int_{0}^1 \frac {1} {2+2t} dt \right)  
		\sum_{j \geq 0} \left( \frac{2 \sqrt 2} {3} \right)^{2j} \left( \log \left(\frac {3} {2 \sqrt 2}\right)^j \right)^{2} \\
		& \leq 
		\left( C \int_{0}^1 \frac {1} {2+2t} dt \right) \left( \log \left(\frac {3} {2 \sqrt 2}\right)\right)^{2}
		\sum_{j \geq 0}  j^2 \left( \frac{2 \sqrt 2} {3} \right)^{2j} < \infty
	\end{split}
	\end{equation}
Thus, the sequence $(\mathcal B ( [ \tau_j ] ) )_{j \in \mathbb N}$ converges. 
In the next subsection, we will discuss a modified version of this construction that provides the existence and uniqueness of solutions to the Beltrami equation corresponding to this limit, that is, the David condition in Definition \ref {def:davidcondition}.

\subsection{Construction method} \label{sqfconstruction}

As in subsection \ref{def of a family of cylinder}, we assume the existence of infinitely many essential closed geodesics $\{ \gamma_n^\ast\}_{n \geq 0}$ whose lengths are constant $l$ and satisfy $h(l) >2 \sqrt 2 \pi^2$. 
And, let $\Gamma$ be a Fuchsian model of $R$ acting $\mathbb D$ with an universal covering map $\pi : \mathbb D \to R$
and $\omega$ be a fundamental domain of $\Gamma$ in $\mathbb D$. 
For every $n$, we can find a doubly connected domain $\Cyl_n$ of $R$ containing $\gamma_n^\ast$ such that 
$\Cl ( \Cyl_n )$ is compact in $R$ and $n \neq m$ implies that $\Cl ( \Cyl_n ) \cap \Cl ( \Cyl_m ) = \emptyset$. 
For example, we can take the collar of $\gamma_n^\ast$. 
We denote by $T_m = \pi^{-1} (\Cyl_m) \cap \omega$. 
Let $C(H)$ be a standard cylinder that is biholomorphic to $\Cyl_n$, and let the holomorphic $\eta_n: C{(H)} \to \Cyl_n$ be fixed.

Take a positive sequence $( p_j )_{j \in \mathbb N}$ such that $\sum_{j \in \mathbb N} p_j < \infty$. 

	\begin{lem} \label { Area is small enough }

	 	For any $\varepsilon > 0$, there exists an $N \in \mathbb N$ such that 
		$\left| \bigcup_{m > N} \Gamma T_m \right|_{\Leb} < \varepsilon$.

	\end{lem}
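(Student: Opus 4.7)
The plan is to reduce the statement to the observation that pairwise disjoint measurable subsets of the unit disk $\mathbb D$ have total Lebesgue measure bounded by $\pi$, so that a convergent series argument applies.

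First I would note that $\Gamma T_m = \pi^{-1}(\Cyl_m)$ up to a Lebesgue-null set. Indeed, since $\omega$ is a fundamental domain for the $\Gamma$-action, the translates $\{\gamma\,\omega\}_{\gamma \in \Gamma}$ tile $\mathbb D$ up to boundary, and $T_m = \pi^{-1}(\Cyl_m)\cap \omega$ is the portion of the preimage sitting in $\omega$; applying all $\gamma \in \Gamma$ to $T_m$ recovers the full preimage $\pi^{-1}(\Cyl_m)$. In particular, $\Gamma T_m$ is a countable union of measurable sets and hence measurable.

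Next, since the closures $\Cl(\Cyl_m)$ are pairwise disjoint in $R$ by hypothesis, their preimages $\pi^{-1}(\Cyl_m)$ are pairwise disjoint subsets of $\mathbb D$, and therefore so are the sets $\Gamma T_m$. As all of them lie in $\mathbb D$, which has finite Lebesgue measure $|\mathbb D|_{\Leb} = \pi$, countable additivity gives
\[
    \sum_{m=0}^{\infty} |\Gamma T_m|_{\Leb} \;=\; \Bigl|\bigsqcup_{m \geq 0} \Gamma T_m\Bigr|_{\Leb} \;\leq\; \pi \;<\; \infty.
\]
In particular the tail of this nonnegative convergent series tends to $0$, so for any $\varepsilon > 0$ we can choose $N$ so large that
\[
    \Bigl|\bigcup_{m > N}\Gamma T_m\Bigr|_{\Leb} \;\leq\; \sum_{m > N} |\Gamma T_m|_{\Leb} \;<\; \varepsilon,
\]
which is the required estimate.

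The argument contains no real obstacle; the only subtlety is the first identification $\Gamma T_m = \pi^{-1}(\Cyl_m)$ modulo a null set, and the fact that the disjointness of the $\Cyl_m$ in $R$ lifts to the disjointness of $\pi^{-1}(\Cyl_m)$ in $\mathbb D$. Both are immediate from the definitions once one keeps in mind that $\pi$ is a covering map and the fundamental-domain boundaries have measure zero. No use is made of the auxiliary sequence $(p_j)$ in this lemma; it will presumably enter in subsequent constructions.
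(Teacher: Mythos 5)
Your proof is correct and is essentially the paper's own argument: the paper's one-line proof simply observes that $\Gamma\bigl(\bigcup_m T_m\bigr) \subset \Gamma\omega = \mathbb D$ and leaves the disjointness and tail-of-a-convergent-series step implicit, which you have written out in full.
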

	
	\begin{proof}

	 	Since $\mathbb D = \Gamma \omega \supset \Gamma \left( \bigcup_{m \in \mathbb N} T_m \right)$,
		one easily sees the Lemma.

	\end{proof}

First, consider the following as the quasiconformal deformation of $C(H)$. In more detail, considering the quasiconformal deformation given by \eqref{model quasi def} in subsection \ref{def of a family of cylinder}, we define 
%
	\[
		\nu_j : = \bel ( \psi^{(j)}), 
	\]
for each $j \in \mathbb N$. Then, from Lemma \ref { Area is small enough }, there exists a natural number $M(j)$ such that 
	\[
		\sigma \left( \Gamma T_{M(j)} \right) \cdot e^{2 K_j }< p_j, 
	\]
where 
	\[
		K _j : = \frac {1 + \|( \eta_{M(j)} )_\ast \nu_j\|_{L^\infty}} {1 - \|( \eta_{M(j)} )_\ast \nu_j\|_{L^\infty}}.
	\] 
This is defined so that the product of the maximum dilatation of the quasiconformal deformation and the area of the support in the universal covering is less than $p_j$ when the deformation that repeats stretch deformation $j$ times is defined on the $M(j)$-th cylinder. 
Now, let $\mu$ be defined by 
	\begin{equation}\label{def:david beltrami coe}
	\begin{split}
		\tilde \mu ( z ) & : =
		\begin{cases}
			(\eta_{M(j)})_\ast \nu_j ( \pi(z) ) & \text{if} \ z \in T_{M(j)} \\
			0 & \text{if} \ z \in R \setminus \bigcup_{j \in \mathbb N} \Cl ( T_{M(j)} )
		\end{cases} \\
		& = \sum_{j \in \mathbb N} (\eta_{M(j)})_\ast \nu_j ( \pi(z) ) \chi_{T_{M(j)}}. 
	\end{split}
	\end{equation}
Here $\chi_{T_{M(j)}}$ is the characteristic function of $T_{M(j)}$. 
The function $\tilde \mu$ has exponentially $L^2$ integrable distortion. That is, $\tilde \mu$ satisfies the assumptions of Theorem \ref {principle sol}. Indeed, one can see from the following calculation: 
	\begin{equation} \label{exp. int. of david}
		\int_{\mathbb D} e^{2 K(\tilde \mu)} \leq 
		\sum_{j \in \mathbb N} \sigma \left( \Gamma T_{M(j)} \right) \cdot e^{ 2 K_j }
		< \sum_{j \in \mathbb N} p_j < \infty, 
	\end{equation}
where 
	\[
		K(\tilde \mu) ( z ) = \frac{1 + | \tilde \mu |} {1 - | \tilde \mu |} ( z ).
	\] 
Therefore, if we extend $\tilde \mu$ to be identically zero in $\hat {\mathbb C} \setminus \Cl ( \mathbb D )$,
the Beltrami equation for $\tilde \mu$ has the principal solution $f_{\tilde \mu}$. 
Moreover, pushing $\mu$ of $\tilde \mu$ on $R$ under the covering map $\mathbb D \to R$ induced the homeomorphism $f_\mu: R \to f_{\mu}(R)$. 
It is denoted by $R_\mu$ its image. 

\subsection{$R_\mu$ is not an interior point in $\Teich ( R )$} \label{subsec: not interior}

It follows immediately from the following theorem that it is not an interior point. 

	\begin{theo} [{\cite[Lemma 3.1]{W}}] \label{wolpart}

	 	Let $f: R \to R$ be a $K$-quasiconformal map 
		and $\gamma$ be a closed curve  on $R$. 
		Denote by $l$ the length of the hyperbolic geodesic in the free homotopy class of $\gamma$ 
		and by $l'$ the length of the hyperbolic geodesic in the free homotopy class of $f(\gamma)$. 
		Then
			\[
				l / K \leq l' \leq lK. 
			\]

	\end{theo}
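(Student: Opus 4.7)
The plan is to reduce the two-sided inequality to the quasi-invariance of the modulus of an annulus under $K$-quasiconformal maps. Writing $R = \mathbb H / \Gamma$, I would first lift $f : R \to R$ to a $K$-quasiconformal self-map $\tilde f : \mathbb H \to \mathbb H$. This lift intertwines the action of $\Gamma$ with itself via the induced automorphism $f_\ast : \Gamma \to \Gamma$; explicitly, $\tilde f \circ g = f_\ast(g) \circ \tilde f$ for every $g \in \Gamma$.

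Next I would pick a hyperbolic element $g \in \Gamma$ representing the free homotopy class of $\gamma$. (If $\gamma$ is peripheral or null-homotopic then $l = l' = 0$ and the inequality is vacuous, so we may assume $g$ is hyperbolic.) The translation length of $g$ along its axis equals $l$. After conjugating by a M\"obius transformation so that $g(z) = e^{l} z$, the substitution $w = \log z$ identifies $\mathbb H / \langle g \rangle$ with the horizontal strip $\{ 0 < \IM w < \pi \}$ modulo $w \sim w + l$, which is biholomorphic to a cylinder of modulus $\pi / l$. The identical computation applied to $g' := f_\ast(g)$ shows that $\mathbb H / \langle g' \rangle$ has modulus $\pi / l'$.

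Since $\tilde f$ conjugates $\langle g \rangle$ to $\langle g' \rangle$, it descends to a well-defined $K$-quasiconformal homeomorphism between these two cylinders. The Gr\"otzsch distortion bound for the modulus of an annulus then yields
\[
	\frac{1}{K} \cdot \frac{\pi}{l} \; \leq \; \frac{\pi}{l'} \; \leq \; K \cdot \frac{\pi}{l},
\]
and rearranging gives $l / K \leq l' \leq K l$, as claimed.

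I do not anticipate a serious obstacle: every ingredient (the $K$-quasiconformal lift, the equality of translation length with geodesic length, the modulus formula $\pi / l$, and the quasiconformal distortion of modulus) is classical. The one piece of bookkeeping will be checking that the descended map between quotient cylinders remains $K$-quasiconformal, but this is automatic because the dilatation of $\tilde f$ is invariant under the deck transformations $\langle g \rangle$ and $\langle g' \rangle$.
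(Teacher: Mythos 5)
Your argument is correct and is essentially the classical proof: the paper itself gives no proof of this statement, citing it directly to Wolpert, and your route (annular cover $\mathbb H/\langle g\rangle$ of modulus $\pi/l$, the equivariant $K$-quasiconformal lift descending to the quotient cylinders, and Gr\"otzsch's quasi-invariance of modulus) is the standard argument underlying Wolpert's Lemma 3.1. No gaps; the peripheral/null-homotopic caveat and the deck-invariance of the dilatation are handled appropriately.
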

	
If $R_\mu$ is an interior point in $\Teich ( R )$, a $K-$quasiconformal map exists from $R$ to $R_\mu$. 
Then, from Theorem \ref {wolpart}, it follows that the length of closed geodesics $f_\mu(\gamma_{M(j)}) ^\ast$ homotopic to $f_\mu(\gamma_{M(j)})$ in $R_\mu$ are greater than $\short(R) / K ( > 0 )$. 
On the other hand, for each $n \in \mathbb N$,
the length of hyperbolic geodesic of the shortest geodesic in $f_{\mu}(\Cyl_{M(n)})$ is less than 	
	\[
		\left(\frac{2} {3}\right)^{n -1} \frac {\pi^2} {H}, 
	\]
from \eqref{prop-eq rat len corecurve} in Proposition \ref {prop: esitimates of inj radius} and Shcwarz--Pick's lemma. 
It means that 
	\[
		\lim_{j \to \infty} \length_{\rho_{R_\mu}} (f_\mu(\gamma_{M(j)}) ^\ast) = 0. 
	\]
It is a contradiction.

\subsection{$R_\mu$ exists in the Bers boundary} \label{subsec: exists bers boundary}

To prove that $R_\mu$ exists in the Bers boundary, we show that the Schwarizian derivative $S(f_{\tilde \mu})$
of the principal solution $f_{\tilde \mu}$ is in the Bers boundary. By the definition of the Bers boundary,
it is sufficient to take a sequence $(\varphi_n)$ in $\mathcal {B}(\Teich ( R ) )$ conversing $\varphi_{\infty}$ in $B(R)$ and 
$\varphi$ coincide $S(f_{\tilde \mu})$. 
On the open set $U: = \bigcup_{j \in \mathbb N} \Cyl_{M ( j )}$, 
Applying $\{ \gamma_{M(j)}^\ast \}$ to the discussion in Subsection \ref {def of a family of cylinder}, 
we can take the deformation of a family of cylinders $( [\tau_j] )_{j}$ in Subsection \ref {def of a family of cylinder}. 
The sequence converges in $\Teich(R) \cup \partial_{\Bers} \Teich(R) $ from (\ref{ineq:sum}). 
Now, we denote $\mathcal {B} ([\tau_j] )$ by $\varphi_j \in B ( R )$, and the limit of $\mathcal {B} ([\tau_j] )$ by $\varphi_\infty$. 
Since $W_{\varphi_j}$ is convergent to $W_{\varphi_{\infty}}$ with respect to the norm in $B (R)$, the sequence is
local uniformly convergent on the lower half plane $\mathbb H^\ast$. 

Moreover, we can prove $\| K(\tau_j - \tilde \mu) \|_{L^1} \overset {j \to \infty}{\longrightarrow} 0$. 
Indeed, for each $j$, a function $\tau_j$ is a Beltrami coefficient of a quasiconformal deformation 
in which $\Cyl_{M(n)}$ is stretched twice $n$  times for each $1 \leq n \leq j$ 
and $\Cyl_{M(j)}$ is stretched twice $n$ times for each  $j > n$.
Indeed, from the definition, $\tau_j$ is coincide to $\tilde \mu$ in $ \bigcup_{1\leq n \leq j} \Cyl_{M (n)}$. 
Thus, the following inequality holds from (\ref {ineq:sum}) in Subsection \ref{def of a family of cylinder}: 
	\[
		\| K ( \tau_n - \tilde \mu ) \|_{L^1(\mathbb D)} \leq 
		\sum_{n \geq j} \sigma \left( \Gamma T_{M(n)} \right) \cdot e^{2 K_n}
		\leq \sum_{n \geq j} p_n \overset {j \to \infty}{\longrightarrow} 0. 
	\]
Therefore, the principle solution of $\tau_{M(n)}$
is locally uniform convergent to $f_{\tilde \mu}$, from Proposition \ref {loc unif convergence}. 
That is, $f_{\tilde \mu}$ and $W_{\varphi_\infty}$ are the limit of the sequence ($W_{\varphi_n}$). 
Hence $f_{\tilde \mu} = W_{\varphi_\infty}$. Therefore $S(f_{\tilde \mu}) = \varphi_\infty$. 
The following conclusions follow from the above: 

\begin{theo} [ ] \label{exi. of sqf}

 	Let $R$ be a Riemann Surface of analytically infinite type with $\short (R) > 0$ and 
	infinitely many homotopically independent essential closed geodesics 
	$\{ \gamma_n^\ast\}_{n \geq 0}$ whose lengths are bounded.  
	Then, there exists a \SQF \ in 
	the Bers boundary of $\Teich(R)$.

\end{theo}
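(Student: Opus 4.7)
The plan is to realize the desired \SQF\ as the Schwarzian $S(f_{\tilde\mu}|_{\mathbb H^\ast})$ of the principal solution to a carefully tuned degenerate Beltrami equation. Because $\short(R) > 0$ and the geodesics $\gamma_n^\ast$ have bounded length, after passing to a subsequence and invoking the collar lemma (Theorem \ref{collar}) I arrange disjoint cylindrical collars $\Cyl_n \ni \gamma_n^\ast$ of common height $H = h(l) > 2\sqrt 2\pi^2$, together with standard uniformizations $\eta_n : C(H) \to \Cyl_n$. Fix a summable positive sequence $(p_j)$. Using Lemma \ref{ Area is small enough }, I choose a rapidly growing subsequence $M(j)$ so that the fundamental-domain area of the lift of $\Cyl_{M(j)}$ times $e^{2K_j}$ is bounded by $p_j$, where $K_j$ is the maximal dilatation of the $j$-fold iterated stretch $\psi^{(j)}$ from \eqref{standard iterated deformation}. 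The Beltrami differential $\tilde\mu$ defined in \eqref{def:david beltrami coe}---equal to $(\eta_{M(j)})_\ast \bel(\psi^{(j)})$ on $\Cyl_{M(j)}$ and zero elsewhere, then lifted to $\mathbb D$ and extended by zero outside---satisfies $\int_{\mathbb D} e^{2K(\tilde\mu)} < \infty$, so by Theorem \ref{principle sol} the associated Beltrami equation admits a principal solution $f_{\tilde\mu}$, and in particular David's condition holds.

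To certify that $\varphi_\infty := S(f_{\tilde\mu}|_{\mathbb H^\ast})$ lies in $\partial_{\Bers}\Teich(R)$, I exhibit a sequence in $\mathcal B(\Teich(R))$ converging to it. The approximants are the quasiconformal Beltrami coefficients $\tau_j$ constructed in Subsection \ref{def of a family of cylinder}, applied this time to the selected family $\{\Cyl_{M(n)}\}$. The telescoping estimate \eqref{ineq:sum}, which combines McMullen's inequality (Theorem \ref{McMullen result}), the height-independence of the infinitesimal Beltrami differential from Claim \ref{infinitesimal Bel differntial for single}, and the geometric ratio \eqref{unif ratio}, ensures that $\sum_j \|\mathcal B([\tau_j]) - \mathcal B([\tau_{j-1}])\|_{B(R)} < \infty$, so the sequence is Cauchy and converges in $B(R)$ to some $\varphi_\infty' \in \Cl(\mathcal B(\Teich(R)))$. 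In parallel, the choice of $M(j)$ forces $\|K(\tau_j - \tilde\mu)\|_{L^1(\mathbb D)} \to 0$ by summability of $(p_n)$; Proposition \ref{loc unif convergence} then gives local uniform convergence of the principal solutions $f_{\tau_j}$ to $f_{\tilde\mu}$. Bers-norm convergence of the $\varphi_j$ implies local uniform convergence of $W_{\varphi_j}$ on $\mathbb H^\ast$, so the two limits coincide and $\varphi_\infty' = \varphi_\infty$.

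Finally, to rule out $\varphi_\infty \in \mathcal B(\Teich(R))$ I invoke Wolpert's quasiconformal length bound (Theorem \ref{wolpart}). By \eqref{prop-eq rat len corecurve} together with Schwarz--Pick applied to the holomorphic embedding of the standard cylinder into $R_\mu$, the shortest geodesic inside $f_\mu(\Cyl_{M(n)})$ has length at most $(2/3)^{n-1}\pi^2/H$, which tends to $0$. If $f_\mu : R \to R_\mu$ were $K$-quasiconformal, these lengths would stay bounded below by $\short(R)/K > 0$, a contradiction; hence $\varphi_\infty$ lies on the Bers boundary. The principal subtlety is the selection of the subsequence $M(j)$: it must be sparse enough that the area-times-dilatation product controls both the David integrability \eqref{exp. int. of david} and the $L^1$-distortion convergence required by Proposition \ref{loc unif convergence}, while the geometric decay \eqref{unif ratio} handles the Bers-norm side automatically. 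Balancing the measure-theoretic summability against the geometric exponential contraction of injectivity radii is the one place where the construction has to be done with care.
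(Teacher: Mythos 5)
Your proposal follows the paper's route essentially verbatim through the body of the construction --- the choice of the sparse subsequence $M(j)$, the David coefficient $\tilde\mu$, the telescoping McMullen estimate for the approximants $\tau_j$, the $L^1$-distortion convergence feeding Proposition \ref{loc unif convergence}, and the Wolpert argument excluding interior points --- and all of that matches Subsections \ref{def of a family of cylinder}--\ref{subsec: exists bers boundary}. The gap is at the one step that the paper's proof of Theorem \ref{exi. of sqf} actually has to supply, namely the reduction from the stated hypothesis to the standing assumption of those subsections. You claim that ``after passing to a subsequence and invoking the collar lemma'' you can arrange disjoint collars of common height $H = h(l) > 2\sqrt 2\pi^2$. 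This fails: the hypothesis gives only that the lengths $l_n$ of the $\gamma_n^\ast$ are bounded above and, since $\short(R) > 0$, bounded below away from $0$; the collar height $h(l_n)$ is therefore bounded above by $h(\short(R))$, which for a generic $R$ is nowhere near $2\sqrt 2\pi^2$ (e.g.\ if all $l_n = 1$ then $h(1) \approx 1.4$, while $2\sqrt 2\pi^2 \approx 27.9$). No subsequence extraction can shrink the geodesics or equalize their lengths. The paper's proof consists precisely of the missing step: one first produces a quasiconformal map $f : R \to f(R)$ under which all the geodesics $f(\gamma_n^\ast)^\ast$ acquire a common length $l$ small enough that $h(l) > 2\sqrt 2\pi^2$; since $[f(R), f] \in \Teich(R)$, this change of base point identifies $\Cl(\mathcal B(\Teich(R)))$ with $\Cl(\mathcal B(\Teich(f(R))))$, so a \SQF\ constructed over $f(R)$ yields one in $\partial_{\Bers}\Teich(R)$. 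Without this normalization the cylinders you feed into Proposition \ref{prop: esitimates of inj radius} need not satisfy the height condition, and the injectivity-radius bound $\leq 1/2$ that drives the McMullen estimate is unavailable.
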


\begin{proof}

 	From the assumption 
	that $R$ has infinitely many homotopically independent  essential closed geodesics 
	$\{ \gamma_n^\ast\}_{n \geq 0}$ whose lengths are bounded, 
	there exists a quasiconformal mapping $f: R \to f(R)$ 
	so that the set $\{ f(\gamma^\ast_n)^\ast \}$ of geodesics homotopic to $f(\gamma^\ast_n)$ satisfies 
	the conditions; whose lengths are constant $l$ and satisfy $h(l) > 2 \sqrt 2 \pi^2$. 
	In addition, $[f(R), f]$ is an element of $\Teich(R)$. 
	From the above, 
	we can apply the discussion so far to $f(R)$ and the set $\{ f(\gamma^\ast_n)^\ast \}$.

\end{proof}

\section{A holomorphic family of \SQF}

The main purpose of this section is to prove the following: 
	
	\begin{theo} [ ] \label {resultball}

	 	Let $R$ be a Riemann Surface of analytically infinite type with $\short (R) > 0$ and 
		infinitely many homotopically independent 
		essential closed geodesics $\{\gamma_n^\ast\}_{n \geq 0}$ whose lengths are bounded. 
		Then, 
		there exists an infinite-dimensional complex manifold in $\partial_{\Bers} \Teich (R)$ 
		which consists of \SQF.  
		That is, 
		there exists an unit ball $D$ of a Banach space 
		and a holomorphic map $F: D \to B(\Gamma)$, 
		where $F(D)$ is contained in the Bers boundary $\partial_{\Bers} \Teich (R)$, contains only \SQF, and 
		the dimension of the tangent space of the image is infinite.

	\end{theo}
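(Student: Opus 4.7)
The idea is to complexify the construction of Section \ref{sectionconstruction} by adding a family of holomorphic quasiconformal perturbations parametrized by the unit ball of $\ell^\infty$. Using the hypothesis, split $\{\gamma_n^\ast\}_{n\geq 0}$ into two infinite disjoint subfamilies $\{\alpha_n^\ast\}$ and $\{\beta_n^\ast\}$; apply the construction of Section \ref{sectionconstruction} to $\{\alpha_n^\ast\}$ alone, obtaining a David coefficient $\tilde\mu$ supported on disjoint lifts $T_{M(j)}$ of certain $\alpha$-cylinders. From the $\beta$-cylinders, choose pairwise disjoint compact subsets $E_n$ and fix Beltrami differentials $\omega_n \in L^\infty(R)$ with $\supp(\omega_n) \subset E_n$ and $\|\omega_n\|_\infty \leq 1/2$, arranged so that the derivatives of the Bers embedding at $\tilde\mu$ in the directions $\omega_n$ are linearly independent in $B(R)$.

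For $\lambda = (\lambda_n)$ in the open unit ball $D$ of $\ell^\infty$, set
$$\mu_\lambda := \tilde\mu + \sum_n \lambda_n \omega_n,$$
extended $\Gamma$-equivariantly to $\mathbb D$ and by $0$ on $\mathbb H^\ast$. Because the supports are pairwise disjoint, $\mu_\lambda$ agrees with $\tilde\mu$ on $\bigcup_j T_{M(j)}$ and satisfies $|\mu_\lambda| \leq 1/2$ elsewhere, so an estimate analogous to \eqref{exp. int. of david} yields exponentially $L^2$-integrable distortion; hence $\mu_\lambda$ is David and admits a principal solution $f_{\mu_\lambda}$ by Theorem \ref{principle sol}. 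Define $F(\lambda) := S(f_{\mu_\lambda}|_{\mathbb H^\ast}) \in B(\Gamma)$.

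The central step is holomorphy of $F$. For each $j$, the truncation $\tau_j^\lambda := \tau_j + \sum_n \lambda_n \omega_n$ lies in $\Bel(\Gamma)$ and depends holomorphically (affinely) on $\lambda$, so $\lambda \mapsto \mathcal B([\tau_j^\lambda])$ is holomorphic by classical Bers theory. Since the stretches producing $\tau_j - \tau_{j-1}$ occur on $\alpha$-cylinders, disjoint from $\supp(\omega_n)$, and since a bounded quasiconformal perturbation changes injectivity radii only by a uniform multiplicative factor, the injectivity-radius inequalities of Proposition \ref{prop: esitimates of inj radius} and Claim \ref{compare surface with model for infinite} pass through $\sum \lambda_n \omega_n$ with controlled loss. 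Consequently the telescoping estimate \eqref{ineq:sum} holds for $\tau_j^\lambda$ with constants uniform in $\lambda \in D$, so $\bigl(\mathcal B([\tau_j^\lambda])\bigr)_j$ is Cauchy in $B(R)$ converging uniformly on $D$. Proposition \ref{loc unif convergence} identifies the limit with $F(\lambda)$, and a uniform limit of holomorphic Banach-valued maps is holomorphic, so $F$ is holomorphic.

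Finally, for each $\lambda$, the short-geodesic argument of Subsection \ref{subsec: not interior} gives $\length_{\rho_{R_{\mu_\lambda}}}(f_{\mu_\lambda}(\gamma_{M(j)}^\ast)^\ast) \to 0$ (Theorem \ref{wolpart} absorbs the bounded quasiconformal perturbation), ruling out $F(\lambda) \in \Teich(R)$; the Cauchy sequence $\mathcal B([\tau_j^\lambda])$ witnesses $F(\lambda) \in \partial_{\Bers}\Teich(R)$. Because $f_{\mu_\lambda}$ is a David-map homeomorphism of $\hat{\mathbb C}$, $F(\lambda)$ is a \SQF. The linear independence of the derivatives $dF|_0(\omega_n)$, arranged at the outset, gives the infinite-dimensional tangent space. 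The main obstacle will be the uniform-in-$\lambda$ convergence in the holomorphy step: one must ensure that the McMullen-based estimate \eqref{ineq:sum} is not degraded by the holomorphic perturbation $\sum \lambda_n \omega_n$, which rests on the disjointness of the two cylinder families and on Schwarz--Pick-type pullback of the injectivity radius under the perturbation.
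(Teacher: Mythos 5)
Your overall architecture matches the paper's: you perturb the David construction by quasiconformal deformations supported away from the degenerating cylinders, use uniformity of the McMullen/telescoping estimate to keep the perturbed points on the Bers boundary, and rule out interior points with the short-geodesic argument. The genuine gap is in the final step, which is the actual content of the theorem beyond Theorem \ref{exi. of sqf}: you posit directions $\omega_n$ ``arranged so that the derivatives of the Bers embedding at $\tilde\mu$ in the directions $\omega_n$ are linearly independent,'' but this is precisely what has to be proved, and it is not a matter of arrangement. The kernel of the tangent map in directions supported on $R_1$ (the complement of the perturbation cylinders) is $N(R_1,R)=\{\nu : \int\nu\varphi=0 \text{ for all }\varphi\in Q(R_1,R)\}$, and a priori this kernel could swallow every differential supported on your compacta $E_n$ (for instance, if the restrictions to $R_1$ of integrable holomorphic quadratic differentials on $R$ failed to separate them, or if $Q(R_1,R)$ were finite-dimensional). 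The paper's proof of infinite-dimensionality goes through the duality $L^\infty(R_1)/N(R_1,R)\cong Q^\ast(R_1,R)$ and then shows that $Q(R_1,R)$ is an infinite-dimensional Banach space by proving the a priori estimate $\|\Phi\|_{L^1(R)}\le C_q\|\Phi\|_{L^1(R_1)}$ for all $\Phi\in Q(R)$ (Proposition \ref{genepudding}, via Lemma \ref{PuddingsLemma} applied to the uniform collars, which is where $\short(R)>0$ enters again). Without some such quantitative statement, your linear-independence claim is unsupported.

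A second, related omission: your derivative $dF|_0(\omega_n)$ is taken at the David point $\tilde\mu$, where the base solution $f_{\tilde\mu}$ is not quasiconformal, so classical Bers theory does not directly give differentiability, let alone the integral representation of the derivative, at that point. The paper handles this by factoring the map as $F^\mu\circ P^0$ and justifying the change of variables in the derivative formula for $F^\mu$ using $f_{\tilde\mu}\in W^{1,2}_{\loc}$ (Claim \ref{Bers embedding cal. for david}); injectivity of $d_0F^\mu$ then reduces everything to the tangent map $d_0P^0$ at the Fuchsian origin. You would need an analogous argument before you can even speak of the $dF|_0(\omega_n)$. The rest of your proposal --- holomorphy via uniform convergence of the holomorphic maps $\lambda\mapsto\mathcal B([\tau_j^\lambda])$, and uniformity of the injectivity-radius estimates because the supports are disjoint and the perturbation is uniformly quasiconformal --- is sound and close in spirit to the paper's treatment, with your $\ell^\infty$-ball playing the role of the paper's $\Bel(R_1)$.
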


\begin{proof}

To construct the infinite-dimensional complex manifold in Theorem \ref {resultball}, we use partial deformation on $R$. 
By applying the same arguments as in the proof of Theorem \ref {exi. of sqf}, we may assume that the lengths of the geodesics $\{\gamma_n^\ast\}_{n \geq 0}$ are constant. Also, we can assume there exists an infinite number of geodesics that do not intersect each other with the $\gamma_n^\ast$. For example, replace $\{\gamma_n^\ast\}_{n \geq 0}$ with $\{\gamma_{2n}^\ast\}_{n \geq 0}$. 
As each $n$, we consider the subcylinder $\widetilde {C_n}$ of the collar $\gamma_n^\ast$, 
whose center is $\gamma_n^\ast$ and height is $h(l) / 2$. Since the lengths are constant, their modulus are constant.
Let $R_2$ be the union of the tubular neighborhoods $\{\widetilde {C_n}\}$ of $\{ \gamma_n^\ast \}_{n \in \mathbb N}$. 
Appalling $\{\gamma_{n}^\ast\}_{n \geq 0}$ to the discussion in Subsection \ref {sqfconstruction}, 
we take the Beltrami differential $\mu$ which induces a David--Fuchsian b-group. From the construction of $\mu$, 
the support of $\mu$ is contained in $R_2$
Then, the complex structure of $R_1: = R \setminus R_2$ can still be deformed. Moreover, the deformation space of $R_1$
is still an infinite dimension space,
since there exists an infinite number of geodesics that do not intersect each other with the $\gamma_n^\ast$.
By deforming the complex structure on $R_1$, we construct a map such that Theorem \ref {resultball} is satisfied. 
	\begin{equation} \label{def: partial deforme}
		P^\mu: \Bel ( R_1 ) \ni \tau \mapsto S ( f_ {(\mu + \tau)} ) \in B( R ).
	\end{equation}
Here, $\mu+\tau$ is precisely $\mu \chi_{R_2}+\tau \chi_{R_1}$. These differentials satisfy the David condition. 
Indeed, let $\pi: \mathbb \mathbb D \to R$ be an universal cover, and $\tilde \mu$ and $\tilde \tau$ be lifts of $\mu$ and $\tau$, respectivily. Set
	
	\[
		K (z) : = \frac{1 + | \tilde \mu + \tilde \tau |} {1 - | \tilde \mu + \tilde \tau |}. 
	\]
We get 
	\begin{align*}
		\int_{\mathbb D} e^{2 K(z)} &\leq \left( \int_{\supp_(\tilde \mu)} + \int_{\supp( \tilde \tau)} \right) e^{2 K(z)} \\
		&\leq  \int_{\supp(\tilde \mu)} e^{2 K_\mu (z)} + \int_{\supp( \tilde \tau)} e^{2 K_\tau (z)}
		< \infty. 
	\end{align*}
However, since the supports of $\mu$ and $\tau$ are already included in $R_2$ and $R_1$, respectively, the characteristic functions are omitted hereafter. 
The image of $P^\mu$ is almost evident that the image of this map is contained in the Bers boundary. 
Indeed, we consider the sequence $\{ f_{\tau_j + \tau} \}$ with the sequence $f_{\tau_j}$ converging to $\mu$ in Subsection  \ref {sqfconstruction}. 
Therefore, to complete the proof of Theorem \ref {resultball}, we must consider that the image of this map is infinite-dimensional. 
To show that the image is infinite-dimensional, we decompose the above map \eqref {def: partial deforme}
into the composition of the two maps, 
 a translation $F^\mu$ and a partial deformation $P^0$. 
	\begin{align} 
		& P^0 : \Bel ( R_1 ) \ni \tau \mapsto S ( f_{ 0 + \tau } ) \in B(R), \label{partial deforme null} \\
		& F^\mu: ( B(R) \supset ) \ \Teich ( R ) \ni S(f_\tau) \mapsto S ( f_{\mu +\tau} ) \in B(R). \label{trans. by david}
	\end{align}
The computation of the dimension of the image of $F^\mu$ can be seen from the construction of $\mu$. In fact, From the construction \eqref{def:david beltrami coe}, the Beltrami differential $\mu$ satisfies the exponential integrability condition \eqref{exp. int. of david}, so 
 $f_{\mu + \tau}$ is in $W_{\loc}^{1,2}(R)$ from Theorem \ref {principle sol}. Therefore, the tangent map at the origin is represented by 
	\begin{equation} \label{eq:deriofbersembedd}
		d_0 F ^\mu [\nu] ( z ) = - \frac {6} {\pi} ( f_{\mu + 0} )_ z ^2  ( z ) 
		\int_{R} \frac {\nu ( \zeta ) } {(f_{\mu + 0} ( z ) - f_{\mu + 0} ( \zeta ) )^4}
		 ( f_{\mu + 0} ) _ z ^2  ( \zeta )
	\end{equation}
in the same manner as the calculation for Bers embedding. 
For detailed calculations, see Claim \ref{Bers embedding cal. for david} given later. 
Briefly stated, since $f_{\mu + 0}$ belongs to $W_{\loc}^{1,2}(\mathbb C )$, the coordinate transformation by the function $f_{\mu + 0} $ can be done by multiplying the Jacobian of $f_{\mu+0}$ by the function under integration. 
From equation \eqref{eq:deriofbersembedd}, 
we know that the image of $F^\mu$ is infinite-dimensional. 
Therefore, it is sufficient to show that the image of $P^0$ is infinite-dimensional. 
In Subsection \ref {tangentmap}, we discuss the partial deformation $P^0$ and compute the tangent map of partial deformation. 

\end{proof}

\begin{claim} \label{Bers embedding cal. for david}

 	Let $R$ be a Riemann surface, $R_2$ its sub-surface (but not necessarily connected), 
	and $R_1 : = R \setminus R_2$. 
	Let $\mu$ be the Beltrami differential whose support is contained in $R_2$, 
	and the Beltrami equation has the homeomorphic solution $f_\mu$. 
	Suppose the homeomorphic solution is an element of $W^{1,2}_{\loc}(R)$. Then
	the map defined in \eqref{trans. by david} 
	\[
		F^\mu: ( B(R) \supset ) \ \Teich ( R ) \ni S(f_\tau) \mapsto S ( f_{\mu +\tau} ) \in B(R)
	\]
	is differentiable, and its derivative at $0 \in \Bel(R_1)$ is the formula \eqref{eq:deriofbersembedd}:  
	\begin{equation*} 
		d_0 F ^\mu [\nu] ( z ) = - \frac {6} {\pi} ( f_{\mu + 0} )_ z ^2  ( z ) 
		\int_{R} \frac {\nu ( \zeta ) } {(f_{\mu + 0} ( z ) - f_{\mu + 0} ( \zeta ) )^4}
		 ( f_{\mu + 0} ) _ z ^2  ( \zeta ). 
	\end{equation*}

\end{claim}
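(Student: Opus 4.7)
The plan is to reduce the calculation to the classical differential of the Bers embedding at the origin, exploiting the fact that $f_\mu$ is conformal off of $R_2$. For $\tau \in \Bel(R_1)$ of small norm, set $g_\tau := f_{\mu+\tau} \circ f_\mu^{-1}$. Because $\supp(\mu)\subset R_2$ and $\supp(\tau)\subset R_1$ are disjoint and $\bar\partial f_\mu = \mu\,\partial f_\mu$ vanishes on $R_1$, the map $f_\mu$ is holomorphic on $R_1$, and the standard composition rule for Beltrami coefficients collapses to
\[
\mu_{g_\tau}(w) = \tau\!\bigl(f_\mu^{-1}(w)\bigr)\,\frac{f_\mu'\!\bigl(f_\mu^{-1}(w)\bigr)}{\overline{f_\mu'\!\bigl(f_\mu^{-1}(w)\bigr)}}
\]
on $f_\mu(R_1)$, and $\mu_{g_\tau}=0$ elsewhere. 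In particular $\|\mu_{g_\tau}\|_\infty = \|\tau\|_\infty$, so $g_\tau$ is genuinely quasiconformal (not merely David) for $\|\tau\|_\infty < 1$.

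Next, apply the composition formula for Schwarzian derivatives, $S(g_\tau\circ f_\mu) = (S(g_\tau)\circ f_\mu)(f_\mu')^2 + S(f_\mu)$, which is valid on (the lift of) $R_1$ where $f_\mu$ is conformal. Since $g_0=\id$ and $S(f_\mu)=F^\mu(0)$, differentiation at $\tau=0$ yields
\[
d_0 F^\mu[\nu](z) = (f_\mu'(z))^2\,(d_0 S)[\dot\sigma]\bigl(f_\mu(z)\bigr),
\]
where $\dot\sigma(w) = \nu(f_\mu^{-1}(w))\,(f_\mu'/\overline{f_\mu'})(f_\mu^{-1}(w))$ is the infinitesimal Beltrami differential for the family $g_\tau$ corresponding to direction $\nu$. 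The classical Bers formula applied to the quasiconformal family $g_\tau$ provides both differentiability and the explicit expression
\[
(d_0 S)[\dot\sigma](w) = -\frac{6}{\pi}\int \frac{\dot\sigma(\xi)}{(w-\xi)^4}\,dA(\xi).
\]
Substituting $\xi = f_\mu(\zeta)$ on the support of $\dot\sigma$ (contained in $f_\mu(R_1)$), where the Jacobian of $f_\mu$ equals $|f_\mu'(\zeta)|^2$ and $(f_\mu'/\overline{f_\mu'})\cdot|f_\mu'|^2 = (f_\mu')^2$, then yields the claimed formula.

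The main obstacle is justifying these manipulations when $f_\mu$ is only a David homeomorphism rather than a quasiconformal one. Specifically, one must establish (i) the a.e.\ composition rule for Beltrami coefficients used above, which requires a.e.\ differentiability of both $f_\mu$ and $f_\mu^{-1}$ (guaranteed for David maps of exponentially integrable distortion by Theorem \ref{principle sol}), and (ii) the change of variables in the final integral, which uses the Sobolev area formula for $W^{1,2}_{\loc}$ homeomorphisms (Lusin's condition (N) together with a.e.\ differentiability). This is precisely the role of the standing hypothesis $f_\mu\in W^{1,2}_{\loc}(R)$. Once these technical points are in place, the remaining argument reduces to the classical quasiconformal Bers calculus applied to the genuinely quasiconformal family $g_\tau$, for which the differential at the origin is a standard computation.
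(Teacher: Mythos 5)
Your proposal follows essentially the same route as the paper's proof: factor $f_{\mu+\tau}=g_\tau\circ f_\mu$, observe that $g_\tau$ is genuinely quasiconformal because the supports of $\mu$ and $\tau$ are disjoint (so the composition formula collapses and $\|\mu_{g_\tau}\|_\infty=\|\tau\|_\infty$), apply the classical derivative-of-the-Bers-embedding computation to $g_\tau$, use the Schwarzian chain rule, and perform the change of variables justified by $f_\mu\in W^{1,2}_{\loc}$. One small correction: the Schwarzian chain rule is needed on the reflected domain $\mathbb{H}^\ast$, where the Bers embedding is evaluated and where $f_\mu$ is conformal because the extended coefficient $\tilde\mu$ vanishes there, rather than on the lift of $R_1$ as you state.
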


\begin{proof}

	The following calculations are well known when $\mu$ is in the $\Bel(\Gamma)$ (cf. \cite[Theorem6.11]{IT}).
 
 	In order to simplify the symbols, we will use the notation $f_\mu$ instead of $f_{\mu+0}$. 
	For the sake of proof, everything is lifted by the universal covering 
	$\pi: \mathbb D \to R$ with $\Gamma$ in the Fuchsian group model. 
	The lifts of $\mu$, $f_\mu$ $R_1$ and $R_1$ by the projection $\pi$ 
	are written as $\tilde \mu$, $\tilde f_{\mu}$, $\tilde R_1$ and $\tilde R_2$, respectively.
	
	Let $\tilde \nu$ be an element in $L^\infty(\Gamma)$ and its support be contained in $\tilde R_1$.
	To show the claim, 
	it is sufficient to show that the limit 
		\[
			\lim_{t \to 0} \frac{1} {t} ( F^{\mu} (f_{t \tilde \nu}) - F^{\mu} (f_{0}) )
		\]
	exists and that the extremal limit is given by \eqref{trans. by david}, where $f_{t \tilde \nu}$ and $f_{0}$ are
	the normal solution of the Beltrami equation of $t \tilde \nu$ and $0$ ,respectively. 
	We define $\tilde\mu_t : = \tilde \mu + t \tilde \nu$. Since $\supp(\tilde\mu) \cap \supp(\tilde\nu) = \emptyset$, 
	$\tilde \mu_t$ satisfies the David condition, 
	so we denote the normal homeomorphism solution of the Beltrami equation of $\tilde \mu_t$ by $\tilde f_{t}$. 
	Moreover, set $g_t : = \tilde f_{t} \circ \tilde f_{\mu}^{-1}$, then we get
		\[
			\lambda_t = 
			\frac {( \tilde f_{\mu} )_z} {\overline { (\tilde f_{\mu} )_z }} 
			\frac{\tilde\mu_t - \tilde\mu} {1 - \bar{\tilde\mu} \tilde\mu_t} \circ \tilde f_{\mu}^{-1} =
			\begin{cases}
				\frac {( \tilde f_{\mu} )_z} {\overline { (\tilde f_{\mu} )_z }} \cdot
				\frac{t \tilde\nu}{1 - t\tilde\nu} \circ \tilde f_{\mu}^{-1} & \ \text{on} \  \tilde f_{\mu} ( \tilde R_1 ) \\
				0 & \ \text{on} \ \hat {\mathbb C} \setminus \tilde f_{\mu} ( \tilde R_1 ), 
			\end{cases}
		\]
	so, the norm of $\mu_t$ is less than $1$, for sufficietntly small. Therefore $g_t$ is quasiconformal with respect to
	the complex structure of $\tilde f_{\mu} ( \hat {\mathbb C } )$. 
	Apply $g_t$ to the standard argument (cf. \cite[Theorem6.10]{IT}).
	we can show that the following limit exists:  
		\[
			\lim_{t \to 0} \frac{1} {t} S ( g_t|_{\tilde f_{\mu}} ( \mathbb D^\ast ) ). 
		\]
	Moreover, we get the represent formula: 
		\[
			\lim_{t \to 0} \frac{1} {t} S ( g_t|_{\tilde f_{\mu}} ( \mathbb D^\ast ) ) ( \zeta)
			= \int_{\tilde f_{\mu}( \mathbb D )} \frac{\lambda(w)} {( w - \tilde f_{\mu}(z) ) ^4}, 
		\]
	where 
		
		\[
			\lambda(w) = \frac {( \tilde f_{\mu} )_z} {\overline { (\tilde f_{\mu} )_z }} \cdot
			\frac{\tilde\nu}{1 -|\mu|} \circ \tilde f_{\mu}^{-1}. 
		\]
	
	That is, we get $\lambda_t = t\lambda + \delta(t)$ satisfies that $\| \delta(t) \|_{L^\infty} \to 0$ as $t \to 0$.
	From the chain rule of Schwarizian derivatives, we get
		\[
			F^{\mu} (f_{t \tilde \nu} ) = S ( g_t|_{\tilde f_{\mu}} ( \mathbb D^\ast ) ) ( \tilde f_{\mu} )_z '' + F^\mu(f_0), 
		\]
	so we obtain
		\[
			\lim_{t \to 0} \frac{1} {t} ( F^{\mu} (f_{t \tilde \nu}) - F^{\mu} (f_{0}) )
			= ( \tilde f_{\mu} )_z '' \int_{\tilde f_{\mu}( \mathbb D )} \frac{\lambda(w)} {( w - \tilde f_{\mu}(z) ) ^4}. 
		\]

	Finally, 
	by substituting $f(\zeta)$ for $w$ in the above integral, we obtain the representation formula \eqref{trans. by david}.
	Note that from the assumption that $\tilde f_{\mu}$ is an element of $W^{1,2}_{\loc}$, 
	substitution by $\tilde f_{\mu}$ can be computed using the Jacobian of $\tilde f_{\mu}$, and that the Jacobian of $f$ is
		\[
			| ( \tilde f_{\mu} )_{z}| ^2 - |(\tilde f_{\mu})_{\bar z} ) |^2. 
		\]

\end{proof}

\subsection{The derivative of a partial deformation} \label{tangentmap}

Next, we will compute the dimension of the image of
the tangent map $d_0 P^0$ of $\Bel( R_1 ) \ni \tau \mapsto S (f_{0 + \tau} ) \in \Cl ( \Teich (R) )$ at $0$. 
Because the right side of the above equation \eqref{eq:deriofbersembedd} is pushed by $f_\mu$, 
let us begin by calculating the dimension of the image of $d_0 P^0$: 
	\[
		d_0 P^0 : T_0 ( \Bel ( R_1 ) ) \to T_{[0]} ( \Teich (R) ). 
	\]
As the kernel of $d_0P^0$ is expressed by
	\[
		N ( R_1, R ) : = \left\{ \nu \in L^\infty (R_1) \middle | \int \nu \varphi = 0 \ ( \forall \varphi \in Q(R_1, R ) ) \right\}, 
	\]
where 
	\[
		Q(R_1, R ) : = \{ \varphi \in Q(R_1) \mid \exists \Phi \in Q(R) \ \mathrm{s.t.} \ \Phi|_{R_1} = \varphi \}. 
	\]
Therefore, we only need to compute the dimension of 
	\begin{equation} \label{sp:wewant}
		T_0 ( \Bel ( R_1 ) ) / N ( R_1, R ) = L^\infty ( R_1 ) / N ( R_1, R ). 
	\end{equation}
To compute the dimension of the above space \ref{sp:wewant}, we use the well-known fact: 

\begin{theo} [{\cite[Theorem 7.5]{IT}}]

 	The mapping $\Lambda : \Bel(R_1) \to Q^\ast(R_1)$ given by $\Lambda_\mu$: 
		\[
			\Lambda_\mu : Q(R_1) \ni \varphi \mapsto \int \mu \varphi \in \mathbb C
		\]
	induces an isomorphism of $L^\infty ( R_1 ) / N ( R_1)$ onto $Q^\ast ( R_1 )$, where 
		\[
			N ( R_1) : = \left\{ \mu \in L^\infty(R_1) \middle| \int \varphi \mu = 0 \ ( \forall \varphi \in Q(R_1) ) \right\}. 
		\]

\end{theo}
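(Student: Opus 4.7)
The plan is to recognise the statement as a textbook instance of Hahn--Banach duality together with the duality $L^1(R_1)^* = L^\infty(R_1)$, with $Q(R_1)$ playing the role of a closed subspace of $L^1(R_1)$ (the Banach space of measurable integrable quadratic differentials on $R_1$). The pairing $(\mu,\varphi)\mapsto \int_{R_1}\mu\varphi$ is well defined because $\mu$ is a $(-1,1)$-form and $\varphi$ a $(2,0)$-form, so $\mu\varphi$ is an area element integrable whenever $\mu\in L^\infty(R_1)$ and $\varphi\in L^1(R_1)$. From the estimate $|\Lambda_\mu(\varphi)|\leq \|\mu\|_\infty\,\|\varphi\|_1$ we read off that $\Lambda_\mu\in Q^*(R_1)$ with $\|\Lambda_\mu\|\leq \|\mu\|_\infty$, so $\Lambda$ is a bounded linear map $L^\infty(R_1)\to Q^*(R_1)$.

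By definition the kernel of $\Lambda$ is exactly $N(R_1)$, hence $\Lambda$ descends to an injective bounded linear map
\[
\bar\Lambda\colon L^\infty(R_1)/N(R_1)\longrightarrow Q^*(R_1).
\]
The non-trivial content is surjectivity. I would proceed as follows. First, verify that $Q(R_1)$ embeds isometrically as a closed linear subspace of $L^1(R_1)$: closedness reduces, via a locally finite cover of $R_1$ by relatively compact coordinate charts, to the standard Bergman-type pointwise bound $|\varphi(p)|\leq C_{K}\|\varphi\|_{L^1(K')}$ for $p$ in a compact set $K$ contained in a larger compact $K'$, which forces $L^1$-Cauchy sequences of holomorphic quadratic differentials to converge locally uniformly, so the limit is again holomorphic. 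Second, given $\ell\in Q^*(R_1)$, apply Hahn--Banach to extend $\ell$ to a continuous linear functional $\tilde\ell$ on all of $L^1(R_1)$ with $\|\tilde\ell\|=\|\ell\|$. By the canonical duality $L^1(R_1)^* = L^\infty(R_1)$, there exists $\mu\in L^\infty(R_1)$ with $\|\mu\|_\infty = \|\tilde\ell\|$ such that $\tilde\ell(f)=\int_{R_1}\mu f$ for every $f\in L^1(R_1)$. Restricting to $Q(R_1)$ gives $\ell=\Lambda_\mu$, which proves surjectivity. The same $\mu$ realises the quotient norm of $[\mu]$, and combined with $\|\Lambda_\mu\|\leq \|\mu\|_\infty$ this shows that $\bar\Lambda$ is in fact an isometric isomorphism.

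The main obstacle is the closedness of $Q(R_1)$ inside $L^1(R_1)$ when $R_1$ is of infinite analytic type, a subtlety not present in the classical finite-type setting. Once one has the local Bergman estimate uniformly on compact sets, the diagonal argument produces a holomorphic limit, and a dominated-convergence argument identifies it with the $L^1$-limit, after which the duality machine runs automatically. The remainder of the argument is formal Banach-space duality, with no input from complex analysis beyond the holomorphy of the limit.
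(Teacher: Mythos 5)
Your proposal is correct, and it is essentially the argument behind the statement as it appears in the literature: the paper itself gives no proof here, it simply quotes the result from \cite[Theorem 7.5]{IT}, whose proof is exactly your scheme --- boundedness of the pairing $|\Lambda_\mu(\varphi)|\leq\|\mu\|_\infty\|\varphi\|_1$, identification of $\ker\Lambda$ with $N(R_1)$, and surjectivity via Hahn--Banach extension to $L^1(R_1)$ followed by the duality $L^1(R_1)^*=L^\infty(R_1)$, yielding an isometric isomorphism of the quotient onto $Q^*(R_1)$.

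One clarification: the step you single out as ``the main obstacle,'' the closedness of $Q(R_1)$ in $L^1(R_1)$, is not actually needed anywhere in this argument. Hahn--Banach extends a bounded functional from an arbitrary (not necessarily closed) subspace with the same norm, and the kernel computation and the descent to the quotient are purely algebraic; so the Bergman-estimate/diagonal argument, while correct and a true fact about infinite-type surfaces, is superfluous for this theorem. (It does become relevant elsewhere in the paper, e.g.\ where $Q(R_1,R)$ is required to be a Banach space, but not for the duality statement under review.)
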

Note that $N ( R_1, R)$ is a closed subspace in $N ( R_1)$, and $Q(R_1, R)$ is a closed subspace in $Q(R_1)$. 
Hence, there exists the natural surjection $\iota^\ast$ from $Q^\ast (R_1)$ to $Q^\ast(R_1, R)$. 
As above, from the universal property of the cokernel and the Hahn--Banach theorem, 
we get the following isomorphism in the sense of linear spaces: 
	\[\large
		\xymatrix{
			N ( R_1, R) / N ( R_1)
			\ar@{^{(}->}[r]
			& L^\infty (R_1) / N ( R_1) 
			\ar[d]^{ { \large \Lambda }}_{\text{\large {\rotatebox{90}{$\cong$} }} } 
			\ar[r]^-{\mathrm{coker}}
			& L^\infty(R_1) / N ( R_1, R)
			\ar[d]^{\exists{ \large \tilde \Lambda}}_{ \text{ \large \rotatebox{90}{$\cong$}}} \\
			\quad 
			& Q^\ast(R_1) \ar[r]^-{ { \large\iota^\ast} } 
			& Q^\ast(R_1, R) \ar@{}[lu]|{{\large \circlearrowright}}
		}
	\]
Hence, to prove that the image of $P^0$ is infinite--dimensional, we show that $\dim Q(R_1, R ) = \infty$ 
and $Q(R_1, R )$ becames a Banach space.  
That the dimension of  $Q(R_1, R )$ is infinity follows immediately from the fact that 
each $\varphi \in Q(R_1, R)$ has the unique extension of $\varphi$ to $R$. That is the following map is linear isomorphisms: 
	\[
		\eta : Q (R) \ni \Phi \mapsto \Phi |_{R_1} \in Q ( R_1, R )
	\]
Moreover, from the open mapping theorem in Functional analysis, 
we only prove that the inverse of $\eta$ is a bounded linear operator. 

\begin{prop} \label{genepudding}

 	If $R$ satisfies $\short ( R ) > 0$, then
	there is a constant $C_q$ depending on $R$ such that, 
		\[
			(\| \Phi \|_{L^1(R_1)} = ) \int_{R} | \Phi |
			\leq C_q(R) \int_{R_1} | \Phi | , 
		\]   
	for all integrable quadratic differentials $\Phi$ on $R$.

\end{prop}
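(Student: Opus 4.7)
The plan is a local-to-global argument: I first prove a local inequality on a single standard annulus, then patch together via the Collar Lemma (Theorem~\ref{collar}).

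\medskip

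\emph{Local estimate.} Each collar $A_{h(l)}(\gamma_n^*)$ is biholomorphic to the annulus $A(H)=\{e^{-H}<|w|<e^H\}$ with $H=h(l)$, carrying the subcylinder $\widetilde{C_n}$ onto the middle sub-annulus $\{e^{-H/2}<|w|<e^{H/2}\}$. The heart of the argument is to prove that there is a constant $C(H)$ such that
\begin{equation*}
\int_{\{e^{-H/2}<|w|<e^{H/2}\}}|\Phi|\,dA \;\le\; C(H)\int_{\{e^{-H}<|w|<e^{-H/2}\}\cup\{e^{H/2}<|w|<e^H\}}|\Phi|\,dA
\end{equation*}
for every integrable holomorphic quadratic differential $\Phi(w)\,dw^2$ on $A(H)$. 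I split $\Phi=\Phi^++\Phi^-$ via the Laurent decomposition on $A(H)$: $\Phi^+$ extends holomorphically to $\{|w|<e^H\}$ and $\Phi^-$ extends to $\{|w|>e^{-H}\}$. For $|w|\le e^{H/2}$ and any $R\in(e^{3H/4},e^H)$, Cauchy's formula over the outer circle $|z|=R$ gives the pointwise bound $|\Phi^+(w)|\le\frac{R}{2\pi(R-e^{H/2})}\int_0^{2\pi}|\Phi(Re^{i\theta})|\,d\theta$. Integrating in $w$ over $\{|w|\le e^{H/2}\}$ and then averaging in $R$ over $(e^{3H/4},e^H)$ converts the single-circle integral into an honest area integral over $\{e^{3H/4}<|w|<e^H\}$; a symmetric argument via $w\mapsto 1/w$ handles $\Phi^-$ and the outer sub-annulus $\{e^{-H}<|w|<e^{-3H/4}\}$. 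Adding the two bounds and using $|\Phi|\le|\Phi^+|+|\Phi^-|$ on the middle sub-annulus yields the local inequality.

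\medskip

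\emph{Patching and conclusion.} The Collar Lemma guarantees the collars $A_{h(l)}(\gamma_n^*)$ are pairwise disjoint in $R$, so the annular regions $A_{h(l)}(\gamma_n^*)\setminus\widetilde{C_n}$ are pairwise disjoint subsets of $R_1$. Pulling back the local estimate to each collar and summing over $n$ yields $\int_{R_2}|\Phi|\le C(H)\int_{R_1}|\Phi|$, and adding $\int_{R_1}|\Phi|$ to both sides gives the proposition with $C_q(R)=1+C(H)$. The constant stays finite because $\short(R)>0$ combined with the standing assumption that the lengths of the $\gamma_n^*$ are bounded above confines $H=h(l)$ to a compact subinterval of $(0,\infty)$, on which $C(H)$ is uniformly bounded by inspection of its rational-exponential form.

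\medskip

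\emph{Main obstacle.} The delicate point is the local estimate on $A(H)$. A direct subharmonicity or mean-value argument fails because the Laurent projections $\Phi^\pm$ are not pointwise dominated by $\Phi$, so one cannot simply split the target integral and apply standard Bergman/disk estimates. The remedy is to use Cauchy's formula from a single outer circle (which automatically extracts $\Phi^+$) and then average over the radius of that circle in order to promote a bound involving the $L^1$ norm of $\Phi$ on a single circle---which by itself cannot be controlled by the outer area integral---into a genuine area estimate on the outer sub-annulus.
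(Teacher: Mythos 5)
Your proof is correct and follows essentially the same route as the paper: the local annulus estimate you describe is precisely the paper's Lemma \ref{PuddingsLemma}, proved there by the Cauchy integral over the two boundary circles of a sub-annulus followed by averaging over the radii to turn circle integrals into area integrals, and the patching step via the disjointness of the collars and summation over $n$ is identical. Your explicit Laurent splitting $\Phi=\Phi^{+}+\Phi^{-}$ is just the two-circle Cauchy representation written out term by term, so it is the same decomposition in different notation.
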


The following lemma is a crucial complement to the proof of Proposition \ref {genepudding}. 

\begin{lem}[Puddings Lemma] \label{PuddingsLemma}

	Let $\mathbb A = \{ 1 < | z | < R \}$ and $\tilde { \mathbb A } = \{ r_1 < | z | < r_2 \}$ 
	with $1 < r_1 < r_2 < R$. Then, there exists a constant $C_a = C ( r_1, r_2, R ) > 0$ such that
		\[
			\| f \|_{L^1(\tilde {\mathbb A} )} \leq C_a \| f \|_{L^1(\mathbb A \setminus \tilde {\mathbb A} )}, 
		\] 
	for each a holomorphic function $f : \mathbb A \to \mathbb C$.

\end{lem}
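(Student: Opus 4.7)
The inequality is a mean-value-type estimate for holomorphic functions on an annulus: the $L^1$-mass on an interior sub-annulus $\tilde{\mathbb A}$ is controlled by the $L^1$-mass on the two complementary collars. The natural route is to obtain a pointwise bound on $|f(z)|$ for $z \in \tilde{\mathbb A}$ in terms of mass on circles lying in $\mathbb A\setminus \tilde{\mathbb A}$ via the annular Cauchy integral formula, and then average over the free radii.

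\textbf{First step.} Fix auxiliary radii $r_1' \in (1, r_1)$ and $r_2' \in (r_2, R)$. For every $z \in \tilde{\mathbb A}$ the Cauchy integral formula on the annulus $\{r_1' < |\zeta| < r_2'\}$ gives
\begin{equation*}
f(z) = \frac{1}{2\pi i}\oint_{|\zeta|=r_2'}\frac{f(\zeta)}{\zeta-z}\,d\zeta - \frac{1}{2\pi i}\oint_{|\zeta|=r_1'}\frac{f(\zeta)}{\zeta-z}\,d\zeta.
\end{equation*}
Since $|\zeta-z|\geq r_2'-r_2$ on the outer contour and $|\zeta-z|\geq r_1-r_1'$ on the inner one, this produces the pointwise estimate
\begin{equation*}
|f(z)| \leq \frac{r_2'}{2\pi(r_2'-r_2)}\int_0^{2\pi}|f(r_2' e^{i\theta})|\,d\theta + \frac{r_1'}{2\pi(r_1-r_1')}\int_0^{2\pi}|f(r_1' e^{i\theta})|\,d\theta.
\end{equation*}

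\textbf{Second step.} Restrict the auxiliary radii to $r_2' \in I_2 := (\tfrac{r_2+R}{2}, R)$ and $r_1' \in I_1 := (1, \tfrac{1+r_1}{2})$, so that both $r_2'-r_2$ and $r_1-r_1'$ remain bounded below by positive constants depending only on $r_1, r_2, R$. Averaging the inequality of the first step in $r_2'$ over $I_2$ and in $r_1'$ over $I_1$, and converting $r\,dr\,d\theta$ back to area measure via Fubini, one obtains a uniform bound
\begin{equation*}
|f(z)| \leq C(r_1,r_2,R)\,\|f\|_{L^1(\mathbb A\setminus \tilde{\mathbb A})} \qquad (z \in \tilde{\mathbb A}),
\end{equation*}
because the two collars $\{\tfrac{r_2+R}{2}<|z|<R\}$ and $\{1<|z|<\tfrac{1+r_1}{2}\}$ produced by the averaging both lie in $\mathbb A\setminus \tilde{\mathbb A}$. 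Integrating this pointwise inequality over $z \in \tilde{\mathbb A}$ yields the conclusion with $C_a := C(r_1,r_2,R)\,|\tilde{\mathbb A}|$.

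\textbf{Main obstacle.} There is no serious obstacle. The only delicate point is choosing the averaging intervals $I_1, I_2$ so that the denominators $r_2'-r_2$ and $r_1-r_1'$ remain uniformly bounded below --- this is why one averages over sub-intervals at a fixed positive distance from the boundary of $\tilde{\mathbb A}$ rather than over the full intervals $(1,r_1)$ and $(r_2,R)$ --- and then to keep track of the Jacobian $r\,dr\,d\theta$ when converting between $L^1$-norms on single circles and $L^1$-norms on the annular collars.
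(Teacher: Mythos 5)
Your proposal is correct and follows essentially the same route as the paper: the annular Cauchy integral formula gives a pointwise bound on $|f(z)|$ for $z\in\tilde{\mathbb A}$ in terms of integrals over two free circles in the collars, and averaging over the free radii converts those circle integrals into the $L^1$-norm on $\mathbb A\setminus\tilde{\mathbb A}$ before integrating over $\tilde{\mathbb A}$. Your choice to average the inner radius over $\bigl(1,\tfrac{1+r_1}{2}\bigr)$, keeping the denominator $r-r_1'$ uniformly bounded below, is in fact slightly cleaner than the paper's estimate of $\tfrac{1}{r-\beta}$ by $\tfrac{1}{r-r_0}$ for $\beta\in(r_0,r_1)$, which as written has the inequality in the wrong direction and needs exactly the repair you build in.
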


\begin{proof}

 	Let $1 < r_0 < r_1$ and $ r_2 < r_3 < R$. Then, we get, for each 
	$z = r e^{ i \theta }$ in $\mathbb D_{r_2} \setminus \Cl ( \mathbb D_{r_1} )$, 
		\begin{eqnarray*}
			f ( z ) & = & 
			\frac {1} {2 \pi i} 
			\int_{\partial \mathbb D_{\alpha} + \partial \mathbb D_{\beta} }
			\frac {f(\zeta)} {\zeta - z} \dzeta, 
		\end{eqnarray*}
	where $r_0 < \beta < r_1, r_3 < \alpha < R$. 
	Hence, 	
		\begin{align*} 
			| f(z) | & \leq \frac {1} {2 \pi} \int_0 ^{2 \pi} \alpha 
			\frac {| f(\alpha e^{i \eta}) | } {| \alpha e^{i \eta}- r e^{i \theta} | }  d\eta 
			+ 
			\frac {1} {2 \pi} \int_0 ^{2 \pi} \beta
			\frac {| f(\beta e^{i \eta}) | } {| \beta e^{i \eta}- r e^{i \theta} | } d\eta, 
			\notag \\ 
			& \leq 
			\frac {1} {2 \pi} \frac {1} {\alpha - r} \int_0^{2 \pi} \alpha | f(\alpha e^{i \eta}) | d\eta
			+
			\frac {1} {2 \pi} \frac {1} {r -\beta} \int_0^{2 \pi} \beta | f(\beta e^{i \eta}) | d\eta. 
		\end{align*}
	Next, both sides are multiplied by $r$ and integrated. 
		\begin{align} \label{eq:pudding1}
			\int_{r_1}^{r_2} | f(z) | r \dr
			& \leq
			\frac {1} {2 \pi} \int_{r_1}^{r_2} \frac {r} {\alpha - r} \dr \int_0^{2 \pi} \alpha | f(\alpha e^{i \eta}) | d\eta
			+ 
			\frac {1} {2 \pi} \int_{r_1}^{r_2} \frac {r} {r -\beta} \dr \int_0^{2 \pi} \beta | f(\beta e^{i \eta}) | d\eta, \notag \\
			&\leq
			\frac {1} {2 \pi} \int_{r_1}^{r_2} \frac {r} {r_3 - r} \dr \int_0^{2 \pi} \alpha | f(\alpha e^{i \eta}) | d\eta
			+ 
			\frac {1} {2 \pi} \int_{r_1}^{r_2} \frac {r} {r - r_0} \dr \int_0^{2 \pi} \beta | f(\beta e^{i \eta}) | d\eta, \notag \\
			&\leq
			\frac {C(r_1, r_2, r_3)} {2 \pi}  \int_0^{2 \pi} \alpha | f(\alpha e^{i \eta}) | d\eta
			+ 
			\frac {C(r_0, r_1, r_2) } {2 \pi}  \int_0^{2 \pi} \beta | f(\beta e^{i \eta}) | d\eta, 
		\end{align}
	where 
		\[
			C(r_1, r_2, r_3) : = \int_{r_1}^{r_2} \frac {r} {r_3 - r} \dr, \ \ \ 
			C(r_0, r_1, r_2) : =  \int_{r_1}^{r_2} \frac {r} {r - r_0} \dr. 
		\]
	Third, integrate (\ref{eq:pudding1}) from 0 to 2$\pi$ for $\theta$. 
		\begin{equation} \label{eq:pudding2}
			\| f \|_{L^1(\tilde {\mathbb A} )} \leq 
			C(r_1, r_2, r_3) \int_0^{2 \pi} \alpha | f(\alpha e^{i \eta}) | d\eta
			+ 
			C(r_0, r_1, r_2) \int_0^{2 \pi} \beta | f(\beta e^{i \eta}) | d\eta. 
		\end{equation}
	Finally, integrate (\ref{eq:pudding2}) 
	from $r_3$ to $R$ with respect to $\alpha$ and from $r_0$ to $r_1$ with respect to $\beta$, we get
		\[
			\| f \|_{L^1(\tilde {\mathbb A} )} \leq 
			\left( C(r_1, r_2, r_3) ( r_1 -r_0 ) + C(r_0, r_1, r_2) (R - r_3 ) \right) 
			\| f \|_{L^1(\mathbb A \setminus \tilde {\mathbb A} )}. 
		\]

	To make a constant $C$ that depends on $1, r_1, r_2,$ and $R$, we can take $r_0$ and $r_3$ as, for example, 
	$\frac{1+r_1}{2}$ and $\frac{r_2 + R}{2}$, respectively. Further calculations lead to
		\begin{align*}
			& C(r_1, r_2, r_3) ( r_1 -r_0 ) + C(r_0, r_1, r_2) (R - r_3 )  \\
			& = -\frac{r_1 - 1}{2} \cdot \frac {r_2 + R} {2} \log \frac {r_3 - r_2} {r_3 - r_1} 
			+ \frac {R-r_2} {2} \cdot \frac {1 + r_1} {2} \log \frac {r_2 - r_0} {r_1 - r_0} \\
			& \leq R^2 \log \frac {2r_2 - 1 - r_1} {2r_1 - 1 - r_1} 
			\leq R^2 \log \frac {2r_2 } {r_1 - 1}. 
		\end{align*}
	Therefore, $C_a : = R^2 \log \frac {2r_2 } {r_1 - 1}$.

\end{proof}

\begin{Rem}
 
 	The constant $C$ in the above Proposition \ref {PuddingsLemma} depends 
	on the modulus of the annulus $\mathbb A$ and the annulus $\tilde {\mathbb A}$ and their relative positions. 
	For example, if the core curves of $\mathbb A$  and $\tilde { \mathbb A }$ coincide, 
	we know it depends on the modulus ratio. 
 
\end{Rem}

\begin{proof}[proof of Propotion \ref{genepudding}]

 	For each $m$, let $\Cyl_m$ be the collar of $\gamma_m^\ast$, then $R_1$ contains $\Cyl_m \setminus \widetilde C_m$. 
	From Proposition  \ref {PuddingsLemma}, 
	by using local coordinates if necessary, there exists a constant $C_a$ such that 
		\[
			\| \Phi \|_{L^1(\widetilde {C_j})} \leq C_a \| \Phi \|_{L^1(C_j \setminus \widetilde {C_j})}, 
		\]
	for each $j \in \mathbb N$. Therefore, we get
		\begin{align*}
			\int_R | \Phi | & = \left( \int_{R_1} + \int_{R_2} \right) | \Phi | 
			= \int_{R_1} | \Phi | + \sum_{j} \int_{\widetilde{C_j}} | \Phi |, \\
			& \leq  \int_{R_1} | \Phi | + C_a \sum_{j} \int_{\Cyl_j \setminus \widetilde{C_j}} | \Phi | 
			= \int_{R_1} | \Phi | + C_a \int_{\bigcup \Cyl_j \setminus \widetilde{C_j}} | \Phi |, \\
			& \leq \int_{R_1} | \Phi | + C_a \int_{R_1} | \Phi | = ( C_a + 1 ) \int_{R_1} | \Phi | . 
		\end{align*}
	Thus, we define $C_q$ as $C_a+1$.

\end{proof}

Hence, the following theorem is proved. 
	
	\begin{theo} [ ]

	 	Let $R$ be a Riemann Surface of analytically infinite type with $\short (R) > 0$ and 
		infinitely many essential closed geodesics $\{\gamma_n^\ast\}_{n \geq 0}$ whose lengths are bounded. 
		Then, the following operator

			\[	
				P^0 : \Bel ( R_1 ) \ni \tau \mapsto S ( f_{ 0 + \tau } ) \in B(R)
			\]
		has an infinite-dimensional image, where $R_1$ and $R_2$ are defined above.

	\end{theo}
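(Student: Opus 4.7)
The plan is to compute the dimension of the image of the differential $d_0 P^0$ at the origin, which gives a lower bound on the infinitesimal dimension of the image of $P^0$ at $P^0(0)$. A standard identification reduces this to showing that the quotient $L^\infty(R_1)/\ker(d_0 P^0)$ is infinite-dimensional.

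First I would identify $\ker(d_0 P^0)$ with the annihilator $N(R_1,R)$ inside $L^\infty(R_1)$ of the space $Q(R_1,R)$ of integrable holomorphic quadratic differentials on $R_1$ that extend holomorphically to all of $R$. By Hahn--Banach duality applied to the natural pairing $(\varphi,\mu)\mapsto\int\varphi\mu$, the quotient $L^\infty(R_1)/N(R_1,R)$ is linearly isomorphic to $Q^{\ast}(R_1,R)$. Thus it suffices to prove $\dim Q(R_1,R)=\infty$.

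Next I would study the restriction map $\eta:Q(R)\to Q(R_1,R)$, $\Phi\mapsto\Phi|_{R_1}$. This is linear and surjective by the definition of $Q(R_1,R)$, and injective by the identity theorem, since any extension $\Phi\in Q(R)$ vanishing on the open set $R_1$ must vanish on all of $R$. To transfer the infinite-dimensionality of $Q(R)$, which holds because $R$ is of analytically infinite type and has an infinite family of disjoint closed geodesics $\{\gamma_n^\ast\}$, it suffices by the open mapping theorem to show that $\eta^{-1}$ is a bounded operator between Banach spaces, i.e.\ that there exists a constant $C_q>0$ depending only on $R$ such that $\|\Phi\|_{L^1(R)}\leq C_q\|\Phi\|_{L^1(R_1)}$ for every $\Phi\in Q(R)$. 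This is precisely Proposition \ref{genepudding}.

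The main obstacle is establishing this uniform $L^1$-bound. I would control, for each $n$, the $L^1$-mass of $\Phi$ on the inner subcylinder $\widetilde{C_n}\subset R_2$ by its mass on the surrounding collar piece $\Cyl_n\setminus\widetilde{C_n}\subset R_1$ via a Cauchy-integral argument on an annulus: expressing values of the holomorphic $\Phi$ on the inner sub-annulus as contour integrals over circles sitting inside the outer collar, taking absolute values, and integrating in $r$ and $\theta$ to obtain the annular $L^1$-to-$L^1$ estimate (Lemma \ref{PuddingsLemma}). Summing these local estimates over the countably many pairwise disjoint collars, and using that the constant is \emph{uniform} because all the $\gamma_n^\ast$ share a common length (hence all annuli share the same modulus, after passing to a quasiconformally equivalent model as in Theorem \ref{exi. of sqf}), yields the global inequality. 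Combined with the duality step, this forces $\dim Q(R_1,R)=\infty$, hence the image of $d_0 P^0$ is infinite-dimensional, and therefore so is the image of $P^0$.
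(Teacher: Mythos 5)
Your proposal follows essentially the same route as the paper: identify the kernel of $d_0P^0$ with the annihilator $N(R_1,R)$, pass to $Q^\ast(R_1,R)$ by duality, reduce to $\dim Q(R_1,R)=\infty$ via the restriction map $\eta$ and the open mapping theorem, and establish the required bound $\|\Phi\|_{L^1(R)}\leq C_q\|\Phi\|_{L^1(R_1)}$ by the annular Cauchy-integral estimate of Lemma \ref{PuddingsLemma} summed over the disjoint collars, exactly as in Proposition \ref{genepudding}. The argument is correct and matches the paper's proof.
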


This completes the proof of Theorem \ref{resultball}.

\section{Comparison with previous studies}\label{comp}

\begin{theo} [{\cite[Theorem 3.1]{Yamamoto}}] \label{Yamamoto}

	 	Let $R \cong \mathbb H / \Gamma$
		be a Riemann surface whose Fuchsian group is finitely generated Fuchsian group of the second kind.
		If $\psi \in \partial_{Bers} \Teich (S)$ is not a cusp, then there exists a finitely generated quasi-Fuchsian group
		of the first kind $G$ such that the limit set of $G$ contains the limit set of $\chi_\psi ( \Gamma )$.

	\end{theo}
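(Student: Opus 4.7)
The plan is to reduce the statement to the classical Bers--Maskit dichotomy (Theorem \ref{resultofBersMaskit}) for Fuchsian groups of the first kind. Since $\Gamma$ is finitely generated of the second kind, the Nielsen convex core $N$ of $R = \mathbb H/\Gamma$ is a compact hyperbolic surface with totally geodesic boundary, and its double $\hat R := DN$ is a closed hyperbolic surface; write $\hat R = \mathbb H/\hat\Gamma$ for a finitely generated Fuchsian group $\hat\Gamma$ of the first kind containing $\Gamma$ as a subgroup. The goal is to realize $G$ as a quasi-Fuchsian first-kind deformation of $\hat\Gamma$ whose limit set contains $\Lambda(\chi_\psi(\Gamma))$; the non-cusp hypothesis on $\psi$ is what keeps the relevant deformation of $\hat\Gamma$ inside $\Teich(\hat\Gamma)$.

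First I would approximate $\psi$ by a sequence $\psi_n \in \Teich(\Gamma)$ converging to $\psi$ in $B(\Gamma)$. Using Teichm\"uller equivalence, each $\psi_n$ can be represented by a Beltrami coefficient $\mu_n \in \Bel(\Gamma)$ that vanishes on a fixed collar of $\partial N$. Symmetric extension of $\mu_n$ across the doubling involution then yields a $\hat\Gamma$-equivariant coefficient $\hat\mu_n \in \Bel(\hat\Gamma)$ of the same $L^\infty$-norm, giving $\hat\psi_n \in \Teich(\hat\Gamma)$. Since $\Gamma \subset \hat\Gamma$, each $\chi_{\hat\psi_n}(\hat\Gamma)$ is a finitely generated quasi-Fuchsian group of the first kind containing $\chi_{\psi_n}(\Gamma)$, so $\Lambda\bigl(\chi_{\hat\psi_n}(\hat\Gamma)\bigr) \supset \Lambda\bigl(\chi_{\psi_n}(\Gamma)\bigr)$.

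After passing to a subsequence, the $\hat\psi_n$ accumulate at some $\hat\psi_\infty \in \Cl(\Teich(\hat\Gamma))$, which by Theorem \ref{resultofBersMaskit} must be an interior point, a cusp, or totally degenerate. The non-cusp hypothesis on $\psi$, together with the symmetry of the extension intertwining the two copies of $\Gamma$ inside $\hat\Gamma$, rules out the cusp alternative, and the collar-vanishing of the $\mu_n$ provides a uniform lower bound on the modulus of a separating annulus in $\chi_{\hat\psi_n}(\hat R)$, ruling out total degeneracy. Hence $\hat\psi_\infty \in \Teich(\hat\Gamma)$, and $G := \chi_{\hat\psi_\infty}(\hat\Gamma)$ is the desired finitely generated quasi-Fuchsian first-kind group; since $\chi_{\hat\psi_n}(\gamma) = \chi_{\psi_n}(\gamma) \to \chi_\psi(\gamma)$ for every $\gamma \in \Gamma$, passing algebraic containment to the limit gives $G \supset \chi_\psi(\Gamma)$ and therefore $\Lambda(G) \supset \Lambda(\chi_\psi(\Gamma))$. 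The main obstacle is the collar-vanishing step: producing representatives $\mu_n$ whose supports stay uniformly bounded away from $\partial N$ without losing convergence of $\psi_n$ to $\psi$ in $B(\Gamma)$, which relies on finite generation of $\Gamma$ and compactness of the convex core to push the support of each deformation inward while remaining in the same Teichm\"uller class.
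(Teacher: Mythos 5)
This theorem is quoted from Yamamoto's paper and the present paper offers no proof of it, only a one-sentence indication of Yamamoto's actual idea: one works directly with the boundary group $\chi_\psi(\Gamma)$, whose limit set has gaps coming from the free sides of the second-kind group $\Gamma$, and one closes these gaps by an appropriate combination-type construction to produce the first-kind quasi-Fuchsian group $G$. Your proposal takes a genuinely different route (doubling the convex core and taking a limit in $\Teich(\hat\Gamma)$), and it contains gaps that I do not see how to repair.

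The most serious one is the step ``each $\psi_n$ can be represented by a Beltrami coefficient $\mu_n$ that vanishes on a fixed collar of $\partial N$.'' With the paper's definition of Teichm\"uller equivalence ($w^\mu|_{\mathbb R}=w^\nu|_{\mathbb R}$ on all of $\mathbb R$, including the intervals of discontinuity), $\Teich(\Gamma)$ for a finitely generated group of the second kind is infinite dimensional precisely because the deformation in the funnels --- that is, the boundary values of $w^{\mu_n}$ on the free sides --- is part of the data. Modifying $\mu_n$ outside the convex hull of the limit set changes those boundary values and hence the Teichm\"uller class, so the support of $\mu_n$ cannot be pushed into the core. Equivalently, the symmetric deformations of the double form only a finite-dimensional slice of $\Teich(\Gamma)$, and a general non-cusp boundary point $\psi$ is not approximable from within that slice. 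A second gap is the identity $\chi_{\hat\psi_n}(\gamma)=\chi_{\psi_n}(\gamma)$: these monodromies arise from conjugation by $f_{\hat\mu_n}$ and $f_{\mu_n}$ respectively, which are different maps of $\hat{\mathbb C}$ because $\hat\mu_n\neq\mu_n$ off the convex hull; hence the algebraic convergence $\chi_{\hat\psi_n}(\gamma)\to\chi_\psi(\gamma)$, on which the containment $\chi_\psi(\Gamma)\subset G$ rests, does not follow. Finally, even granting the setup, the hypothesis that $\psi$ is not a cusp only forbids accidental parabolics at elements of $\Gamma$; it says nothing about elements of $\hat\Gamma$ crossing between the two halves of the double or about the boundary curves of $N$, and a lower bound on the modulus of one separating annulus does not prevent the sequence $\hat\psi_n$ from leaving every compact subset of the finite-dimensional space $\Teich(\hat\Gamma)$ (for instance by unbounded twisting), so the conclusion $\hat\psi_\infty\in\Teich(\hat\Gamma)$ is not justified. (There is also the minor point that if $\Gamma$ contains parabolic elements the convex core is not compact and its double is not a closed surface, though this is easily accommodated.)
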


	Theorem \ref{Yamamoto} is close to Theorem \ref {resultball}, 
	but the assumptions of the second kind and finite generation are different.
	In the case of the second kind, the free boundary part creates a gap in the limit set, 
	and by connecting it appropriately, a first-kind finitely generated quasi-Fuchsian group is constructed.

Compare the following assertions about finite type Riemann surfaces and infinite generated Fuchs groups. 
In an earlier work \cite{Th}, the following assertion was considered: 
	\begin{quote}
	 	Let $R$ be a Riemann surface such that the Fuchsian group of $R$ is infinitely generated of the first kind. 
		Then, there exists a 1-dimensional analytic subset in $\partial_{Bers} \Teich ( R )$
		 consisting of Kleinian groups whose limit set is homeomorphic to $\mathbb S^1$. 
		 That is, such Kleinian groups are \SQF s. 
	\end{quote}
The statement is similar to our Theorem \ref{resultball}, 
but in the present article, we have chosen a different way of proof to establish Theorem \ref{resultball}. 
The main reason is that a claim \cite[Propositon 4.4]{Th} on the existence of non-tangential limits for holomorphic maps from 
$\mathbb D $ to $\Teich(R)$, utilized in the earlier work, has a counterexample below. 
Therefore, the author must figure out how to prove the aforementioned statement now. 

\begin{exa}

	Let $f: \mathbb D \to \Teich ( \mathbb D )$ be a holomorphic map into the universal Teichm\"uller space
	defined by $f ( \lambda ) = [ \lambda ]$. Here, $\lambda \in \mathbb D$ is identified with the function on $\mathbb D$
	taking constant value $\lambda$. Then, $f$ does not have a non-tangential limit at any point in $\partial \mathbb D$.

\end{exa}
	
	\begin{proof}

	 	For each $\lambda \in \mathbb D $, the homepmprhism $f_\lambda$ on $\hat {\mathbb C}$ corespond to $\lambda$
		the quasiconformal map $z \mapsto z + \lambda \overline{z}$ for $z \in \mathbb D$ and
		$z \mapsto z + \lambda / z$ for $z \in \mathbb C \setminus \Cl ( \mathbb D )$. 
		Hence its schwarizian derivative $S ( f_\lambda )$ is 
			\[
				 \frac {-6 \lambda} {(z^2 - \lambda ) ^2}. 
			\]
	 	We get 
		\begin{align*}
			\| S(f_\lambda) - S( f_1 )\|_{B(\mathbb D )} 
			& = \sup_{z \in \hat{\mathbb C} \setminus \Cl( \mathbb D )} | ( S ( f_\lambda ) - S ( f_1 ) ) ( | z |^2 - 1 ) ^2 | \\
			& = \sup_{z \in \hat{\mathbb C} \setminus \Cl ( \mathbb D )} 
			\left| \left(  \frac {-6 \lambda} {(z^2 - \lambda ) ^2} -  \frac {-6} {(z^2 - 1) ^2} \right) 
			( | z |^2 - 1 ) ^2 \right| \\
			& = 6 \sup_{z \in \hat{\mathbb C} \setminus \Cl ( \mathbb D )} 
			\left| \left(  \frac {\lambda} {(z^2 - \lambda ) ^2} -  \frac {1} {(z^2 - 1) ^2} \right) 
			( | z |^2 - 1 ) ^2 \right| \\
			& = 6 \sup_{z \in \hat{\mathbb C} \setminus \Cl ( \mathbb D )} 
			\left| \left(  \frac {\lambda(z^2 - 1) ^2 - (z^2 - \lambda) ^2} {(z^2 - \lambda ) ^2(z^2 - 1) ^2} \right) 
			( | z |^2 - 1 ) ^2 \right| \\
			& = 6 \sup_{z \in \hat{\mathbb C} \setminus \Cl ( \mathbb D )} 
			\left| \left(  \frac {( \lambda -1 ) ( z^4 - \lambda )} {(z^2 - \lambda ) ^2(z^2 - 1) ^2} \right) 
			( | z |^2 - 1 ) ^2 \right| \\
			& = 6 | \lambda -1 | \sup_{z \in \hat{\mathbb C} \setminus \Cl ( \mathbb D )} 
			\left| \frac { z^4 - \lambda } {(z^2 - \lambda ) ^2} \right| 
			\left| \frac { ( | z |^2 - 1 ) ^2 } { (z^2 - 1) ^2 } \right| \\
			& \geq 6 | \lambda -1 | \sup_{x > 1} 
			\left| \frac {x^4 - \lambda } {(x^2 - \lambda ) ^2} \right|. 
		\end{align*}
	Moreover, $h_\lambda ( x ) : = \frac {x^4 - \lambda } {(x^2 - \lambda ) ^2}$ ($\lambda \in [ 0 ,1 ), x \in ( 1 , \infty )$)			is bounded below by $6$. Indeed, since
		\begin{align*}
			h_\lambda ' ( x ) 
			& = \frac {4x^3(x^2 - \lambda)^2 - 4x(x^4 -\lambda )(x^2 - \lambda )} {(x^2 - \lambda )^4} \\
			& = \frac {4\lambda x ( 1 - x^2 )} {(x^2 - \lambda )^3}, 
		\end{align*}
		 $h_\lambda$ has extrema at $x = 0 , \pm 1$. From $h' ( x ) < 0\ ( \forall x > 1 )$, it follows that 
		  $h_\lambda$ has maximum value at $x = 1$, hence
			\[
				\sup_{x > 1} | h_\lambda ( x ) |
				\geq 6 | \lambda -1 | \sup_{x > 1}  = | h_\lambda ( 1 ) | = \left | \frac {1} {1 - \lambda} \right|. 
			\]
		Summing up the above, 
			\[
				\| S(f_\lambda) - S( f_1 ) \|_{B(\mathbb D )} \geq 6 | \lambda -1 | \sup_{x > 1}
				 \left| \frac {x^4 - \lambda } {(x^2 - \lambda ) ^2} \right| = 6. 
			\]
		Finally, foe each $e^{i \theta} \in \partial \mathbb D$,  composing the rotation
		$z \mapsto  e^{- i \theta} z$ with  $f_{re^{i \theta }}$, we prove that
			\[
				\| S( f_{re^{i \theta }} ) -  S ( f_{e ^{i \theta}} ) \|_{B(\mathbb D )} \geq 6
			\]
		in the same manner.

	\end{proof}



\begin{thebibliography}{Tani}

\bibitem[Abi]{Abi}
W.\ Abikoff,\ On boundaries of Teichm\"uller spaces and on Kleinian groups III,
Acta Math., \ \textbf{134} (1975), 211--234. 

\bibitem[AIM]{AIM} 
Kari Astala, Tadeusz Iwaniec, and Gaven Martin,
\textit{ Elliptic Partial Differential Equations and Quasiconformal Mappings in the Plane},
Princeton University Press, 2009.

\bibitem[Ahl]{QC}  Lars V. Ahlfors with appendix by C.\ J.\ Earle, I.\ Kra, M.\ Shishikura and J.\ H.\ Hubbard,
\textit{Lectures on Quasiconformal Mappings} Second Edition (American Mathematical Society, 2006)

\bibitem[Ber]{B}
L. \ Bers, 
On boundaries of Teichm\"uller spaces and on Kleinian groups I,\ 
Ann. of Math. \textbf{91} (1970), 570--600.


\bibitem[Dav]{D} G. David, Solutions de l'equation de Beltrami avec $\| \mu \| = 1$, Acta Math. \textbf{173}(1994) 37--60.

%


\bibitem[Hub]{H}
J.\ H.\ Hubbard, 
\textit{Teichm\"uller Theory and Applications to Geometry, Topology, and Dynamics}. 
Vol. 1. 
Matrix Editions, Ithaca, NY, 
2006. 

\bibitem[IT]{IT} 
Y.\ Imayoshi and M.\ Taniguchi, \textit{An introduction to Teichm\"uller spaces}, Springer -- Verlag, Tokyo, 1992.

\bibitem[Mas]{Mas}
B. \ Maskit, 
On boundaries of Teichm\"uller spaces and on Kleinian groups II,
Ann. of Math. \textbf{91} (1970), 607--639.


\bibitem[McM]{McM} C. McMullen, Cusps are dense, Ann of Math. (2) \textbf{133} (1991), no. 1, 217--247.

\bibitem[Shi1]{Shi1}
H.\ Shiga, 
On analytic and geometric properties of Teichm\"uller spaces,
J.\ Math.\ Kyoto Univ.\ \textbf{24} (1984),\ 441--452.

\bibitem[Shi2]{Shi2}
H.\ Shiga, 
Characterization of quasidisks and Teichm\"uller spaces,
T\^ohoku Math.\ J.\ \textbf{37} (1985),\ 541-552.

\bibitem[Tan1]{Th}
H.\ Tanigawa,
Holomorphic families of geodesic discs in infinite dimensional Teichm\"uller spaces, 
Nagoya Math. J. \textbf{127} (1992), 117--128. 

\bibitem[Tan2]{Th2}
H.\ Tanigawa,
Holomorphic mappings into Teichm\"uller spaces,
Proc.\ Aemr.\ Math.\ Soc.\ \textbf{117} (1993), 71--78. 


\bibitem[Yam]{Yamamoto} H.\ Yamamoto, 
Boundaries of the Teichm\"uller spaces of finitely generated Fuchsian groups of the second kind,
J.\ Math.\ Soc.\ Japan, Vol.\ \textbf{35}(1983), No.\ 2,  307--321.

\bibitem[Wol]{W} S.\ A.\ Wolpert, The length spectra as moduli for compact Riemann Surfaces, Ann. of Math. (2), 
\textbf{109} (1979), 323--351.

\end{thebibliography}
\end{document}